

\documentclass[leqno, letterpaper, 11pt]{article}
     
    \usepackage{amsmath,amsfonts,amssymb,amsthm}
      
    \usepackage{fullpage} 
    \usepackage{graphicx}
	 
	\usepackage{mathrsfs}
	\usepackage{hyperref}
	\usepackage{cleveref}
	\usepackage{float}
	\usepackage{epstopdf}
	\usepackage{tikz-cd}
	\usepackage{pgf, tikz}
	\usepackage{thmtools}
	\usepackage{bm}
	\usepackage[numbers]{natbib}
	\usepackage{multicol}
	\usepackage{algpseudocode}
	\setlength{\columnsep}{0.5cm}
	\allowdisplaybreaks
	 
\usepackage{thm-restate}
\usetikzlibrary{arrows, automata}
\usepackage[T1]{fontenc}
\usepackage[font=small,labelfont=bf,tableposition=top]{caption}
 
\DeclareCaptionLabelFormat{andtable}{#1~#2  \&  \tablename~\thetable}

\usepackage{amssymb}
\usepackage{lineno}

\usepackage{graphicx}
\usepackage{subfig}
\usepackage{amssymb}

\usepackage{graphicx,epsfig,setspace}
\usepackage{mathrsfs}
\onehalfspace
\newcommand\Tau{\mathcal{T}}

\usepackage{etoolbox}
\patchcmd{\thebibliography}{\section*{\refname}}{}{}{}

\newcommand{\R}{\mathbb{R}}

\newcommand{\Z}{\mathbb{Z}}
\newcommand{\cN}{\mathcal{N}}
\newcommand{\cO}{\mathcal{O}}

\newcommand{\ord}{\text{\rm ord}}

\newcommand{\Bin}{\text{\rm Binomial}}
\newcommand{\downto}{\searrow}
\newcommand{\teta}{\widetilde{\eta}}
\newcommand{\Cov}{\text{\rm Cov}}
\newcommand\blfootnote[1]{%
  \begingroup
  \renewcommand\thefootnote{}\footnote{#1}%
  \addtocounter{footnote}{-1}%
  \endgroup
}

\newcommand{\txi}{\widetilde{\xi}}
\newcommand{\tPhi}{\widetilde{\Phi}}
\newcommand{\tb}{\widetilde{b}}
\newcommand{\tUpsilon}{\widetilde{\Upsilon}}
\newcommand{\tPsi}{\widetilde{\Psi}}

\newcommand{\barxi}{\overline{\xi}}
\newcommand{\bareta}{\overline{\eta}}
\usepackage[ruled, lined, boxed, commentsnumbered]{algorithm2e}
\usepackage{bbm}

\theoremstyle{plain}
\newtheorem{thm}{Theorem}

\newtheorem{lem}{Lemma}[section]
\newtheorem{cor}[thm]{Corollary}
\newtheorem{prop}[lem]{Proposition}

\theoremstyle{definition}
\newtheorem{defi}[lem]{Definition}
\newtheorem{con}[lem]{Conjecture}
\newtheorem{ques}[lem]{Question}

 \newtheorem*{prob*}{Problem}

\theoremstyle{remark}

\begin{document}

\title{One-dimensional cellular automata with random rules:\\ longest temporal period of a periodic solution}
\author{{\sc Janko Gravner} and {\sc Xiaochen Liu}\\
Department of Mathematics\\University of California\\Davis, CA 95616\\{\tt gravner{@}math.ucdavis.edu, xchliu{@}math.ucdavis.edu}}
\maketitle
\bibliographystyle{plain}
\begin{abstract}
We study one-dimensional cellular automata whose rules are chosen at random from among $r$-neighbor rules with a large number $n$ of states.
Our main focus is the asymptotic behavior, as $n \to \infty$, of the longest temporal period $X_{\sigma,n}$ of a periodic solution with a given spatial period $\sigma$.
We prove,  when $\sigma \le r$, that this random variable is of order $n^{\sigma/2}$, in that $X_{\sigma,n}/n^{\sigma/2}$ converges to a nontrivial distribution.
For the case $\sigma > r$, we present empirical evidence in support of the conjecture that the same result holds.
\end{abstract}

\blfootnote{\emph{Keywords}: Brownian bridge, cellular automaton, periodic 
solution, random rule.}
\blfootnote{AMS MSC 2010:  60K35, 37B15, 68Q80.}

\section{Introduction}\label{section:intro}

In an autonomous dynamical system, a closed trajectory is a temporally periodic solution and obtaining information 
about such trajectories is of fundamental importance in understanding the dynamics \cite{reithmeyer1991periodic}. If the evolving variable is 
a spatial configuration, we may impose additional requirements
on periodic solutions, 
such as spatial periodicity. What sort of periodic solutions does a typical dynamical system have? 
This question is 
perhaps easiest to pose for temporally and spatially discrete local dynamics of a cellular automaton. Indeed, 
if we fix a neighborhood
and a number of states, the number of cellular automata rules is finite, and 
the notion of a {\it random\/} rule straightforward. 
To date, not much seems to be known about properties of random 
cellular automata. The aim of the present paper is to further understanding of temporal periods of their periodic solutions
with a fixed spatial period. 
To this end, the 
particular random quantity we address is the longest temporal 
period, to complement the work in \cite{gl1} on the shortest one. 
 
To introduce our formal set-up, the set of sites is one-dimensional integer lattice $\Z$, and the set of possible states 
at each site is $\Z_n=\{0,1,\ldots,n-1\}$, thus 
a \textbf{spatial configuration} is a function 
$\xi:\Z\to \Z_n$. A \textbf{cellular automaton} (\textbf{CA}) 
produces a \textbf{trajectory}, that is, a sequence $\xi_t$ of configurations, $t \in \mathbb{Z}_+= \{0, 1, 2, \dots \}$, which is determined by the initial 
configuration $\xi_0$ and  the following local and 
deterministic update scheme. 
Fix a finite \textbf{neighborhood} $\cN\subset \Z$. Then 
a \textbf{rule} is a function $f:\Z_n^{\cN}\to \Z_n$ 
that specifies the evolution as follows:
$\xi_{t+1}(x)=f(\xi_t|_{x+\cN})$. In this paper, we fix an $r \ge 2$, and consider one-sided rule with the neighborhood $\mathcal{N} = \{-(r-1), -(r - 2), \dots, -1, 0\}$,  which results in
\begin{equation}\label{evolution}
\xi_{t+1}(x) = f(\xi_t(x-r+1), \dots,  \xi_t(x)), \quad \text{for all } x \in \mathbb{Z}.
\end{equation}
In words, the state at a site at time $t+1$ depends in 
a translation-invariant fashion on the state at the same 
site and its left $r-1$ neighbors at time $t$.
Keeping the convention from \cite{gl1}, we often write 
$f(a_{-r+1}, \dots,  a_0) = b$ as $a_{-r+1}\cdots \underline{a_0} \mapsto b$.

It is convenient to interpret a trajectory as a  \textbf{space-time configuration}, a mapping  $(t, x) \mapsto \xi_t(x)$ from $\mathbb{Z}_+ \times \mathbb{Z}$ to $\mathbb{Z}_n$ that 
is commonly depicted as a two-dimensional grid of painted cells, in which different states are different colors, as in Figure \ref{figure: PS and tile}.
We remark that the one-sided neighborhoods are particularly suitable for studying periodicity and that any two-sided rule can be transformed to a one-sided one by a linear transformation of the space-time configuration \cite{gravner2012robust}.

In this paper, we are interested in
trajectories that exhibit both temporal and spatial periodicity, 
defined as follows.  
Let $L$ be a configuration of length $\sigma$. 
Form the initial configuration $\xi_0$, denoted by $L^\infty$, by appending doubly 
infinitely many $L$'s, by default placed so that the leftmost state of a copy of $L$ is at the origin.
Run a CA rule $f$ starting with $\xi_0 = L^\infty$. 
If at some time $\tau$, $\xi_\tau = \xi_0$, 
we say that we have found a \textbf{periodic solution (PS)} of the CA rule $f$ with \textbf{temporal period} $\tau$ and \textbf{spatial period} $\sigma$.
We assume that $\tau$ and $\sigma$ are minimal, that is, $L^\infty$ does not appear at a time that is smaller than $\tau$ and $L$ cannot be divided into two or more identical words.
We emphasize that this minimality is of central importance 
in our main results and their proofs.
A PS with periods $\tau$ and $\sigma$ is characterized by a \textbf{tile}, which is any rectangle with $\tau$ rows and $\sigma$ columns within its space-time configuration.
We view the tile as a discrete torus filled with states and represent any periodic solution with its corresponding tile. 
We do not distinguish between rotations of a tile and thus identify spatial and temporal translations of a PS.

\begin{figure}[!ht] 
    \centering
    \includegraphics[scale = 0.1,clip]{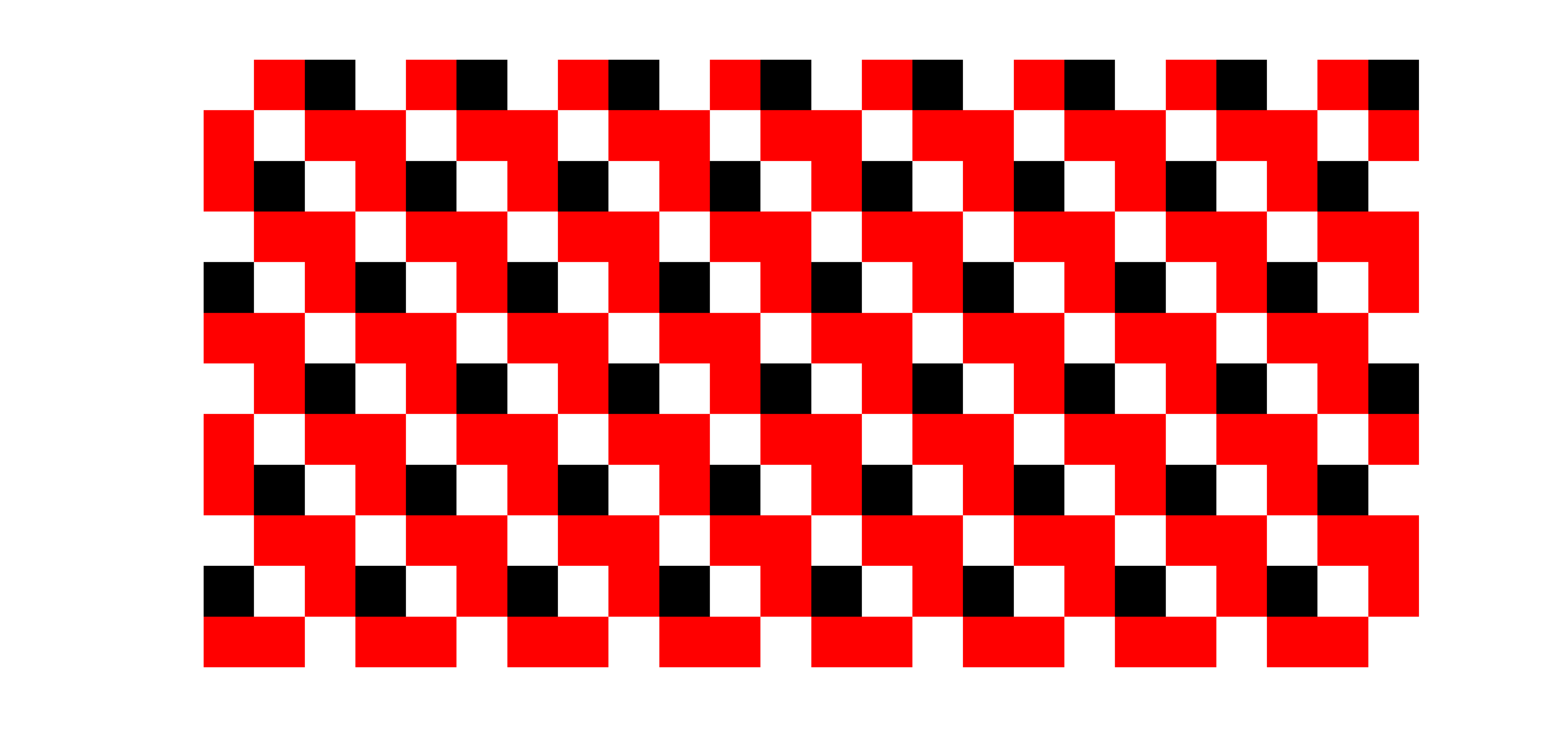}
    \caption{A piece of a PS of a 3-state rule. The states
      0, 1 and 2 are represented by white, red and black cells, respectively.}
    \label{figure: PS and tile}
  \end{figure}
	 
To give an example, Figure \ref{figure: PS and tile} displays a piece of the space-time configuration of a 3-state 2-neighbor rule.
The spatial and temporal axes are oriented horizontally rightward and downward, respectively, as is common in this field. 
This PS is generated by any 2-neighbor rule with $3$ 
states that satisfies 
$2\underline{0}\mapsto 1$, 
$1\underline{2}\mapsto 1$, 
$1\underline{1}\mapsto 1$, 
$1\underline{0}\mapsto 2$,
and
$0\underline{1}\mapsto 0$.
The initial configuration $012^\infty$ re-appears for the 
first time after 
$6$ updates, thus in this case $\tau=6$, $\sigma=3$, and the 
tile (which is, by definition, unique) is 
$$
\begin{matrix}
0 & 1 & 2 \\
1 & 0 & 1 \\
1 & 2 & 0 \\
0 & 1 & 1 \\
2 & 0 & 1 \\
1 & 1 & 0 \\
\end{matrix} \quad .
$$ 

Periodic configurations generated by CA have received some attention in the mathematical literature. The groundwork was 
laid in~\cite{martin1984algebraic}, which extensively studied 
additive CA, but also devoted some attention to non-additive 
ones. An important observation is the link between periodicity in CA and state transition diagrams, which we find useful
in this paper as well. Successors of~\cite{martin1984algebraic} 
include~\cite{jen1, jen2, jen3, wolfram2002new,  xu2009dynamical, kim2009state}. In \cite{boyle2007jointly, boyle1999periodic}, the authors take a dynamical systems 
point of view and explore 
the density of temporally and spatially periodic (which they call jointly periodic) configurations. Our research is also motivated by \cite{gravner2012robust}, where the authors investigate 3-neighbor binary CA and their PS 
that expand into any environment with positive speed.

Long temporal periods generated by CA have been of particular interest because of their applications to random number 
generation \cite{wol-ran,
chang1997maximum, stevens1993transient,
 stevens1999construction, misiurewicz2006iterations, das2010generating}. 
In this paper, we focus on this  aspect of randomly selected rules, a subject which so far remained
unexplored, to our knowledge. For a fixed $n$ and $r$, the 
natural probability space is 
$\Omega_{r, n}$, containing all the $n^{n^r}$ $r$-neighbor rules,  with $\mathbb{P}$ that assigns the uniform probability $\mathbb{P}(\{f\}) = 1/|\Omega_{r, n}|=1/n^{n^r}$ to every $f \in \Omega_{r, n}$. We also fix the spatial period $\sigma$,
 and define the random variable $X_{\sigma,n}$ by letting $X_{\sigma,n}(f)$ be the longest temporal period with spatial period $\sigma$,
for any rule $f \in \Omega_{r, n}$. 
We are interested in the typical size 
of  $X_{\sigma,n}$ when $r$ and $\sigma$ are 
fixed and $n$ is large.
Our main result covers the case $\sigma \le r$.
The case $\sigma > r$ is much harder, but we expect the 
same result to hold; see the discussion in Section 4.
 
\begin{restatable}{thm}{mainTheorem}\label{theorem: main}
Fix a number of neighbors $r$ and a spatial period 
$\sigma\le r$. 
Then $\displaystyle \frac{X_{\sigma, n}}{n^{\sigma/2}}$ converges in distribution, as $n \to \infty$, to a nontrivial limit.
\end{restatable}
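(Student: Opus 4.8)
The plan is to reduce the statement to the asymptotic cycle structure of an induced random map on the finite state space of spatially $\sigma$-periodic configurations, and then to compare that map with a uniformly random mapping. Since a configuration of spatial period $\sigma$ is determined by the $\sigma$-tuple $(\xi(0),\dots,\xi(\sigma-1))$, and since the update rule \eqref{evolution} is translation invariant, running $f$ on such a configuration yields another configuration of spatial period dividing $\sigma$. This defines a map $F:\Z_n^\sigma\to\Z_n^\sigma$ (deterministic once $f$ is fixed), and a periodic solution with spatial period $\sigma$ is exactly a point lying on a cycle of $F$ whose underlying tuple $L$ is \emph{aperiodic}, i.e.\ has minimal spatial period $\sigma$. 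First I would check that for each divisor $d\mid\sigma$ the set of $d$-periodic tuples is forward invariant under $F$; it then follows that every cycle of $F$ consists of tuples sharing one common minimal spatial period. Consequently $X_{\sigma,n}$ equals the length of the longest cycle of $F$ supported on aperiodic tuples, and the whole problem becomes the study of this longest cycle.

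The heart of the argument is to identify the law of $F$ on aperiodic tuples, and here the hypothesis $\sigma\le r$ is decisive: a window of $r\ge\sigma$ consecutive entries of a $\sigma$-periodic sequence contains a full period, hence determines both the periodic pattern and its phase. I would use this to establish two facts. First, the $\sigma$ windows of length $r$ read off a single aperiodic tuple are pairwise distinct, so the $\sigma$ output coordinates $b_i=f(W_i)$ are independent and uniform on $\Z_n$; thus $F$ sends an aperiodic tuple to a uniform random tuple. Second, two tuples that are not cyclic rotations of one another use disjoint sets of windows, so their images are independent. Passing to the quotient by the cyclic group $C_\sigma$ of spatial rotations, under which $F$ is equivariant, these facts show that the induced map $\tilde F$ on the set of aperiodic \emph{necklaces} is --- up to the event that a uniform tuple happens to be spatially periodic, which occurs with probability $\cO(n^{-\sigma/2})$ per step --- a uniformly random mapping on a set of size $M\sim n^{\sigma}/\sigma$, carrying an independent uniform $C_\sigma$-valued ``rotation offset'' on each edge.

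It remains to translate cycle data of $\tilde F$ back into temporal periods and to invoke the asymptotic theory of random mappings. By equivariance, a cycle of $\tilde F$ of length $\ell$ whose rotation offsets sum to $j\in C_\sigma$ lifts to cycles of $F$ of length $\ell\cdot\sigma/\gcd(j,\sigma)$; thus $X_{\sigma,n}=\max_C \ell_C\,\sigma/\gcd(j_C,\sigma)$, where the maximum runs over necklace cycles $C$ and each $j_C$ is asymptotically uniform on $C_\sigma$ and independent of the cycle lengths. Since a uniform random mapping on $M$ points has cyclic points numbering of order $\sqrt M$ and, after normalization by $\sqrt M$, a limiting spectrum of cycle lengths describable through a Brownian bridge, the longest necklace cycle is of order $\sqrt M\sim n^{\sigma/2}/\sqrt\sigma$. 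Combining the limiting joint law of the normalized cycle lengths with the independent uniform offsets would then yield convergence in distribution of $\bigl(\max_C \ell_C\,\sigma/\gcd(j_C,\sigma)\bigr)/n^{\sigma/2}$ to a nontrivial limit, which is the assertion.

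I expect the main obstacle to be the rigorous comparison in the second step. When $\sigma$ is even, the per-step leakage probability $\cO(n^{-\sigma/2})$ of $\tilde F$ into the spatially periodic necklaces is of the same order as the reciprocal of the orbit length scale $\sqrt M$, so along a typical long orbit a number of order one of such leakages occurs; one must therefore show that $\tilde F$ is well approximated by a uniform random mapping subject to killing at this rate, and that the killed model still produces a nontrivial limit for the rescaled longest cycle. A secondary difficulty is the passage through the maximum over cycles: because the temporal period carries the multiplicative factor $\sigma/\gcd(j_C,\sigma)$, the longest temporal period need not arise from the longest necklace cycle, so one must control the joint law of the several longest cycles together with their offsets in order to pin down the limit.
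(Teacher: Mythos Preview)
Your outline is essentially the paper's approach: pass to the quotient by rotations, use $\sigma\le r$ to get independence of outgoing arcs, write the temporal period as cycle length times an expanding factor $\sigma/\gcd(j_C,\sigma)$ with $j_C$ uniform on $C_\sigma$ (in fact exactly uniform, not merely asymptotically, and independently across cycles once the quotient graph is fixed; this is Lemma~\ref{lemma: d-expanded}), and reduce to the cycle structure of a random mapping. Both obstacles you flag are the right ones, and your second one is handled just as you suggest, via the joint limit for the several longest cycles (Theorem~\ref{theorem: joint distribution of random mapping}) combined with the independent expanding factors through a geometric index $K_\sigma$ marking the first cycle with maximal expansion.

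Where the paper supplies what you are missing is at your first obstacle. Rather than analyzing $\tilde F$ as a random mapping with killing, the paper observes that the problem factors cleanly: first determine the set of \emph{non-cemetery} necklaces (those whose forward orbit never enters a spatially periodic tuple), and note that, conditioned on this set having size $C_n$, the restriction of $\tilde F$ to it is a uniform random mapping on $C_n$ points with no killing at all. The substantive work is then to find the limiting law of $C_n/n^\sigma$. This is done by a sequential construction of the cemetery set (Algorithm~\ref{algorithm: generating random DEC}) that yields a stochastic difference equation converging, after scaling, to a Brownian bridge started at $1/\sqrt{\sigma}$ and ending at $0$; the limiting proportion of non-cemetery vertices is $1$ minus the bridge's hitting time of $0$ (Theorem~\ref{theorem:brbr}). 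For odd $\sigma$ the bridge starts at $0$, so $C_n/n^\sigma\to 1$ in probability and your leakage concern is immaterial; for even $\sigma$ the hitting-time law is nontrivial and feeds into the final limit exactly as you anticipate.
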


Computations with the limiting distribution are a challenge, so we resort to Monte-Carlo simulations in Section \ref{section: conclusions and open problems} to illustrate Theorem~\ref{theorem: main}.

In our companion paper \cite{gl1}, we assume 
that $r=2$ and show that the limiting probability, as $n \to \infty$, that a random rule has a PS with temporal and spatial periods 
confined to a finite set $\Tau \times \Sigma \subset \mathbb{N} \times \mathbb{N}$, is nontrivial and can be computed explicitly.
Consequently, we answer another natural question, on the asymptotic size of the \textit{shortest} temporal period 
$Y_{\sigma, n}$ of random-rule 
PS with a spatial period $\sigma$. This random 
variable converges to a nontrivial distribution
(\cite{gl1}, Corollary 3), and is 
therefore much smaller than $X_{\sigma,n}$, which is 
on the order $n^{\sigma/2}$, at least for $r=\sigma=2$.  
It is also interesting to compare the typical value
of $X_{\sigma,n}$ to its maximum over all rules \cite{gl3}. It turns 
out that even $\max_f Y_{\sigma, n}(f)$ is on the order of $n^\sigma$
(which, by the pigeonhole principle, is the largest possible). 

We now give an outline of the rest of the paper. 
In Section~\ref{section: Directed graph on equivalence classes}, 
we construct a directed graph, similar to 
the one in \cite{gl1}, and its use in analysis of PS is 
spelled out in Section~\ref{section: dec and ps}.  
The  proof of Theorem~\ref{theorem: main} is finally given in Section~\ref{section: main result}. On the way, we prove the following theorem, which may be of independent interest, in which $C_n=C_{\sigma,n}$ is the number of equivalence classes of initial conditions, modulo translations,  
that are periodic with (minimal) period $\sigma$ and are such that the CA evolution never reduces the spatial period.
\begin{thm}\label{theorem:brbr} Assume $\sigma\le r$. 
If $\sigma$ is 
even, then, as $n\to\infty$, $n^{-\sigma}C_n$ converges in 
distribution to $1-\tau$, where $\tau$ is the hitting time 
of $0$ of the Brownian bridge $\eta(t)$ that starts at
$\eta(0)=1/\sqrt{\sigma}$ and ends at $\eta(1)=0$. If $\sigma$ is odd,  $n^{-\sigma}C_n\to 1$ in probability. 
\end{thm}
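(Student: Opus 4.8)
\emph{Reduction to a killed random mapping.} The plan is to read off from the directed graph on equivalence classes built in Section~\ref{section: Directed graph on equivalence classes} the map $F$ that sends each period-$\sigma$ class to the class of its image, and to recognize $F$ as a uniform random mapping carrying a small escape probability. The key structural input is $\sigma\le r$: every length-$r$ window of a period-$\sigma$ configuration contains a full period and therefore determines the class together with its phase, so distinct primitive classes read the rule $f$ at disjoint arguments. Hence the images $\{F(w)\}$ are independent, each a uniform word in $\Z_n^\sigma$; reducing modulo translation, $F$ is a uniform random mapping on the set of $m=n^\sigma/\sigma\,(1+o(1))$ primitive classes, except that with probability $\rho_n=\mathbb P(\text{uniform word imprimitive})$ the image is imprimitive, which I treat as absorption into a ``graveyard'' $\partial$. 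The classes whose forward orbit never reaches $\partial$ are exactly those lying in the basin of a genuine period-$\sigma$ cycle, and (up to the class-versus-configuration bookkeeping of Section~\ref{section: dec and ps}) these are the classes counted by $C_n$; the escaping classes form the basin of $\partial$. So the task is the limiting relative size of the basin of $\partial$.

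\emph{The critical scaling and the even/odd dichotomy.} A forward orbit either escapes to $\partial$ (at rate $\rho_n$ per step) or first repeats a class and closes into a cycle (after $\sim\sqrt m\sim n^{\sigma/2}/\sqrt\sigma$ steps). Comparing the two scales through $\alpha_n:=\rho_n\sqrt m$ drives the whole result. When $\sigma$ is even the dominant imprimitive words have period $\sigma/2$, so $\rho_n=n^{-\sigma/2}(1+o(1))$ and $\alpha_n\to 1/\sqrt\sigma$, a finite constant: escaping and cycling are balanced and a nondegenerate limit appears. When $\sigma$ is odd its smallest prime factor is $p\ge 3$, so $\rho_n=n^{-\sigma(1-1/p)}\ll n^{-\sigma/2}$ and $\alpha_n\to 0$; a uniformly chosen class then closes into a cycle long before it can escape, $\mathbb E[\,|\text{basin of }\partial|\,]/m=O(\alpha_n)\to 0$, and a second–moment bound upgrades this to $n^{-\sigma}C_n\to 1$ in probability. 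This is the easy half.

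\emph{The even case via a Brownian-bridge limit.} For $\sigma$ even I would encode the functional graph of the killed mapping through its in-degree sequence: the $m$ images are $m$ balls dropped into the $m+1$ bins consisting of the $m$ classes and $\partial$, each landing in $\partial$ with probability $\rho_n$ and otherwise uniformly. Thus $\partial$ receives $d_\partial\sim\Bin(m,\rho_n)\approx\sqrt m/\sqrt\sigma$ balls, while the class in-degrees are asymptotically i.i.d.\ $\Poi(1)$. Exploring the basin of $\partial$ in depth-first order produces the walk $W$ tracking the number of as-yet-unexplored preimages, with steps $d-1$ of mean $0$ and variance $1$, started at the anomalously large height $d_\partial$; the basin is exhausted exactly when $W$ first returns to $0$, so $|\text{basin of }\partial|$ equals that first-passage time. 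Because the $m$ in-degrees sum to the fixed number of edges, the increments are exchangeable with fixed sum, so $W$ is a lattice \emph{bridge}, not a free walk; a bridge version of Donsker's invariance principle then gives $W_{\lfloor ms\rfloor}/\sqrt m\Rightarrow \eta(s)$, a Brownian bridge on $[0,1]$ with $\eta(0)=1/\sqrt\sigma$ and $\eta(1)=0$. Pushing the first-passage functional through the limit yields $|\text{basin of }\partial|/m\Rightarrow\tau$, the first hitting time of $0$ by $\eta$, and hence $n^{-\sigma}C_n\Rightarrow 1-\tau$. As a consistency check, the per-class escape probability can be computed directly and gives $\mathbb E[\,n^{-\sigma}C_n\,]\to\int_0^\infty s\,e^{-s^2/2-s/\sqrt\sigma}\,ds$, which must equal $\mathbb E[1-\tau]$.

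\emph{Main obstacle.} I expect the crux to be the last, analytic step in the even case. First, the exploration walk is not a clean i.i.d.\ bridge: it carries one anomalous $\Theta(\sqrt m)$ degree at $\partial$ sitting atop a $\Poi(1)$ bridge, and establishing its convergence to the Brownian bridge with the exact initial height $1/\sqrt\sigma$, together with the tightness this requires, is delicate. Second, the first hitting time is not a continuous functional of the path, so transferring it through the weak limit needs the limiting bridge to cross $0$ transversally at $\tau$ almost surely, making $\tau$ a continuity point; only then does convergence in distribution follow, rather than mere convergence of the mean. A secondary but unavoidable chore is to show that the genuine map $F$ differs from an exactly uniform killed random mapping only by negligible amounts, the rare coincidences among windows and the imprimitive-image corrections beyond the leading $n^{-\sigma/2}$ term, in a way that does not disturb the $\sqrt m$-scale limit.
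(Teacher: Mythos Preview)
Your proposal is correct and lands on the same limiting object as the paper, but the route is technically different. The paper also explores the basin of the periodic (cemetery) vertices one preimage-step at a time, tracking $(Y_k,Z_k)=(\text{not-yet-absorbed},\text{active cemetery})$; your walk $W$ is precisely their $Z_k$. The difference is in how the limit is extracted. You want to view the increments as an exchangeable degree sequence (balls in boxes, approximately $\Poi(1)$ conditioned on their sum) and invoke a bridge version of Donsker, then push the first-passage functional through. The paper instead keeps the exact conditional law $\beta_k\sim\Bin(Y_k,1/(Y_k+Z_k))$, writes the resulting two-dimensional stochastic difference equation, and verifies Kushner's conditions for diffusion approximation; the drift $-\eta/(1-t)$ of the Brownian bridge then falls out of the Binomial parameter rather than from a sum constraint. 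Their approach avoids the issue you flag as the main obstacle---the anomalous $\Theta(\sqrt m)$ initial height is simply the initial condition of the SDE, and the changing pool size is built into the coefficients---at the cost of a somewhat heavy verification (they need truncations $g,h$ to get uniform bounds and then remove them). Your approach is lighter conceptually but, as you note, requires care that the exploration-ordered degrees really form an exchangeable bridge and that the hitting-time functional passes to the limit; both are standard but not free. For odd $\sigma$ the two arguments coincide: the initial cemetery has size $o(\sqrt N)$, so the bridge starts at $0$ and $\tau=0$ a.s., which is your $\alpha_n\to 0$ observation.
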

See \cite{aldous1994brownian,amp2004} for related results on random mappings. 
To prove Theorem~\ref{theorem:brbr}, we present a sequential construction 
of the random rule that yields a stochastic 
difference equation whose solution converges to the Brownian 
bridge. Once Theorem~\ref{theorem:brbr} is established, the remainder of  the proof of  Theorem~\ref{theorem: main} is 
largely an application of existing results on random mappings and
random permutations, which we adapt to our purposes in 
Section~\ref{section:ran-map}. 
In our final Section~\ref{section: conclusions and open problems}, we discuss extensions of our results, present several simulation results and propose a few open problems for future
consideration.


\section{The directed graph on equivalence classes of configurations} \label{section: Directed graph on equivalence classes}

In this section, we introduce a variant of the 
configuration digraph \cite{gl1}, a concept introduced in \cite{wolfram1984computation}. While conceptually straightforward, 
this is a very convenient tool to study temporal periods 
of PS with a fixed spatial period $\sigma\ge 1$. 
In a sense, it is
dual to the label 
digraph \cite{gravner2012robust, gl1}, where a temporal 
period is fixed instead. It will be convenient to interpret 
periodic configuration with a spatial period $\sigma$, 
or a divisor of $\sigma$, as evolving on the finite 
interval $\{0,\ldots, \sigma-1\}$ with periodic boundary 
conditions, as in \cite{wolfram1984computation}. All our finite
configurations will be on this interval, with indices taken 
modulo $\sigma$. We use the standard notation 
$\mu$ and $\varphi$ for M\"obius and Euler totient 
function.

\begin{defi}
Fix a spatial period $\sigma \ge 1$ and an $r$-neighbor rule $f$.
Let $A = a_0\dots a_{\sigma - 1}$ and $B = b_0\dots b_{\sigma - 1}$ be two  configurations.  We say that $A$ \textbf{down-extends to} $B$ if
the rule maps $A$ to $B$ in 
one update, that is, 
$$
f(a_{i-r+1},\ldots, a_i)=b_i, \quad i=0,\ldots, \sigma-1,
$$
and we write  $A\downto B$.\end{defi}

For example, if $f$ is the rule with the PS of Figure \ref{figure: PS and tile}, and $\sigma=3$, then $012 \downto 101 \downto 120$, etc.

\begin{defi} Fix a spatial period $\sigma$ and suppose  
$\sigma'$ is a proper divisor of $\sigma$.
A configuration $A = a_0\dots a_{\sigma-1}$ is   \textbf{periodic} with \textbf{period} 
$\sigma^\prime$ if it can be divided into 
$\sigma/\sigma^\prime > 1$ identical words, 
and $\sigma^\prime$ is the smallest such number. If no such 
$\sigma^\prime$ exists, $A$ is \textbf{aperiodic}.
\end{defi}

\begin{lem}\label{lemma-T}
The number of length-$\sigma$ $n$-state aperiodic   configurations is
\begin{align*}
T(\sigma, n)  = 
\sum_{d\bigm|\sigma} n^d  \mu\left(\frac{\sigma}{d}\right)
= 
\begin{cases}
n^\sigma - n^{\sigma/2} + o(n^{\sigma/2}), & \text{if } \sigma \text{ is even}\\
n^\sigma + o(n^{\sigma/2}), & \text{if } \sigma \text{ is odd}
\end{cases}.
\end{align*}
\end{lem}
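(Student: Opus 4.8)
The plan is to prove the exact summation identity by M\"obius inversion over the divisor lattice, and then read off the two asymptotic regimes from the two largest divisors of $\sigma$.

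First I would establish the recursion $n^\sigma = \sum_{d\mid\sigma} T(d,n)$. The idea is that every length-$\sigma$ configuration has a unique minimal period $d$, which necessarily divides $\sigma$, so grouping the $n^\sigma$ configurations by this minimal period partitions them. A length-$\sigma$ configuration with minimal period $d$ is exactly the $(\sigma/d)$-fold repetition of a unique aperiodic length-$d$ word; this gives a bijection between the class of minimal period $d$ and the set of aperiodic length-$d$ configurations, so that class has size $T(d,n)$. Summing over $d\mid\sigma$ yields the identity. Since this holds for every value of the period, classical M\"obius inversion then produces the closed form $T(\sigma,n)=\sum_{d\mid\sigma}\mu(\sigma/d)\,n^d$, which is the stated aperiodic-necklace count.

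For the asymptotics I would isolate the two leading contributions. The term $d=\sigma$ contributes $\mu(1)n^\sigma=n^\sigma$, and the size of the correction is governed by the largest proper divisor of $\sigma$, namely $\sigma/p$ where $p$ is the smallest prime factor of $\sigma$. When $\sigma$ is even we have $p=2$, so this divisor is $\sigma/2$ with coefficient $\mu(2)=-1$, contributing $-n^{\sigma/2}$; every remaining divisor $d$ satisfies $\sigma/d\ge 3$, hence $d\le \sigma/3$, and the residual sum is $O(n^{\sigma/3})=o(n^{\sigma/2})$. When $\sigma$ is odd we have $p\ge 3$, so every proper divisor satisfies $d\le\sigma/3<\sigma/2$, and the whole correction is already $o(n^{\sigma/2})$, leaving $n^\sigma+o(n^{\sigma/2})$.

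The argument is essentially routine, and there is no serious obstacle. The two points that merit care are the first step, where one must verify that the minimal-period decomposition is a genuine bijection (in particular that the aperiodic ``root'' word of a given configuration is unique), and the elementary divisor bound $d\le\sigma/3$ for any divisor strictly below $\sigma/2$, since it is exactly this gap that separates the $-n^{\sigma/2}$ correction in the even case from the negligible remainder in both cases.
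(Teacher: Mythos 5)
Your proof is correct: the minimal-period decomposition $n^\sigma=\sum_{d\mid\sigma}T(d,n)$ is valid (the length-$d$ root word is indeed unique and aperiodic, since a smaller period of the root would propagate to the whole configuration), M\"obius inversion gives the closed form, and the divisor gap $d\le\sigma/3$ for all proper divisors other than $\sigma/2$ correctly isolates the $-n^{\sigma/2}$ term in the even case. The paper itself offers no argument, deferring entirely to a citation, and what you have written is exactly the standard aperiodic-necklace count that the cited reference relies on, so your proposal matches the intended proof while making it self-contained.
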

\begin{proof}
See \cite{cattell2000fast}.
\end{proof}

\begin{defi}\label{defination: circular shift}
Let $\mathbb{Z}_n^\sigma$ consist of all length-$\sigma$  configurations. 
A \textbf{circular shift} is a map $\pi: \mathbb{Z}_n^\sigma \to \mathbb{Z}_n^\sigma$, satisfying $\pi(a_0a_1\dots a_{\sigma-1}) =  a_\ell a_{\ell+1} \dots a_{\sigma-1+\ell}$ for some $\ell \in \mathbb{Z}_+$  , for all $a_0a_{1} \dots a_{\sigma-1} \in \mathbb{Z}_n^\sigma$ (recall the subscripts are taken modulo of $\sigma$).
The \textbf{order} of a circular shift $\pi$ is the smallest $k$ such that $\pi^k(A) = A$ for all $A \in \mathbb{Z}_n^\sigma$, and is denoted by $\ord(\pi)$.
\end{defi}

We say that \textbf{$A$ is equal to $B$ up to circular shift},
or in short \textbf{$A$ is equivalent to $B$},  
if there is a circular shift $\pi: \mathbb{Z}_n^\sigma \to \mathbb{Z}_n^\sigma$ such that $A = \pi(B)$. We record 
the following observation from \cite{gl1}.

\begin{lem} \label{lemma: euler totient}
Let $\pi$ be a circular shift on $\mathbb{Z}_n^\sigma$ and $A\in \mathbb{Z}_n^\sigma$ be any aperiodic finite configuration. 
Then:
(1) ord$(\pi)\bigm|\sigma$; and
(2) if $d\bigm|\sigma$, then $|\left\{B \in \mathbb{Z}_n^\sigma: A = \pi(B) \text{ for some }\pi \text{ with }\ord(\pi) = d \right\}| = \varphi(d)$. 
\end{lem}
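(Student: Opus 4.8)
The plan is to identify the circular shifts with the cyclic group $\mathbb{Z}/\sigma\mathbb{Z}$, and then to exploit the aperiodicity of $A$ to guarantee that distinct shifts of the prescribed order produce distinct preimages of $A$. First I would observe that a circular shift is completely determined by its shift amount modulo $\sigma$: writing $\pi_\ell$ for the shift with $(\pi_\ell(A))_i = a_{i+\ell}$, one checks $\pi_\ell \circ \pi_m = \pi_{\ell+m}$ and $\pi_\ell = \pi_{\ell \bmod \sigma}$, so the $\sigma$ distinct circular shifts form a cyclic group isomorphic to $\mathbb{Z}/\sigma\mathbb{Z}$. The smallest $k$ with $\pi_\ell^k = \pi_{k\ell} = \mathrm{id}$ is $k = \sigma/\gcd(\ell,\sigma)$, so $\ord(\pi_\ell) = \sigma/\gcd(\ell,\sigma)$, which divides $\sigma$; this settles part (1).

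For part (2), the key elementary point is that for each circular shift $\pi$ the equation $A = \pi(B)$ has the unique solution $B = \pi^{-1}(A)$. Hence the set in the statement is exactly $\{\pi^{-1}(A) : \ord(\pi) = d\}$, and its cardinality equals the number of order-$d$ shifts \emph{provided} the map $\pi \mapsto \pi^{-1}(A)$ is injective on those shifts. I would first count the order-$d$ shifts: by the formula above, $\ord(\pi_\ell) = d$ iff $\gcd(\ell,\sigma) = \sigma/d$, i.e.\ $\ell = (\sigma/d)m$ with $0 \le m < d$ and $\gcd(m,d)=1$, giving exactly $\varphi(d)$ of them, which is the standard count of elements of order $d$ in a cyclic group of order $\sigma$.

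The crucial step, and the only place where aperiodicity enters, is this injectivity. Here I would argue that the stabilizer of $A$ in the shift group is trivial: if $\pi_\ell(A)=A$, then $a_{i+\ell}=a_i$ for all $i$, so $A$ is invariant under the shift by $g=\gcd(\ell,\sigma)$ and therefore splits into $\sigma/g$ identical blocks of length $g$; aperiodicity forces $g=\sigma$, i.e.\ $\pi_\ell = \mathrm{id}$. Since the shifts commute, $\pi^{-1}(A) = (\pi')^{-1}(A)$ implies $\bigl((\pi')^{-1}\pi\bigr)(A) = A$, whence $\pi = \pi'$; the map is injective, the count is preserved, and we obtain $\varphi(d)$.

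I expect this stabilizer argument to be the main (if modest) obstacle, since it is the one point where the hypothesis on $A$ is genuinely used: for a periodic $A$ the map $\pi \mapsto \pi^{-1}(A)$ would collapse several shifts together and the count would drop below $\varphi(d)$. The remaining arithmetic — the order formula and the elementary count of order-$d$ elements of $\mathbb{Z}/\sigma\mathbb{Z}$ — is routine, so I would state it briefly and devote the care to the injectivity claim.
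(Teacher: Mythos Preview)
Your argument is correct. The paper itself does not give a proof of this lemma at all; it simply records the statement as an observation from the companion paper \cite{gl1}, so there is no in-paper argument to compare against. Your route---identify the circular shifts with the cyclic group $\mathbb{Z}/\sigma\mathbb{Z}$, read off $\ord(\pi_\ell)=\sigma/\gcd(\ell,\sigma)$ for part~(1), and for part~(2) count the $\varphi(d)$ elements of order $d$ and use aperiodicity to show the orbit map $\pi\mapsto\pi^{-1}(A)$ is injective---is the natural one and is complete as written.
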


As $A \downto B$ implies $\pi(A) \downto \pi(B)$ for any circular shift $\pi$, this relation defined a directed 
graph on equivalence classes in \cite{gl1}. We now define a 
convenient variant, which we call the \textbf{digraph on equivalence classes} (\textbf{DEC}) $G_\sigma(f) = (V_\sigma, E_\sigma(f))$, associated with $f$ and $\sigma$. 
Under the equivalence relation defined above, $\mathbb{Z}_n^\sigma$ is partitioned into equivalence classes, which inherit 
periodicy or aperiodicity from their representatives. 
Note that the cardinality of an aperiodic equivalence class is $\sigma$, while the cardinality of a periodic equivalence class is a proper divisor of $\sigma$.
We regard each aperiodic equivalence class as a single vertex, called \textbf{aperiodic vertex}, of the DEC;  thus there are 
$\frac{T(\sigma, n)}{\sigma}$ 
aperiodic vertices.

Next, we combine periodic classes together to form vertices called \textbf{periodic vertices}, so that, with one possible 
exception, each vertex contains $\sigma$  
configurations. This can be achieved for a large enough 
$n$ (certainly for $n\ge \sigma^2$) 
as follows. For each proper division $\sigma'>1$ 
of $\sigma$, divide all configurations with period $\sigma'$ 
into sets, which all have cardinality $\sigma$, except for possibly one set; fill that last set with the necessary number of period-1 configurations to make its cardinality $\sigma$. Each of these sets represents a different periodic vertex. 
At the end, we have 
$\iota = n^\sigma - T(\sigma, n) - \sigma\lfloor \frac{n^\sigma - T(\sigma, n)}{\sigma}\rfloor < \sigma$ 
leftover period-1 configurations, which we combine into  
 the exceptional 
\textbf{initial periodic vertex}, denoted by $v_0$.
We let $V_a$ and $V_p$ be the sets of aperiodic and periodic vertices, so that the vertex set is $V_\sigma = V_a \cup V_p \cup \{v_0\}$.

Having completed the definition of the vertex set of DEC, we now specify its set $E_\sigma(f)$ of directed edges. 
An arc $\overrightarrow{uv} \in E_\sigma(f)$ if and only if: 1. $u \in V_a$, $v\in V_\sigma$; and 2. there exist $A \in u$ and $B \in v$ such that $A \downto B$.

An example of DEC with $\sigma = 2$ of a $5$-state rule is 
given in Figure \ref{figure: DEC}.
In this example, $V_p =  \left\{\left\{00, 11\right\}, \left\{22, 33\right\}\right\}$, $v_0 = \left\{44\right\}$ and other vertices are all in $V_a$.
We do not completely specify the rule that generate this DEC, 
as different CA rules (even a with different range $r$) may induce the same DEC.

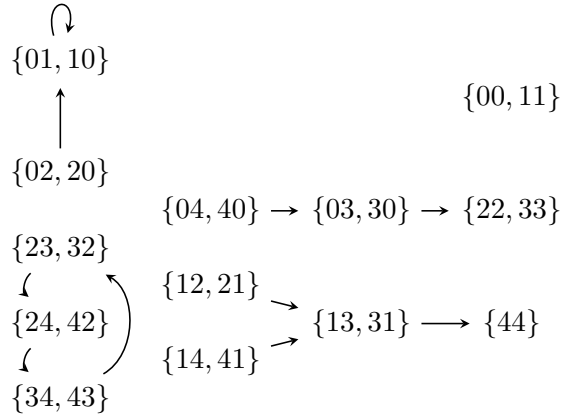
\begin{figure}[h]
\centering
\begin{tikzpicture}
[
            > = stealth, 
            shorten > = 1pt, 
            auto,
            node distance = 3cm, 
            semithick 
        ]

        \tikzstyle{every state}=[
            draw = black,
            thick,
            fill = white,
            minimum size = 4mm
        ]
        
        \node (1) at (-5, 2) {$\{01, 10\}$};
        \node (2) at (-5, 0.5) {$\{02, 20\}$};
        \node (3) at (-1, 0) {$\{03, 30\}$};
        \node (4) at (-3, 0) {$\{04, 40\}$};
        \node (5) at (-3,-1) {$\{12, 21\}$};
        \node (6) at (-1,-1.5) {$\{13, 31\}$};
        \node (7) at (-3,-2) {$\{14, 41\}$};
        \node (8) at (-5,-0.5) {$\{23, 32\}$};
        \node (9) at (-5,-1.5) {$\{24, 42\}$};
        \node (10) at (-5,-2.5) {$\{34, 43\}$};
        \node (11) at (1,1.5) {$\{00, 11\}$};
        \node (12) at (1,0) {$\{22, 33\}$};
        \node (13) at (1,-1.5) {$\{44\}$};



\path[->] (1) edge [loop above] node {} (1);
        \path[->] (2) edge node {} (1);
        \path[->] (8) edge [bend right=50] node {} (9);
        \path[->] (9) edge [bend right=50] node {} (10);
        \path[->] (10) edge[bend right=60] node {} (8);
        \path[->] (4) edge node {} (3);
        \path[->] (3) edge node {} (12);
        \path[->] (5) edge node {} (6);
        \path[->] (7) edge node {} (6);
        \path[->] (6) edge node {} (13);


    \end{tikzpicture}
\caption{DEC of a $2$-neighbor, $5$-state rule.}
\label{figure: DEC}
\end{figure}

The set of all DEC's generated by $r$-neighbor $n$-state rules is denoted by  
$\mathcal{G}_{\sigma}=\mathcal{G}_{\sigma, r, n}$. 
Choosing $f$ at random, we obtain a random 
DEC denoted by 
$G_\sigma=(V_\sigma, E_\sigma)\in \mathcal{G}_{\sigma}$. 
We now give the resulting distribution of $G_\sigma$.

\begin{lem} \label{lemma: uniform of probability}
For any $u \in V_a$ and $v \in V_\sigma$
$$\mathbb{P} (\overrightarrow{uv} \in E_\sigma) = 
\begin{cases}
\frac{\sigma}{n^\sigma}, & \text{if }v \neq v_0\\
\frac{\iota}{n^\sigma}, & \text{if }v = v_0\\
\end{cases}.
$$
\end{lem}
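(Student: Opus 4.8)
The plan is to reduce the existence of the arc $\overrightarrow{uv}$ to a statement about the image of a single representative of $u$, and then to exploit that this image is uniformly distributed. Fix an aperiodic configuration $A = a_0\cdots a_{\sigma-1}$ representing $u$, and for $k\in\{0,\dots,\sigma-1\}$ let $\pi_k$ denote the circular shift by $k$. Since $A$ is aperiodic, the shifts $\pi_0(A),\dots,\pi_{\sigma-1}(A)$ are distinct and constitute exactly the class $u$. Writing $B_0$ for the (random) image of $A$, i.e.\ the configuration with $A\downto B_0$, the relation $\pi_k(A)\downto\pi_k(B_0)$ shows that the images of the members of $u$ are precisely $\pi_0(B_0),\dots,\pi_{\sigma-1}(B_0)$. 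Because the rule is deterministic, $\overrightarrow{uv}\in E_\sigma$ holds if and only if $\pi_k(B_0)\in v$ for some $k$. I would first record this reformulation and then compute the law of $B_0$.

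The key step is to show that $B_0$ is uniform on $\Z_n^\sigma$. For this I would prove that the windows $w_i=(a_{i-r+1},\dots,a_i)\in\Z_n^r$, $i=0,\dots,\sigma-1$, are pairwise distinct. This is where the regime $\sigma\le r$ enters: if $w_i=w_j$ with $i\neq j$, then $a_{i-k}=a_{j-k}$ for $k=0,\dots,r-1$; since $r\ge\sigma$ the indices $i-k$ range over all of $\Z_\sigma$, so $A$ is invariant under the shift by $d:=(j-i)\bmod\sigma\neq 0$, hence has period $\gcd(d,\sigma)<\sigma$, contradicting aperiodicity. Given distinctness, the values $b_i=f(w_i)$ are evaluations of the random rule at distinct arguments; as a uniform rule assigns i.i.d.\ uniform values to distinct windows, $b_0,\dots,b_{\sigma-1}$ are i.i.d.\ uniform on $\Z_n$, whence $B_0$ is uniform on $\Z_n^\sigma$.

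It remains to convert ``$\pi_k(B_0)\in v$ for some $k$'' into a cardinality count. Here I would use that every target vertex is a union of full equivalence classes, hence closed under circular shifts: an aperiodic vertex is a single $\sigma$-element class (automatically shift-closed); a periodic vertex is assembled from whole period-$\sigma'$ classes together with (shift-invariant) period-$1$ configurations; and $v_0$ consists solely of period-$1$ configurations. Consequently, for any such $v$ and any $k$ we have $\pi_k(B_0)\in v \iff B_0\in\pi_{-k}(v)=v$, so $\overrightarrow{uv}\in E_\sigma$ if and only if $B_0\in v$. By uniformity of $B_0$, $\mathbb{P}(\overrightarrow{uv}\in E_\sigma)=|v|/n^\sigma$. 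Since $|v|=\sigma$ for every $v\neq v_0$ and $|v_0|=\iota$ by construction, this yields the two stated cases.

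The main obstacle I anticipate is the window-distinctness claim and, in particular, making explicit that the clean formula genuinely requires $\sigma\le r$: for $\sigma>r$ repeated windows force coincidences among the coordinates of $B_0$, its law is no longer uniform, and $\mathbb{P}(\overrightarrow{uv})$ then depends on the internal structure of $v$ rather than merely on $|v|$. A secondary point to check carefully is that the grouping procedure defining the periodic vertices never splits an equivalence class, so that the shift-closedness used in the final step holds for every vertex.
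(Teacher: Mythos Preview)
Your argument is correct and follows essentially the same route as the paper: fix a representative $A\in u$, observe that $\overrightarrow{uv}\in E_\sigma$ is equivalent to $A\downto B$ for some $B\in v$, use that the image of $A$ is uniform on $\Z_n^\sigma$, and conclude $\mathbb{P}(\overrightarrow{uv}\in E_\sigma)=|v|/n^\sigma$. The paper's proof is two lines and simply asserts $\mathbb{P}(A\downto B)=1/n^\sigma$ without justification; you supply the missing verification that the $\sigma$ windows $(a_{i-r+1},\dots,a_i)$ are distinct, and you correctly isolate that this step genuinely needs $\sigma\le r$ (a hypothesis the paper's lemma statement omits but its proof tacitly relies on). Your check that every vertex is shift-closed is likewise something the paper leaves implicit---the phrase ``combine periodic \emph{classes}'' in the construction is what guarantees it---so your caution there is warranted but resolves easily.
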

\begin{proof}
For any configurations $A\in u$ and $B\in v$, 
$\mathbb{P}(A \downto B) = 1/n^\sigma$.
Then $\mathbb{P}(\overrightarrow{uv} \in E_\sigma) = |v|\mathbb{P}(A \downto B)$, giving the desired result.
\end{proof}

\section{The connection between DEC and PS}\label{section: dec and ps}

In a DEC, we call a vertex to be a \textbf{cemetery} vertex if it is either a periodic vertex or there is a directed path from it to a periodic vertex (which, we repeat, is a set of configurations with spatial periods less than $\sigma$).
Otherwise, a vertex is said to be \textbf{non-cemetery}.
For example, in Figure \ref{figure: DEC}, the vertices $\{00, 11\}$, $\{22, 33\}$ and $\{44\}$ are cemetery as they are periodic; $\{03, 30\}$, $\{04, 40\}$, $\{12, 21\}$, $\{14, 41\}$ and $\{13, 31\}$ are also cemetery as there exists a directed path from each of them to a periodic vertex; other five vertices are non-cemetery.
The reason that we declare a vertex $C \ni A$ of length $\sigma$ to be cemetery is that when the CA updates to configuration 
$A$, the spatial period is reduced and the dynamics cannot 
produce a PS of spatial period $\sigma$. 
For example, in the DEC of Figure \ref{figure: DEC}, a PS with $\sigma= 2$ cannot contain the configuration $21$, as its appearance leads to $44$, which has spatial period $1$.

It is also important to note that different rules can have the same DEC.
In particular, a cycle in a DEC may generate PS with different temporal periods depending on the rule.
We illustrate this by the $\sigma=2$ example in Figure \ref{figure: DEC}.
First, we locate a directed cycle, say, the one of length 3.
Using a configuration from any vertex on the cycle, say $23$, as the initial configuration,
run the rule starting with $23$ until $23$ appears again.
Now, the temporal period  can be either 3 or 6, depending on the rule $f$.
Namely, if the rule assignments result in, say, $23\downto 24 \downto 43 \downto 23$, then $\tau=3$, while if they are $23\downto 24 \downto 43 \downto 32$, then  $\tau=6$.
In general, if a cycle in DEC has length $\ell$, then the corresponding temporal period of the PS generated by this cycle may have length $d\ell$, where $d$ is any divisor of $\sigma$.

For an arbitrary 
$G\in \mathcal G_\sigma$, define $M(G)$ to be the number of directed cycles in $G$. (For example, $M(G) = 2$ for $G$ in Figure \ref{figure: DEC}.) Let $L_i(G)$ be the length of the $i$th longest directed cycle $C^{(i)}(G)$ of $G$, with $L_i(G)=0$ for $i>M(G)$. Then, 
for a rule $f$, define $M(f)=M(G_\sigma(f))$ and 
$L_i(f)=L_i(G_\sigma(f))$. Furthermore, 
if a PS of temporal period $d\ell$ results from a cycle $C$ of length $\ell$ in $G_\sigma(f)$, we say 
that $C$ has \textbf{expanding number} $d$ under $f$, and use the notation $E_f(C) = d$.  
We let $E_i(f)=E_f(C^{(i)}(G_\sigma(f)))$, 
again defined to be $0$ when $C^{(i)}(G_\sigma(f))$ does not exist, i.e., when $i>M(f)$. 
We state the connection between the longest temporal period and the cycle length in DEC in the following lemma.

\begin{lem} \label{lemma: longest period deterministic}

Let $f$ be a CA rule and $G_\sigma(f)$ be its DEC of period $\sigma$. 
Then we have 
$$X_{\sigma,n}(f) = \max\left\{L_i(f)\cdot E_i(f): i = 1, 2, \ldots \right\}. $$
Moreover, if $C^{(k)}(G_\sigma(f))$ is the longest cycle that is $\sigma$-expanded, then
$$X_{\sigma,n}(f) = \max\left\{L_k(f)\cdot\sigma, L_i(f)\cdot E_i(f) : i = 1, 2, \ldots, k - 1\right\}.$$
\end{lem}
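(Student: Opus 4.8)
The plan is to set up a bijective correspondence between periodic solutions of spatial period $\sigma$ and directed cycles of $G_\sigma(f)$, and then to read off both displayed formulas from this correspondence together with the fact that every expanding number divides $\sigma$. First I would record the structural fact that makes the DEC tractable: because $f$ is deterministic and commutes with circular shifts (so $A\downto B$ implies $\pi(A)\downto\pi(B)$), every aperiodic vertex has out-degree exactly $1$, while periodic vertices and $v_0$ have no outgoing edges. Hence the restriction of $G_\sigma(f)$ to its aperiodic vertices is a functional graph; in particular, every directed cycle consists entirely of aperiodic vertices and the cycles are pairwise vertex-disjoint. A vertex lying on a cycle is automatically non-cemetery, since its forward orbit stays on the cycle and never reaches a periodic vertex.

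Next I would analyze a single cycle $C=C^{(i)}(G_\sigma(f))$ of length $\ell=L_i(f)$. Choosing any configuration $A_0$ from any vertex of $C$ and iterating $f$ for $\ell$ steps returns to the same equivalence class, so $A_\ell=\pi(A_0)$ for a uniquely determined circular shift $\pi$ (the $\sigma$ shifts of the aperiodic $A_0$ are distinct by Lemma~\ref{lemma: euler totient}). Since $f$ commutes with $\pi$, the same $\pi$ is obtained from every starting configuration on $C$, so $d:=\ord(\pi)$ is a well-defined invariant of $C$, equal to $E_i(f)$. Aperiodicity of $A_0$ gives $\pi^k(A_0)=A_0$ iff $\ord(\pi)\mid k$, and on a simple cycle $[A_t]=[A_0]$ iff $\ell\mid t$; combining these, the first return $A_t=A_0$ occurs at $t=d\ell$. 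Thus each cycle yields a PS of spatial period exactly $\sigma$ (all $A_t$ are aperiodic, as $C$ is non-cemetery) and temporal period $L_i(f)\cdot E_i(f)$. Conversely, if $L^\infty$ is any PS of spatial period $\sigma$, then $L$ is aperiodic and its equivalence-class trajectory must return to $[L]$, forcing $[L]$ onto a cycle. Hence the set of temporal periods of PS of spatial period $\sigma$ is exactly $\{L_i(f)E_i(f):1\le i\le M(f)\}$, and taking the maximum gives the first formula.

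For the refined formula, I would use that the expanding number always divides $\sigma$, so $E_i(f)\le\sigma$ and $L_i(f)E_i(f)\le L_i(f)\,\sigma$. If $C^{(k)}$ is the longest $\sigma$-expanded cycle, then $E_k(f)=\sigma$, and for every $i\ge k$ one has $L_i(f)\le L_k(f)$, whence $L_i(f)E_i(f)\le L_i(f)\sigma\le L_k(f)\sigma=L_k(f)E_k(f)$. Therefore the maximum over all $i\ge k$ equals $L_k(f)\sigma$, while the terms with $i<k$ are retained unchanged, which is precisely the second display.

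The only genuinely delicate point will be the well-definedness and computation of the expanding number: verifying that the shift $\pi$ acquired over one loop of the cycle is independent of the basepoint, and that $\ord(\pi)\mid\sigma$ together with the period $\ell$ of the equivalence-class trajectory pins down the first return time as $d\ell$. This is exactly where the commutation of $f$ with circular shifts and the aperiodicity of the configurations on a non-cemetery cycle are used; everything else reduces to the functional-graph structure of the DEC and elementary bookkeeping of the cycle lengths.
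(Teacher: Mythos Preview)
Your argument is correct and follows the same route as the paper. The paper's own proof is two sentences: the first display is ``clear from the definition'' (since $E_i(f)$ is \emph{defined} so that $L_i(f)E_i(f)$ is the temporal period of the PS arising from the $i$th cycle, and every PS of spatial period $\sigma$ arises from some cycle), and the second display ``follows as $\sigma$ is the largest possible expanding number,'' which is exactly your bound $E_i(f)\le\sigma$ combined with $L_i\le L_k$ for $i\ge k$. What you have added is a careful verification that the cycle--PS correspondence is well-defined and that the expanding number equals $\ord(\pi)$; the paper regards this as implicit in the definition of $E_f(C)$ and only spells out the $\ord(\pi)$ identification later, in the proof of Lemma~\ref{lemma: d-expanded}.
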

\begin{proof}
The first part is clear from the definition, and the 
second part follows as $\sigma$ is the largest possible expanding number.
\end{proof}

As a consequence of the above lemma, our task is to study the properties of DEC and expanding numbers when a rule is randomly selected.
A random DEC is essentially a random mapping, after eliminating cemetery vertices, as we will see.
We formulate a lemma on expanding numbers next.
 
\begin{lem} \label{lemma: d-expanded}
Let $G \in \mathcal{G}_{\sigma}$ be a fixed DEC, and $\sigma \le r$. 
Select a rule $f$ at random. Then, conditioned on the event
$\left\{G_\sigma(f) = G\right\}$, the random variables 
$E_i(f)$, $i = 1, \dots, M(G)$, 
are independent.
Also
$$\mathbb{P}\left( E_i(f)= d \biggm| G_\sigma(f) = G \right) = \frac{\varphi(d)}{\sigma}, $$
for $i = 1, \dots, M(G)$ and $d\bigm|\sigma$.
\end{lem}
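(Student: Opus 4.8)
The plan is to show that, conditioned on the DEC, the expanding number of each cycle is governed by a \emph{composite circular shift} accumulated while traversing the cycle once, and that this shift is uniformly distributed over all $\sigma$ circular shifts, independently across cycles. First I would record the functional structure of the DEC: since the down-extension of any configuration is deterministic, each aperiodic vertex has out-degree exactly one, so the DEC restricted to $V_a$ is a functional graph, and in particular its directed cycles $C^{(1)}, C^{(2)}, \dots$ are pairwise vertex-disjoint. This vertex-disjointness is what will ultimately deliver the asserted independence of the $E_i(f)$.

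The crucial step, and the one that uses the hypothesis $\sigma \le r$, is a \textbf{disjointness of neighborhoods} claim. For an aperiodic class $v$ with representative $A = a_0\cdots a_{\sigma-1}$, the $\sigma$ output symbols of the down-extension are read off from the $\sigma$ length-$r$ windows $W_i(A) = (a_{i-r+1}, \dots, a_i)$, $i = 0, \dots, \sigma-1$ (indices mod $\sigma$). Because $r \ge \sigma$, any $r$ consecutive indices cover all residues mod $\sigma$, so each such window already determines the entire configuration; hence (i) by aperiodicity the $\sigma$ windows of $A$ are distinct, (ii) every configuration in $v$ uses the same set $W(v)$ of $\sigma$ windows, and (iii) distinct aperiodic classes use \emph{disjoint} window sets (a shared window would force one representative to be a shift of the other). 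Consequently the values of $f$ that determine the out-edges of different aperiodic vertices form disjoint, hence independent, blocks. I would use this to argue that conditioning on $\{G_\sigma(f)=G\}$ amounts to conditioning each aperiodic vertex's out-edge independently: for a fixed representative $A$, the down-extension $B$ is uniform on $\mathbb{Z}_n^\sigma$ before conditioning (it is the tuple of $\sigma$ i.i.d.\ uniform rule-values on the distinct windows $W(v)$), so conditioning on $B$ lying in the successor class $v'$ makes $B$ uniform over the $\sigma$ elements of $v'$.

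Next I would translate this into the statement about shifts. Fixing representatives $A_1, \dots, A_\ell$ along a cycle $v_1 \to \cdots \to v_\ell \to v_1$ of length $\ell$, write the conditioned down-extension of $A_j$ as $\pi_j(A_{j+1})$, where $\pi_j$ is the circular shift identifying the output with a shift of the representative $A_{j+1}$; by the previous paragraph each $\pi_j$ is uniform over the $\sigma$ shifts, and the $\pi_j$ are independent because they depend on the disjoint blocks $W(v_j)$. Using the shift-equivariance $A \downto B \Rightarrow \pi(A) \downto \pi(B)$ recorded above, traversing the cycle once sends $A_1$ to $\pi_1\pi_2\cdots\pi_\ell(A_1)$, so the expanding number is $E_f(C) = \ord(\pi_1\cdots\pi_\ell)$. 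Since the circular shifts form a cyclic group of order $\sigma$ and the $\pi_j$ are independent and uniform, the product $\pi_1\cdots\pi_\ell$ is again uniform over the $\sigma$ shifts; by Lemma~\ref{lemma: euler totient} exactly $\varphi(d)$ of these have order $d$ for each $d \mid \sigma$, giving $\mathbb{P}(E_i(f) = d \mid G_\sigma(f)=G) = \varphi(d)/\sigma$. Finally, the vertex-disjointness of distinct cycles means the composite shifts of different cycles depend on disjoint window blocks, yielding the claimed independence of $E_1(f), \dots, E_{M(G)}(f)$.

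I expect the main obstacle to be the disjointness-of-neighborhoods step, since it is the place where $\sigma \le r$ enters decisively (guaranteeing each window encodes the full configuration, so distinct classes never share rule-entries) and where one must verify that conditioning on the global event $\{G_\sigma(f)=G\}$ genuinely factorizes over all aperiodic vertices, rather than merely over the edges of a single cycle. Everything downstream---uniformity of each $\pi_j$, equivariant composition around the cycle, and the count $\varphi(d)$---is then routine.
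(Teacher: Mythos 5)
Your proposal is correct and follows essentially the same route as the paper's proof: representatives along each cycle, circular shifts whose order gives the expanding number, Lemma~\ref{lemma: euler totient} for the count $\varphi(d)/\sigma$, and the hypothesis $\sigma\le r$ to decouple distinct cycles. The paper's version is much terser (it packs the composite shift into a single $\pi_1$ and simply asserts independence ``as $\sigma\le r$''), whereas you make explicit the disjoint-window argument and the uniformity of the product of independent uniform shifts --- details the paper leaves implicit rather than a genuinely different approach.
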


\begin{proof}
Let a cycle $C^{(i)}(G)$ be $v_1 \to v_2 \to \dots \to v_k \to v_1$.
Let $A_j$'s be configurations of length $\sigma$ such that $A_j \in v_j$, $j = 1, \dots, k$.
Then there are circular shifts, $\pi_j$'s, $j = 1, \dots, k$, such that
$A_1 \downto \pi_2(A_2) \downto \dots \downto \pi_k(A_k) \downto \pi_1(A_1)$, under rule $f$.
Now, $E_i(f) = d$ if and only if ord$(\pi_1) = d$, which is independent from other cycles as $\sigma \le r$ and has the desired probability by Lemma \ref{lemma: euler totient}.
\end{proof}

In summary, we may study the probabilistic behavior of 
$X_{\sigma,n}$ by moving from the sample space $\Omega_{r, n}$ to $\mathcal{G}_{\sigma} \times \Xi_\sigma^\infty$, where $\Xi_\sigma = \{d\in \mathbb{N}: d\bigm| \sigma\}$.
The marginal probability distributions on components are independent from each other.
The distribution on $\mathcal{G}_{\sigma}$ is given in Lemma~\ref{lemma: uniform of probability}, while the distribution on each component of $\Xi_\sigma^\infty$ is given by Lemma \ref{lemma: d-expanded}:
$\mathbb{P} \left(\{w\}\right) = \frac{\varphi(w)}{\sigma}$, for $w \in \Xi_\sigma$.
If the random variables $T_i: \Xi_\sigma \to \Xi_\sigma$ are defined to be identities, then the distribution of $X_{\sigma,n}$ is given by 
$$\max\left\{L_i\left(G_\sigma\right)\cdot T_i(w): i = 1, 2, \dots \right\} =: \max\left\{L_i\cdot T_i: i = 1, 2, \dots \right\}.$$
Let $K_\sigma = \min\{i: T_i = \sigma\}$ be a random variable on $\Xi_\sigma^\infty$, representing the smallest index of $T_i$'s that is equal to $\sigma$.
Then $\mathbb{P} \left( K_\sigma = k \right) = \left(1 - \frac{\varphi(\sigma)}{\sigma}\right)^{k - 1} \left(\frac{\varphi(\sigma)}{\sigma} \right)$ for $k\ge 1$, i.e., $K_\sigma$ is $\text{Geometric} \left(\frac{\varphi(\sigma)}{\sigma} \right)$.
Then we may write
$$
X_{\sigma,n} = \max\left\{L_i\cdot T_i^{\prime}, L_{K_\sigma}\sigma, i = 1, 2, \dots , K_\sigma - 1\right\},$$
where
$\mathbb{P}\left( T_i^\prime = d\right) = \mathbb{P}\left( T_i = d \bigm| T_i \neq \sigma\right) = \frac{\varphi(d)}{\sigma - \varphi(\sigma)}$, for $d\bigm| \sigma$ and $d \neq \sigma$.

\section{Random mappings}\label{section:ran-map}
In this section, we discuss a result about the cycle structure of random mapping, indicating that the joint distribution of the longest $k$ cycles converges after a proper scaling.

We will consider the function space $\mathcal{R}_N =\{g: \mathbb{Z}_N \to \mathbb{Z}_N \}$ containing all functions from $\mathbb{Z}_N$ into itself. 
Clearly $|\mathcal{R}_N| = N^N$.
A finite sequence $x_0, \dots, x_{\ell-1} \in \mathbb{Z}_N$ is a \textbf{cycle} of length $\ell$ if $g(x_0) = x_1, g(x_1) = x_2, \dots, g(x_{\ell-2}) = x_{\ell-1}$ and $g(x_{\ell - 1}) = x_0$.
We call $g$ a \textbf{random mapping} if $g$ is randomly and uniformly selected from $\mathcal{R}_N$.
Let $P_N^{(k)}$ be the random variable representing the $k$th longest cycle length of a random mapping from $\mathcal{R}_N$.
More extensively studied function space is $\mathcal{S}_N = \{g: \mathbb{Z}_N \to \mathbb{Z}_N: g \text{ is bijective}\}$ containing all permutations of $\mathbb{Z}_N$. 
Clearly, $|\mathcal{S}_N| = N!$ and a cycle can be defined in the same way.
We call $g$ a \textbf{random permutation} if $g$ is randomly and uniformly selected from $\mathcal{S}_N$ and we use $Q_N^{(k)}$ to denote the random variable representing the $k$th longest cycle length of a random permutation from $\mathcal{S}_N$.
The probabilistic properties of $P_N^{(k)}$ and $Q_N^{(k)}$ have been investigated in a number of papers, including
\cite{arratia1992limit, flajolet1989random, arratia2003logarithmic, hansen2002compound}.

What is relevant to us is the distribution of $\left(P_N^{(1)}, P_N^{(2)}, \dots, P_N^{(k)}\right)$ as $N \to \infty$, for which 
we are not aware of a direct reference.   
We can, however,    
use the fact that for a random mapping, conditioning on the set of elements that belong to cycles generates a random permutation. To begin, we let $M_N$ be the number of elements from $\mathbb{Z}_N$ that belong to cycles of a random mapping from $\mathcal{R}_N$. The following well-known result provides
the distribution of $M_N$, see \cite{arratia1992limit} or \cite{bollobas2001random}. 

\begin{lem}\label{lemma: vertices on cycle of random mapping}
We have 
$$\mathbb{P}\left(M_N = s\right) = \frac{s}{N}\prod_{j = 1}^{s - 1}\left(1 - \frac{j}{N} \right), \quad s = 1, \dots, N.$$
\end{lem}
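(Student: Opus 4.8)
The plan is to enumerate directly the mappings in $\mathcal{R}_N$ whose functional graph has exactly $s$ cyclic elements, and then divide by $|\mathcal{R}_N| = N^N$. The starting point is the standard structural description of the functional graph of $g$: following the edges $x \mapsto g(x)$, every element eventually enters a cycle, so $\mathbb{Z}_N$ splits into the set $S$ of \emph{cyclic} elements (those on some cycle) together with the remaining $N-s$ \emph{tree} elements that feed into $S$. Two facts pin this structure down. First, the restriction $g|_S$ is a bijection of $S$ onto itself, i.e.\ a permutation of the $s$ cyclic elements. Second, the restriction of the edge set $x\mapsto g(x)$ to the non-cyclic elements forms a rooted forest on $\mathbb{Z}_N$ whose roots are exactly the elements of $S$, since from every non-cyclic element the orbit reaches $S$ without closing a new cycle.

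First I would count the mappings with $M_N = s$ by the three independent choices suggested by this description: choose the cyclic set $S$ in $\binom{N}{s}$ ways; choose the permutation $g|_S$ in $s!$ ways; and choose the tree edges attached to $S$. The number of rooted forests on $N$ labeled vertices with a prescribed root set of size $s$ is $s\,N^{N-s-1}$, the generalized Cayley formula. Multiplying, the number of mappings with exactly $s$ cyclic elements is
$$\binom{N}{s}\, s!\, \cdot\, s\,N^{N-s-1} = \frac{N!}{(N-s)!}\, s\, N^{N-s-1}.$$
Then I would divide by $N^N$ and simplify. Writing $\frac{N!}{(N-s)!} = N(N-1)\cdots(N-s+1) = N^{s}\prod_{j=1}^{s-1}\bigl(1-\tfrac{j}{N}\bigr)$, the powers of $N$ collapse and one obtains
$$\mathbb{P}(M_N = s) = \frac{N^{s}\prod_{j=1}^{s-1}\bigl(1-\tfrac{j}{N}\bigr)\, s\, N^{N-s-1}}{N^N} = \frac{s}{N}\prod_{j=1}^{s-1}\Bigl(1-\frac{j}{N}\Bigr),$$
as claimed; the range $s=1,\dots,N$ is forced since $M_N\ge 1$ (every mapping has at least one cycle) and $M_N\le N$.

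The main obstacle is the forest-counting step: the identity $s\,N^{N-s-1}$ for the number of rooted forests with a prescribed root set is the only non-elementary input, everything else being bookkeeping. I would either invoke it as the generalized Cayley formula or prove it directly, for instance through the bijection that sends such a forest to the tree on $\{0\}\cup\mathbb{Z}_N$ in which the auxiliary vertex $0$ is adjacent precisely to the $s$ roots, combined with a Pr\"ufer-type encoding (alternatively, the matrix-tree theorem yields the count immediately). As an independent sanity check that does not replace the enumeration but confirms the answer, I note that the same mass function arises from the ``$\rho$-length'' of the orbit of a uniformly random starting point: by the principle of deferred decisions each new iterate is uniform on $\mathbb{Z}_N$ until a repeat occurs, so the probability that the first $s$ iterates are distinct equals $\prod_{j=1}^{s-1}\bigl(1-\tfrac{j}{N}\bigr)$, and differencing this tail probability in $s$ reproduces $\frac{s}{N}\prod_{j=1}^{s-1}\bigl(1-\tfrac{j}{N}\bigr)$. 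This birthday-problem computation explains the shape of the formula, while the combinatorial enumeration above is what establishes it for $M_N$ itself.
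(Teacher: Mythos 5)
Your argument is correct, and it is genuinely different from what the paper does: the paper offers no proof at all for this lemma, merely labelling it well known and pointing to \cite{arratia1992limit} and \cite{bollobas2001random}. Your enumeration --- split $\mathbb{Z}_N$ into the $s$ cyclic elements and the trees hanging off them, count $\binom{N}{s}$ choices of the cyclic set, $s!$ permutations on it, and $s\,N^{N-s-1}$ rooted forests with that prescribed root set, then divide by $N^N$ --- is the standard combinatorial derivation, and the algebra collapsing $\frac{N!}{(N-s)!}=N^{s}\prod_{j=1}^{s-1}\bigl(1-\tfrac{j}{N}\bigr)$ is carried out correctly. The one input you rightly flag as non-elementary is the generalized Cayley formula for forests with a prescribed root set; either of the justifications you name (the auxiliary-vertex bijection with a Pr\"ufer encoding, or the matrix--tree theorem) closes that gap, so the proof is self-contained modulo a classical fact. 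Your closing remark is also handled with appropriate care: the coincidence between the law of $M_N$ and the birthday-problem distribution of the $\rho$-length of a single orbit is a genuine (and not obvious) theorem, and you correctly present the deferred-decisions computation only as a consistency check on the formula rather than as a proof that $M_N$ has that law. What your route buys over the paper's is a verifiable, elementary derivation in place of a citation; what it costs is the need to import or prove the forest count.
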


The next result is adapted from Corollary 5.11 in \cite{arratia2003logarithmic}.
\begin{prop}\label{proposition: joint distribution of random permutation}
As $N \to \infty$,
$$\frac{1}{N}\left(Q_N^{(1)}, Q_N^{(2)}, \dots\right) \to \left(Q^{(1)}, Q^{(2)}, \dots\right), \text{ in distribution},$$
in $\Delta = \{(x_1, x_2, \dots)\subset (0, 1)^\infty: \sum_{i}x_i = 1\}$.
Here, for each $k$, $\left(Q^{(1)}, Q^{(2)}, \dots, Q^{(k)}\right)$ has density
$$q^{(k)}(x_1, \dots, x_k) = \frac{1}{x_1x_2 \cdots x_k} \left( 1 + \sum_{j = 1}^\infty \frac{(-1)^j}{j!} \int_{I_j(x)} \frac{dy_1 \cdots dy_j}{y_1 \dots y_j}\right),$$
on $\Delta$, where $I_j(x)$ is the set of $(y_1, \dots, y_j)$ that satisfy
$$\min\{y_1, \dots, y_j\} > x^{-1} \text{ and } y_1 + \dots + y_j < 1$$
and
$$
x = \frac{1 - x_1 - \dots - x_k}{x_k}.
$$
\end{prop}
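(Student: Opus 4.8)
The plan is to recognize the limit $(Q^{(1)}, Q^{(2)}, \dots)$ as the ranked Poisson--Dirichlet distribution with parameter $\theta = 1$, and to prove the statement in two stages: first the convergence in $\Delta$, which is classical, and then the explicit form of the finite-dimensional density $q^{(k)}$, which is where the real content lies. A useful preliminary observation, which guides the whole computation, is that the bracketed series in the statement is a closed form of the Dickman function. Writing $\rho$ for the Dickman function (so $\rho \equiv 1$ on $[0,1]$ and $x\rho'(x) = -\rho(x-1)$ for $x > 1$), the substitution $y_i = x^{-1} w_i$ turns the region $I_j(x)$ into $\{w_i > 1,\ \sum w_i < x\}$ and leaves $dy_i/y_i = dw_i/w_i$ invariant, so that the bracket equals
\[
\sum_{j\ge 0}\frac{(-1)^j}{j!}\int_{\substack{w_1,\dots,w_j>1\\ w_1+\dots+w_j<x}}\frac{dw_1\cdots dw_j}{w_1\cdots w_j}=\rho(x),\qquad x=\frac{1-x_1-\dots-x_k}{x_k}.
\]
Thus the claimed density is $q^{(k)}(x_1,\dots,x_k)=\rho(x)\,/\,(x_1\cdots x_k)$, and the task is to derive this expression and the accompanying weak convergence.

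For the convergence I would invoke the standard cycle-structure theory of uniform random permutations (the case $\theta=1$ of the Ewens sampling formula). The number of $j$-cycles of a random permutation of $\mathbb{Z}_N$ converges jointly, over finitely many $j$, to independent Poisson variables with means $1/j$, with sharp control from the Feller coupling; consequently the ranked normalized cycle lengths $\tfrac1N(Q_N^{(1)},Q_N^{(2)},\dots)$ converge in distribution in $\Delta$ (with the product topology on coordinates, together with the sum-to-one constraint) to the ranked points of $\mathrm{PD}(1)$. This is exactly the content imported from \cite{arratia2003logarithmic}; see also \cite{flajolet1989random, arratia1992limit}. At this stage the only thing left to identify is the joint law of the top $k$ ranked atoms of $\mathrm{PD}(1)$.

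For the density I would use the Poisson-process description of the limit: the normalized small cycle lengths converge to a Poisson process on $(0,1)$ with intensity $x^{-1}\,dx$, with counts over disjoint sets asymptotically independent, and with the constraint that the total mass equals $1$. The joint density that the $k$ largest atoms sit at $x_1 > \dots > x_k$ then factors as the product of the intensities at these $k$ prescribed locations, namely $\prod_{i=1}^k x_i^{-1}$, multiplied by the density that the residual mass $s = 1-\sum_{i}x_i$ is carried entirely by atoms in $(0,x_k]$. By the scale invariance of the intensity $x^{-1}\,dx$, this residual factor depends on $(s,x_k)$ only through $x = s/x_k$, and computing it by inclusion--exclusion over the (a.s.\ finite) number of residual atoms produces precisely the alternating series above, i.e.\ $\rho(x)$. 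This yields $q^{(k)} = \rho(x)/(x_1\cdots x_k)$. As a consistency check I would verify the case $k=1$, where the formula reduces to the Shepp--Lloyd density $\rho\big((1-x_1)/x_1\big)/x_1$ of the longest normalized cycle, and confirm that $q^{(k)}$ integrates to $1$ over $\Delta$.

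The main obstacle is making the residual computation rigorous, since conditioning the Poisson process on the total mass being exactly $1$ is conditioning on a null event, and one must also check that the normalizing constants (in particular the factor $e^{-\gamma}$ appearing in the Dickman density of the total mass) cancel so that $\rho$ appears with its standard normalization. I would resolve this by avoiding the heuristic conditioning altogether and instead passing to the limit from the exact finite-$N$ combinatorics: the number of permutations of an $m$-set with all cycles of length at most $\ell$ is $m!\,[z^m]\exp\!\big(\sum_{i\le \ell} z^i/i\big)$, and its saddle-point/Dickman asymptotics as $m,\ell\to\infty$ with $m/\ell \to x$ give the factor $\rho(x)$ directly, while the intensities $x_i^{-1}$ emerge from the Cauchy formula for the joint law of the largest cycle lengths. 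Matching this finite-$N$ expression to the limiting density, and checking that the $e^{-\gamma}$ constants cancel against the normalization of the conditioned law, is the delicate step; everything else is bookkeeping.
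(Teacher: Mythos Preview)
Your outline is mathematically sound: the limit is indeed the ranked $\mathrm{PD}(1)$ law, the bracketed alternating series is the Dickman function $\rho(x)$ via the substitution you give, and the density $\rho\bigl((1-x_1-\cdots-x_k)/x_k\bigr)/(x_1\cdots x_k)$ is the standard formula for the top $k$ ranked atoms. The two derivations you sketch (conditioned Poisson process with inclusion--exclusion, or finite-$N$ Cauchy formula plus Dickman asymptotics) are both legitimate routes to making this rigorous.

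However, the paper does not prove this proposition at all: it is stated as ``adapted from Corollary 5.11 in \cite{arratia2003logarithmic}'' and simply quoted. So your proposal is not so much a different approach as a reconstruction of a result the paper imports wholesale from the literature. What you gain is a self-contained argument and the useful recognition that the bracket is $\rho(x)$; what the paper gains is brevity, since the result is classical and the monograph \cite{arratia2003logarithmic} already handles the delicate conditioning and normalization issues you flag. If your goal is to match the paper, a one-line citation suffices; if your goal is an independent proof, your plan is correct but you should be aware that the ``delicate step'' you identify (cancellation of the $e^{-\gamma}$ constants) is exactly the content of the cited corollary, so you would essentially be reproving it.
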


\begin{lem} \label{lemma: bounded function}
For a fixed $N$, let $$h_N(x) = \frac{s}{\sqrt{N}}\prod_{j = 1}^{s - 1}\left(1 - \frac{j}{N} \right),$$ 
for $x \in \left( \frac{s - 1}{\sqrt{N}}, \frac{s}{\sqrt{N}} \right]$ and $s = 1, 2, \dots$.
Then $h_N(x) \le 4\max\left(x, 1\right)\exp\left(-x^2/2\right)$ for all $x > 0$, which is integrable on $(0, \infty)$.
Also, $h_N(x) \to x\exp\left( -x^2/2\right)$, as $N\to \infty$, for all $x \in (0, \infty)$.
\end{lem}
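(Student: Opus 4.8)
The plan is to read $h_N$ as the rescaled point probabilities of $M_N$ from Lemma~\ref{lemma: vertices on cycle of random mapping}: writing $s=s_N(x)$ for the unique integer with $\frac{s-1}{\sqrt N}<x\le \frac{s}{\sqrt N}$, we have $h_N(x)=\sqrt N\,\mathbb{P}(M_N=s)$, so the claim is that the piecewise-constant density of $M_N/\sqrt N$ converges pointwise to the Rayleigh density $x\exp(-x^2/2)$ while staying under a fixed integrable envelope. Two essentially independent estimates are needed, a pointwise limit and a uniform domination, and I would treat them separately. I would first record that $\prod_{j=1}^{s-1}(1-j/N)=0$ as soon as $s\ge N+1$ (the factor $j=N$ vanishes), so $h_N$ is supported on $(0,\sqrt N\,]$ and every bound below only needs checking for $1\le s\le N$.

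For the pointwise limit I would fix $x>0$ and take logarithms. Since $j\le s-1<x\sqrt N$ forces $j/N<x/\sqrt N\to 0$, for all large $N$ every factor has $j/N\le 1/2$, and on $[0,1/2]$ one has $\bigl|\log(1-u)+u\bigr|\le u^2$. Summing over $j$,
\[
\Bigl|\sum_{j=1}^{s-1}\log\Bigl(1-\tfrac{j}{N}\Bigr)+\sum_{j=1}^{s-1}\tfrac{j}{N}\Bigr|\le \sum_{j=1}^{s-1}\tfrac{j^2}{N^2}\le \tfrac{s^3}{3N^2}.
\]
The leading sum is $\frac{(s-1)s}{2N}=\tfrac12\cdot\frac{s-1}{\sqrt N}\cdot\frac{s}{\sqrt N}\to x^2/2$, while the error is $O(s^3/N^2)=O(1/\sqrt N)\to 0$ because $s\sim x\sqrt N$. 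Hence the product tends to $\exp(-x^2/2)$, and since $s/\sqrt N\to x$ as well, $h_N(x)\to x\exp(-x^2/2)$.

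For the envelope I would drop the logarithm and use $1-u\le e^{-u}$, giving $\prod_{j=1}^{s-1}(1-j/N)\le \exp\bigl(-\tfrac{(s-1)s}{2N}\bigr)$. From $x\le s/\sqrt N$ we get $s^2\ge x^2 N$, and from $s\le N$ we get $s/(2N)\le 1/2$, so
\[
\frac{(s-1)s}{2N}=\frac{s^2}{2N}-\frac{s}{2N}\ge \frac{x^2}{2}-\frac12 .
\]
For the prefactor, $s<x\sqrt N+1$ yields $s/\sqrt N<x+1/\sqrt N\le x+1\le 2\max(x,1)$ for $N\ge1$. Multiplying, $h_N(x)\le 2\max(x,1)\,e^{1/2}e^{-x^2/2}=2\sqrt e\,\max(x,1)e^{-x^2/2}\le 4\max(x,1)e^{-x^2/2}$, since $2\sqrt e<4$; integrability of the right-hand side on $(0,\infty)$ is immediate as $\exp(-x^2/2)$ decays superpolynomially.

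The one step that requires genuine care is the uniform bound in the large-$x$ (equivalently large-$s$) regime, where the crude estimate $s^2/(2N)\ge x^2/2$ leaves the uncontrolled defect $-s/(2N)$ in the exponent; the clean resolution is that $s$ never exceeds $N$ on the support of $h_N$, so this defect is at most $1/2$ and is absorbed into the constant. Everything else is routine.
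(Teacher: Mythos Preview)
Your proof is correct and follows essentially the same approach as the paper: both use $1-u\le e^{-u}$ to bound the product by $\exp\bigl(-\tfrac{(s-1)s}{2N}\bigr)$, then exploit $s\le N$ to absorb the $s/(2N)$ defect into the constant, and both handle the pointwise limit by Taylor-expanding $\log(1-j/N)$. The only cosmetic differences are that the paper bounds $s/\sqrt N$ by splitting into the cases $s=1$ and $s\ge 2$ (using $s\le 2(s-1)<2x\sqrt N$) whereas you use $s<x\sqrt N+1$ directly, and the paper writes the exponential defect as $\exp(s/(2N))<2$ rather than your sharper $e^{1/2}$; both routes land on the same constant $4$.
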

\begin{proof}
Since $h_N(x) = 0$ for $x > \sqrt{N}$, it suffices to show the inequality for $x \le \sqrt{N}$, i.e., $s \le N$.
Since
$1-\frac{j}{N} < \exp\left(-\frac{j}{N}\right) $
, it follows that $ \prod_{j = 1}^{s - 1}\left(1 - \frac{j}{N} \right) < \exp\left( -\frac{s^2}{2N}\right)\exp\left(\frac{s}{2N} \right) < 2\exp\left( -\frac{s^2}{2N}\right)$, for $s \le N$.
So, if $x \in \left( \frac{s-1}{\sqrt{N}}, \frac{s}{\sqrt{N}} \right]$, then
$$h_N(x) \le 2\frac{s}{\sqrt{N}} \exp\left( -\frac{x^2}{2} \right).$$
When $s = 1$, $s/\sqrt N\le 2$, while for $s\ge 2$, 
$s/\sqrt N\le 2(s-1)/\sqrt N\le 2x$, proving the 
inequality. To prove convergence, 
observe that 
\begin{align*} 
h_N(x) &= \frac{\lceil \sqrt{N}x\rceil}{\sqrt{N}}\prod_{j = 1}^{\lceil \sqrt{N}x\rceil - 1}\left(1 - \frac{j}{N}\right)\\
  & = \frac{\lceil \sqrt{N}x\rceil}{\sqrt{N}}\prod_{j = 1}^{\lceil \sqrt{N}x\rceil - 1}\exp\left\{ -\frac{j}{N} + \cO\left(\frac{j^2}{N^2}\right)\right\}\\
& = \frac{\lceil \sqrt{N}x\rceil}{\sqrt{N}} 
\exp\left[ -\frac{\lceil \sqrt{N}x\rceil \left( \lceil \sqrt{N}x\rceil - 1\right)}{2N} + \cO\left(\frac{1}{\sqrt{N}}\right)\right]\\
& \to x\exp\left({-x^2}/{2}\right),
\end{align*}
as $N \to \infty$.\end{proof}

\begin{thm}\label{theorem: joint distribution of random mapping}
Let $P_N^{(k)}$ be the $k$th longest cycle length in a random mapping from $\mathcal{R}_N$. Then 
$$N^{-1/2}\left(P_N^{(1)}, P_N^{(2)}, \dots, P_N^{(k)}\right)$$ converges to a nontrivial joint distribution, as $N \to \infty$.

\end{thm}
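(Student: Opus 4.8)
The plan is to reduce the cycle structure of a random mapping to that of a random permutation, and then fuse the two asymptotics already assembled above. The starting point is the decomposition recorded just before Lemma~\ref{lemma: vertices on cycle of random mapping}: conditioned on $\{M_N = s\}$ and on the identity of the $s$ elements lying on cycles, the restriction of a uniform $g \in \mathcal{R}_N$ to those elements is a uniform permutation of them. Since cycle lengths depend only on this cyclic part, the conditional law of $(P_N^{(1)}, \dots, P_N^{(k)})$ given $\{M_N = s\}$ coincides with that of $(Q_s^{(1)}, \dots, Q_s^{(k)})$, the $k$ longest cycle lengths of a uniform permutation of $s$ elements. Thus, for any bounded continuous $\phi \colon \R^k \to \R$, I would write
\[
\mathbb{E}\,\phi\!\left(N^{-1/2}(P_N^{(1)}, \dots, P_N^{(k)})\right) = \sum_{s=1}^N \mathbb{P}(M_N = s)\, \mathbb{E}\,\phi\!\left(N^{-1/2}(Q_s^{(1)}, \dots, Q_s^{(k)})\right),
\]
and then rewrite the scaling as $N^{-1/2} = (s/\sqrt{N})\cdot s^{-1}$ so as to separate the number of cyclic points from the permutation profile.

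Next I would identify the two limiting ingredients. For the cyclic part, observe that $\mathbb{P}(M_N = s) = N^{-1/2}\,h_N(s/\sqrt{N})$ with $h_N$ as in Lemma~\ref{lemma: bounded function}, so $h_N$ is precisely the step density of $M_N/\sqrt{N}$; by that lemma it is dominated by an integrable function and converges pointwise to the Rayleigh density $x\,e^{-x^2/2}$, whence $M_N/\sqrt{N}$ converges in distribution to a Rayleigh variable $R$. For the profile, Proposition~\ref{proposition: joint distribution of random permutation} gives $s^{-1}(Q_s^{(1)}, \dots, Q_s^{(k)}) \to (Q^{(1)}, \dots, Q^{(k)})$ as $s \to \infty$. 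The candidate limit is therefore the product $R\cdot(Q^{(1)}, \dots, Q^{(k)})$ with $R$ independent of $(Q^{(1)}, \dots, Q^{(k)})$; this vector is nondegenerate, which is exactly the asserted nontriviality.

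To make this precise I would establish joint convergence of the pair $\big(M_N/\sqrt{N},\, M_N^{-1}(P_N^{(1)}, \dots, P_N^{(k)})\big)$ to $\big(R, (Q^{(1)}, \dots, Q^{(k)})\big)$ with \emph{independent} coordinates, and then apply the continuous mapping theorem to the multiplication map $(r, q) \mapsto r\,q$. Testing against a product $\phi_1(M_N/\sqrt{N})\,\phi_2\big(M_N^{-1}(P_N^{(1)}, \dots, P_N^{(k)})\big)$ and conditioning on $M_N = s$, the conditional expectation factors as $\phi_1(s/\sqrt{N})\, g_s$, where $g_s := \mathbb{E}\,\phi_2\big(s^{-1}(Q_s^{(1)}, \dots, Q_s^{(k)})\big) \to g_\infty := \mathbb{E}\,\phi_2(Q^{(1)}, \dots, Q^{(k)})$. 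The full expectation then reads $\sum_s N^{-1/2} h_N(s/\sqrt{N})\,\phi_1(s/\sqrt{N})\, g_s$, a Riemann sum of mesh $N^{-1/2}$.

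The crux, and the step I expect to be the main obstacle, is interchanging the limit $g_s \to g_\infty$ with this sum, since the summation index $s$ and the integration variable $x = s/\sqrt{N}$ are coupled. I would split the sum at a threshold $s_0$: the range $s < s_0$ contributes at most $\mathbb{P}(M_N < s_0) = \cO(s_0^2/N) \to 0$, while on $s \ge s_0$ one replaces $g_s$ by $g_\infty$ at the cost of $\sup_{s\ge s_0}|g_s - g_\infty|$, which is arbitrarily small once $s_0$ is large. The surviving sum $g_\infty \sum_{s} N^{-1/2}h_N(s/\sqrt{N})\,\phi_1(s/\sqrt{N})$ converges, by the domination and pointwise convergence of $h_N$ in Lemma~\ref{lemma: bounded function} together with dominated convergence, to $g_\infty \int_0^\infty x\,e^{-x^2/2}\phi_1(x)\,dx = g_\infty\, \mathbb{E}\,\phi_1(R)$. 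Sending $s_0 \to \infty$ yields the factorized limit $\mathbb{E}\,\phi_1(R)\cdot\mathbb{E}\,\phi_2(Q^{(1)}, \dots, Q^{(k)})$, which delivers both the joint convergence and the independence, and hence the theorem.
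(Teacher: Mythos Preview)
Your proposal is correct and follows essentially the same route as the paper: condition on $M_N=s$ so that the cycle lengths become those of a uniform permutation on $s$ points, combine Proposition~\ref{proposition: joint distribution of random permutation} with the Rayleigh asymptotics of $M_N/\sqrt{N}$ from Lemma~\ref{lemma: bounded function}, and justify the limit interchange by domination. The only cosmetic difference is that the paper packages the whole sum into a single step function $\widetilde h_N$ and applies dominated convergence once, rather than first proving joint convergence of $\bigl(M_N/\sqrt{N},\,M_N^{-1}(P_N^{(1)},\dots,P_N^{(k)})\bigr)$ and then invoking continuous mapping; your threshold split at $s_0$ is a valid substitute for that single dominated-convergence step.
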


\begin{proof}
Conditioning on the event that a set $S\subset \Z_N$ is exactly 
the set of elements of $\Z_N$ that belong to cycles, 
the random mapping is a random permutation of $S$.  
It follows that for any bounded continuous function $\phi : \mathbb{R}^k \to \mathbb{R}$,
\begin{align*}
\mathbb{E}\left[ \phi \left( \frac{P_N^{(1)}}{\sqrt{N}} , \dots, \frac{P_N^{(k)}}{\sqrt{N}} \right)\right]&= 
\sum_{s = 1}^N\mathbb{E}\left[ \phi\left( \frac{P_N^{(1)}}{\sqrt{N}}, \dots, \frac{P_N^{(k)}}{\sqrt{N}} \right) \biggm| M_N = s\right]\mathbb{P}\left(M_N = s \right) \\
& = \sum_{s = 1}^N\mathbb{E}\left[\phi \left( \frac{Q_s^{(1)}}{\sqrt{N}} , \dots, \frac{Q_s^{(k)}}{\sqrt{N}} \right)\right]   \frac{s}{N}\prod_{j = 1}^{s - 1}\left(1 - \frac{j}{N} \right)\\
& = \sum_{s = 1}^N\mathbb{E}\left[\phi \left( \frac{Q_s^{(1)}}{s}\frac{s}{\sqrt{N}} , \dots, \frac{Q_s^{(k)}}{s}\frac{s}{\sqrt{N}} \right)\right]   \frac{s}{N}\prod_{j = 1}^{s - 1}\left(1 - \frac{j}{N} \right).
\end{align*}
Define $\widetilde{h}_N : \mathbb{R} \to \mathbb{R}$  
$$
\widetilde{h}_N(x) = \mathbb{E}\left[\phi \left( \frac{Q_s^{(1)}}{s}\frac{s}{\sqrt{N}} , \dots, \frac{Q_s^{(k)}}{s}\frac{s}{\sqrt{N}} \right)\right]   \frac{s}{\sqrt{N}}\prod_{j = 1}^{s - 1}\left(1 - \frac{j}{N} \right),
$$
for $x \in \left(\frac{s-1}{\sqrt{N}}, \frac{s}{\sqrt{N}}\right]$, $s = 1, 2, \dots$ By Lemma~\ref{lemma: bounded function} 
and Proposition~\ref{proposition: joint distribution of random permutation}, $\widetilde{h}_N$ is bounded by an integrable 
function and, for every fixed $x\ge 0$, 
\begin{align*}
\lim_{n\to\infty}\widetilde{h}_N(x)&=
\lim_{N \to \infty}
\mathbb{E}\left[\phi \left( \frac{Q_{\lceil \sqrt{N}x \rceil}^{(1)}}{{\lceil \sqrt{N}x \rceil}}\frac{{\lceil \sqrt{N}x \rceil}}{\sqrt{N}} , \dots, \frac{Q_{\lceil \sqrt{N}x \rceil}^{(k)}}{{\lceil \sqrt{N}x \rceil}}\frac{{\lceil \sqrt{N}x \rceil}}{\sqrt{N}} \right)\right] x\exp\left(-\frac{x^2}{2}\right)\\
&=\mathbb{E} \left[\phi \left( Q^{(1)}x, \dots, Q^{(k)}x\right) \right] x\exp\left( - \frac{x^2}{2}\right).
\end{align*}
Then, 
\begin{align*}
&\lim_{N \to \infty}\mathbb{E}\left[ \phi \left( \frac{P_N^{(1)}}{\sqrt{N}} , \dots, \frac{P_N^{(k)}}{\sqrt{N}} \right)\right] \\
& = \lim_{N \to \infty}\int_{0}^{\infty} \widetilde{h}_N(x) dx\\
& = \int_0^\infty \mathbb{E} \left[\phi \left( Q^{(1)}x, \dots, Q^{(k)}x\right) \right] x\exp\left( - \frac{x^2}{2}\right) dx,
\end{align*}
by dominated convergence theorem.
\end{proof}

As a consequence, we obtain the following convergence in distribution.

\begin{lem}\label{lemma: convergence of maximum}
Let $T_j^{\prime}$'s, for $j = 1, 2, \dots$, be i.i.d. with 
$$\mathbb{P}\left(T_j^{\prime} = d\right) = \frac{\varphi(d)}{\sigma - \varphi(\sigma)},$$
for all divisors $d \neq \sigma$ of $\sigma$, and independent 
of the random mapping. Let 
$$
D_N^{(k)}=\max\left\{ P_N^{(k)}\cdot\sigma, P_N^{(j)}\cdot T_j^{\prime}: j = 1, 2, \dots, k - 1\right\}.$$
Then 
$N^{-1/2} D_N^{(k)}$
converges to a nontrivial distribution, for any $k$ and 
$\sigma$.
\end{lem}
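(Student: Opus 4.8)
The plan is to deduce the statement from the joint convergence already established in Theorem~\ref{theorem: joint distribution of random mapping} together with the continuous mapping theorem. First I would observe that, since each $T_j^\prime$ takes values in the finite set of proper divisors of $\sigma$ and $\sigma$ itself is a fixed constant, the random vector
$$
\frac{1}{\sqrt N}\left(P_N^{(1)} T_1^\prime,\ \dots,\ P_N^{(k-1)} T_{k-1}^\prime,\ P_N^{(k)}\sigma\right)
$$
is obtained from $N^{-1/2}\bigl(P_N^{(1)},\dots,P_N^{(k)}\bigr)$ by componentwise multiplication by the independent discrete weights $(T_1^\prime,\dots,T_{k-1}^\prime,\sigma)$. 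Because the $T_j^\prime$ are independent of the random mapping and of $N$, the joint law of the scaled cycle lengths together with the weights converges: on the product space $\mathbb{R}^k\times\Xi_\sigma^{k-1}$, the pair
$$
\left(N^{-1/2}\bigl(P_N^{(1)},\dots,P_N^{(k)}\bigr),\ (T_1^\prime,\dots,T_{k-1}^\prime)\right)
$$
converges in distribution to the product of the limit from Theorem~\ref{theorem: joint distribution of random mapping} and the fixed law of the $T_j^\prime$.

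Next I would apply the continuous mapping theorem to the map $\Psi:\mathbb{R}^k\times\Xi_\sigma^{k-1}\to\mathbb{R}$ sending
$$
\bigl((p_1,\dots,p_k),(t_1,\dots,t_{k-1})\bigr)\longmapsto \max\{p_k\sigma,\ p_j t_j: j=1,\dots,k-1\}.
$$
Since $\Psi$ is continuous (a maximum of finitely many continuous coordinate products, and the discrete weights only enter as bounded multiplicative constants), it follows that $N^{-1/2}D_N^{(k)}=\Psi\bigl(N^{-1/2}(P_N^{(1)},\dots,P_N^{(k)}),(T_1^\prime,\dots,T_{k-1}^\prime)\bigr)$ converges in distribution to $\Psi$ applied to the limiting pair, namely to
$$
\max\left\{P^{(k)}\sigma,\ P^{(j)}T_j^\prime: j=1,\dots,k-1\right\},
$$
where $(P^{(1)},\dots,P^{(k)})$ is the limit of the scaled longest mapping cycles and the $T_j^\prime$ are the independent weights. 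This limit is a well-defined random variable.

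Finally I would verify nontriviality of the limit. This is immediate: with positive probability the weight $T_1^\prime$ is positive (any proper divisor $d\ge 1$ of $\sigma$ gives $\varphi(d)>0$, so each admissible value is attained with positive probability), and the limiting longest cycle length $P^{(1)}$ has a nondegenerate absolutely continuous law inherited from Theorem~\ref{theorem: joint distribution of random mapping}; hence the maximum is a genuine, non-constant random variable. I expect no serious obstacle here, as the argument is a routine assembly of pieces already in place; the only point requiring a little care is making the joint convergence of the cycle lengths \emph{with} the independent discrete weights rigorous, which follows because independence of the weights from the mapping lets one factor the limiting law, so the continuous mapping theorem applies to the genuine weak limit on the product space.
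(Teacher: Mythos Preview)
Your proposal is correct and follows essentially the same approach as the paper: the paper's proof simply notes that the $T_j^\prime$ do not depend on $N$, so the vector $N^{-1/2}\bigl(P_N^{(1)}T_1^\prime,\dots,P_N^{(k-1)}T_{k-1}^\prime,P_N^{(k)}\sigma\bigr)$ converges in distribution, and then concludes by continuity of the maximum. Your write-up is a more detailed and careful version of exactly this argument.
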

\begin{proof}
Note that $T_j^\prime$'s do not depend on $N$.
So the vector $N^{-1/2}\left(P_N^{(1)} T_1^\prime, \dots, P_N^{(k-1)} T_{k - 1}^\prime, P_N^{(k)} \sigma \right)$ converges in distribution as $N \to \infty$.
The conclusion follows by continuity. 
\end{proof}

In the sequel, we denote by $D^{(k)}$ a generic random variable with the limiting distribution of $N^{-1/2} D_N^{(k)}$.

\section{The main results} \label{section: main result}
\subsection{The case $\sigma = 1$}
In this case, a DEC does not have cemetery vertices thus our problem simply reduces to a random mapping problem.
To be precise, 

\begin{equation}\label{equation: sigma 1}
\frac{X_{1,n}}{n^{1/2}} = \frac{L_{1}}{n^{1/2}} =_d \frac{P_n^{(1)}}{n^{1/2}}, 
\end{equation}
which converges in distribution by Theorem \ref{theorem: joint distribution of random mapping}.
The first equality in (\ref{equation: sigma 1}) holds because a cycle in a DEC cannot be expanded when $\sigma = 1$ and the second equality in (\ref{equation: sigma 1}) is true because there are no cemetery states for $\sigma = 1$. 

For a general $\sigma$, the problem may be handled similarly to the case of $\sigma = 1$ only after eliminating the cemetery vertices.
As a consequence, we must determine the behavior of 
$C_n=C_{\sigma, n}$ from Section~\ref{section:intro}, which 
we may reinterpret as the random variable representing the number of non-cemetery vertices in a DEC of spatial period $\sigma$.
The strategy is as follows: construct the random DEC via 
a sequential algorithm that naturally provides a system of
stochastic difference equations for the number of non-cemetery classes with $C_n$ related to a hitting time; then show that the solution of the stochastic difference equations, appropriately scaled, converges to a diffusion, giving the asymptotic 
behavior of $C_n$. 

\subsection{Construction of a random DEC and the difference equations}\label{section:difference}
 
\begin{algorithm} 
\DontPrintSemicolon
$C_A \gets V_p \cup \{v_0\}$ or $V_p$, if $v_0$ does not exist\tcp*[r]{Active cemetery vertices}
$C_P \gets \emptyset$\tcp*[r]{Passive cemetery vertices}
$C_N \gets V_a$\tcp*[r]{Non-cemetery vertices}
$E \gets \emptyset$\tcp*[r]{Set of arcs}
$k \gets 0$\;
$Y_0 \gets |C_N|$\;
$Z_0 \gets |C_A|$\;

\If(\tcp*[f]{If $v_0$ exists}){$v_0 \in C_A$}{
	$C_A \gets C_A \setminus \{v_0\}$\;
	$C_P \gets C_P \cup \{v_0\}$\tcp*[r]{Make it passive}
	Let $\beta_0\sim \Bin\left(Y_0, \frac{\iota}{n^\sigma}\right)$\;
	Pick random $v_1,\dots, v_{\beta_0}$ in $C_N$\tcp*[r]{
	Select non-cemetery vertices that map to $v_0$}
	\For{$j = 1, \dots, \beta_0$}{
	$E \gets E \cup \{\overrightarrow{v_jv_0}\}$\tcp*[r]{Add the arcs to the set of arcs}
	$C_A \gets C_A \cup \{v_j\}$\tcp*[r]{Make the vertices active cemetery}
	$C_N \gets C_N \setminus \{v_j\}$\;
	}
	$Y_0 \gets |C_N|$ \tcp*[r]{Update the number of temporary  non-cemetery vertices}
	$Z_0 \gets |C_A|$ \tcp*[r]{Update the number of active cemetery vertices}
	$k \gets 1$
      }

\While(\tcp*[f]{When $C_A=\emptyset$, the non-cemetery vertices are determined}){$|C_A| > 0$}
{
	Pick a random $v \in C_A$\tcp*[r]{Pick a random active cemetery vertex $v$}
	$C_A \gets C_A \setminus \{v\}$\;
	$C_P \gets C_P \cup \{v\}$\tcp*[r]{Make $v$ passive}
	Let $\beta_k\sim \Bin\left(Y_k, \frac{1}{Y_k + Z_k}\right)$\; 
	Pick random $v_1,\dots, v_{\beta_k}$ in $C_N$\tcp*[r]{
	Select non-cemetery vertices that map to $v$}
		\For{$j = 1, \dots, \beta_k$}{
			$E \gets E \cup \{\overrightarrow{v_jv}\}$\tcp*[r]{Add the arcs to the set of arcs}
			$C_A \gets C_A \cup \{v_j\}$\tcp*[r]{Make the vertices active cemetery}
			$C_N \gets C_N \setminus \{v_j\}$\;
		}
	$Y_k \gets |C_N|$ \tcp*[r]{Update the number of non-cemetery vertices}
	$Z_k \gets |C_A|$ \tcp*[r]{Update the number of active cemetery vertices}
	$k \gets k + 1$\;
}

\For(\tcp*[f]{Assign arcs among non-cemetery vertices.}){$v \in C_N$}{
	Pick a $u$ uniformly from $C_N$\;
	$E = E\cup \{ \overrightarrow{vu}\}$\;
}
  \caption{Construction of a random DEC}\label{algorithm: generating random DEC}
\end{algorithm}

Recall the notation from Section \ref{section: Directed graph on equivalence classes} and Lemma~\ref{lemma: uniform of probability}. Algorithm~\ref{algorithm: generating random DEC}
formally describes a way 
of generating a random DEC that sequentially adds cemetery vertices until 
all are gathered. The procedure specifies the evolution of the set of cemetery vertices, which are separated into active and passive ones, initially all active. 
In the $k$th step ($k = 0, 1, \dots$), we pick an active cemetery vertex $v$, making it passive.
We also select $\beta_k$ non-cemetery vertices that map to $v$, where $\beta_k \sim \Bin\left(Y_k, \frac{1}{Y_k + Z_k}\right)$.
(If $k = 0$ and $v_0$ exists, the initial pick is $v_0$ and the probability changes accordingly.) 
This distribution is justified by Lemma~\ref{lemma: uniform of probability}, i.e., all non-cemetery vertex share the same probability of mapping into a vertex that is not passive cemetery.
We make those $\beta_k$ vertices active cemetery, because each one of them has the ability to ``absorb'' non-cemetery vertices (thus is active), while itself maps into a periodic class of a lower period along a directed path (thus is cemetery).
The above procedure determines all cemetery classes in the \textbf{while} loop. 
In the final \textbf{for} loop, we assign a unique target uniformly for each non-cemetery vertex. 
Note that $Y_k$ and $Z_k$ are the numbers of non-cemetery and active cemetery vertices at the end of $k$th iteration of the \textbf{while} loop.

Now, letting $\Delta Y_k = Y_{k + 1} - Y_{k}$, and $\Delta Z_k = Z_{k + 1} - Z_{k}$, we obtain the stochastic difference equation for $k$ such that $Z_k \ge 0$,
\begin{equation} \label{equation: SdE}
\begin{cases}
\Delta Y_k = -1 - \Delta Z_k = - \beta_{k}\\
\Delta Z_k = \beta_{k} - 1 = \frac{Y_k}{Y_k + Z_k} - 1 + \Delta B_k\sqrt{\frac{Y_k}{Y_k + Z_k}\left(1 - \frac{1}{Y_k + Z_k}\right)}
\end{cases},
\end{equation}
where $\beta_{k}$'s are independent and
$$\beta_{k} \sim \Bin\left(Y_k, \frac{1}{Y_k + Z_k}\right),$$ 
for $k = 1, 2, \dots$,
thus
$$
\Delta B_k = \frac{\beta_{k} - Y_k/(Y_k + Z_k)}{\sqrt{\frac{Y_k}{Y_k + Z_k}\left(1 - \frac{1}{Y_k + Z_k}\right)}}.
$$
For the initial condition, we have
$$
Y_0 = 
\begin{cases}
- B_{0} + \frac{T(\sigma, n)}{\sigma}, & \text{if } \iota = 0 \\ 
- B_{1} + \frac{T(\sigma, n)}{\sigma}, & \text{if } \iota \neq 0 \\ 
\end{cases},
$$
and
$$
Z_0 = 
\begin{cases}
B_{0} - 1 + \lfloor \frac{n^\sigma - T(\sigma, n)}{\sigma} \rfloor, & \text{if } \iota = 0\\ 
B_{1} - 1 + \lfloor \frac{n^\sigma - T(\sigma, n)}{\sigma} \rfloor, & \text{if } \iota \neq 0\\ 
\end{cases},
$$
where $B_{0} \sim \Bin\left(\frac{T(\sigma, n)}{\sigma}, \frac{\sigma}{n^\sigma}\right)$ and $B_{\text{1}} \sim \Bin\left(\frac{T(\sigma, n)}{\sigma}, \frac{\iota}{n^\sigma}\right)$.
To define the processes for \textit{all} $k = 0, 1, \dots, N - 1$, we stop $Y_k$ and $Z_k$ once $Z_k$ hits zero.

\subsection{Convergence to a diffusion}\label{section:conv}

Let 
$N = |V_\sigma| = n^{\sigma}/\sigma + \cO(n^{\sigma/2})$ be the total number of vertices. 
We scale $Y_k$ and $Z_k$ by dividing by $N$ and $\sqrt{N}$, respectively.
To be more precise, consider the 2-dimensional process $\xi_{k, N} = \left(\xi_{k, N}^{(1)}, \xi_{k, N}^{(2)}\right)$, for $k = 0, \dots, N - 1$,
where $\xi_{k, N}^{(1)} = Y_k/N$ is the scaled number of non-cemetery states and $\xi_{k, N}^{(2)} = Z_k/\sqrt{N}$ is the scaled number of active cemetery states.
For a fixed $\xi_{k, N}$, let $\tau = \tau\left(\xi_{k, N}\right) = \inf\{k/N: \xi_{k, N}^{(2)} \le 0\}$ be the hitting time of zero for the second 
coordinate.  
We are thus interested in this question: \textit{when the number of active cemetery vertices is zero, what is the limiting distribution of proportion of non-cemetery vertices?}
In other words, what is $\lim \mathbb{P}\left(\xi_{\tau}^{(1)} \le x\right)$, for $x \in (0, 1)$, as $N \to \infty$? 
We will prove the following 
result, which is a restatement of Theorem~\ref{theorem:brbr}.

\begin{restatable}{thm}{minushittingtime} \label{theorem: 1 minus hitting time} 
As $N \to \infty$, $\xi_{\tau}^{(1)} \to 1 - \tau(\eta)$ in distribution, where $\tau(\eta) = \inf\{t: \eta(t) = 0\}$ and $\eta(t)$ satisfies
$$\eta(t) =  p(\sigma) - \int_0^t \frac{\eta(s)}{1 - s} ds - B_t,$$
where $p(\sigma) = 1/\sqrt{\sigma}$ if $\sigma$ is even and $p(\sigma) = 0$, otherwise.
In particular, when $\sigma$ is even, $\xi_{\tau}^{(1)}$ converges to a non-trivial limiting distribution, while when $\sigma$ is odd, $\xi_{\tau}^{(1)} \to 1$ in probability.
\end{restatable}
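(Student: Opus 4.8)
The plan is to reduce the whole statement to a one-dimensional diffusion approximation for the second coordinate, together with the convergence of its first passage time to $0$. The key structural observation is a conservation law hidden in \eqref{equation: SdE}: since $\Delta Y_k=-\beta_k$ and $\Delta Z_k=\beta_k-1$, we have $\Delta(Y_k+Z_k)=-1$, so $Y_k+Z_k=Y_0+Z_0-k$ \emph{deterministically}. Writing $\tau_N$ for the first $k/N$ at which $Z_k=0$ — and noting that $\Delta Z_k\ge -1$, so $Z_k$ reaches $0$ exactly, with no overshoot — at that instant $Y_{N\tau_N}=Y_0+Z_0-N\tau_N$, whence
$$\xi_\tau^{(1)}=\frac{Y_0+Z_0}{N}-\tau_N\qquad\text{exactly.}$$
Consequently the first coordinate needs no fluctuation analysis: once $(Y_0+Z_0)/N\to1$ and $\tau_N$ converges in distribution, Slutsky's theorem yields $\xi_\tau^{(1)}\Rightarrow 1-\tau(\eta)$. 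The limit $(Y_0+Z_0)/N\to1$ is immediate because the $B_0,B_1$ terms cancel in the sum $Y_0+Z_0$, leaving $n^\sigma/\sigma+\cO(n^{\sigma/2})=N+o(N)$ by Lemma~\ref{lemma-T}. The same lemma gives the initial condition $\xi_0^{(2)}=Z_0/\sqrt N\to p(\sigma)$: for even $\sigma$, $n^\sigma-T(\sigma,n)\sim n^{\sigma/2}$ gives $Z_0/\sqrt N\to1/\sqrt\sigma$, while for odd $\sigma$, $n^\sigma-T(\sigma,n)=o(n^{\sigma/2})$ gives $Z_0/\sqrt N\to0$; the $\cO(1)$ binomial correction is negligible after dividing by $\sqrt N$.

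Everything thus hinges on $\xi_N^{(2)}\Rightarrow\eta$ followed by $\tau_N\Rightarrow\tau(\eta)$. For the diffusion approximation I would use the martingale decomposition already isolated in \eqref{equation: SdE}. The conditional drift of $Z_k$ is $Y_k/(Y_k+Z_k)-1=-Z_k/(Y_0+Z_0-k)$, so the scaled drift is
$$N\,\mathbb{E}\big[\Delta\xi_{k,N}^{(2)}\mid\mathcal{F}_k\big]=-\frac{\xi_{k,N}^{(2)}}{(Y_0+Z_0)/N-k/N},$$
whose limit is $-\xi^{(2)}/(1-t)$, the conservation law having pinned the denominator to the deterministic value $1-t$ — so that, remarkably, no separate control of the $\xi^{(1)}$-fluctuations enters the drift. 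The conditional variance $\frac{Y_k}{Y_k+Z_k}(1-\frac1{Y_k+Z_k})\to1$ gives $N\,\mathrm{Var}(\Delta\xi_{k,N}^{(2)}\mid\mathcal{F}_k)\to1$, and since $\beta_k$ is of $\mathrm{Poisson}(1)$ size one has $\max_k|\Delta\xi_{k,N}^{(2)}|=\cO(\log N/\sqrt N)\to0$, so a Lindeberg condition holds. These are precisely the hypotheses of the standard Markov-chain-to-diffusion (martingale functional CLT) theorem, e.g. Ethier--Kurtz, which provides tightness and identifies the limit as the unique solution of $d\eta=-\tfrac{\eta}{1-t}\,dt-dB_t$, $\eta(0)=p(\sigma)$ — the Brownian bridge from $p(\sigma)$ to $0$. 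This convergence and the well-posedness of the SDE hold on each $[0,1-\varepsilon]$, where the drift coefficient is bounded.

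The delicate step — and the one I expect to be the main obstacle — is upgrading $\xi_N^{(2)}\Rightarrow\eta$ to $\tau_N\Rightarrow\tau(\eta)$, for which three points must be handled. First, the drift $-\eta/(1-t)$ is singular at $t=1$, so the convergence is only on $[0,1-\varepsilon]$; I must rule out that the discrete passage time leaks into a neighbourhood of $1$, which I would do by bounding $\mathbb{P}(\tau_N>1-\varepsilon)$ uniformly in $N$ and invoking $\tau(\eta)<1$ a.s. (the bridge, run backward from its pinned endpoint, returns to $0$ strictly before time $1$). Second, the first-passage functional $\omega\mapsto\inf\{t:\omega(t)=0\}$ is not continuous in general, but it is continuous at paths crossing $0$ transversally; since at $\tau(\eta)$ the bridge is locally a Brownian motion and immediately takes both signs, $\tau(\eta)$ is a.s. a continuity point and the continuous mapping theorem applies. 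Combining these with $(Y_0+Z_0)/N\to1$ gives $\xi_\tau^{(1)}\Rightarrow1-\tau(\eta)$ for even $\sigma$, a nontrivial law since $\tau(\eta)\in(0,1)$ a.s.

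Finally, the odd case is simpler and I would treat it directly. Here $p(\sigma)=0$, so $Z_0=o(\sqrt N)$; since $Z_k$ has nonpositive drift and bounded-variance increments, a comparison with a mean-zero random walk started at height $Z_0$ shows $\mathbb{P}(\tau_N>\varepsilon)\lesssim Z_0/\sqrt{\varepsilon N}\to0$ for each fixed $\varepsilon>0$, so its first passage to $0$ occurs within $o_P(N)$ steps. Hence $\tau_N\to0$ in probability, and $\xi_\tau^{(1)}=(Y_0+Z_0)/N-\tau_N\to1$ in probability, as claimed.
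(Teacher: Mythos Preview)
Your proposal is correct and takes a genuinely different route from the paper. The key new ingredient is the conservation law $\Delta(Y_k+Z_k)=-1$, which you exploit to write $\xi_\tau^{(1)}=(Y_0+Z_0)/N-\tau_N$ \emph{exactly}; this collapses the problem to a one-dimensional diffusion approximation for $\xi^{(2)}$ alone, with the deterministic denominator $Y_k+Z_k=Y_0+Z_0-k$ already built into the drift. The paper never isolates this identity: it treats the full two-dimensional process, introduces a doubly-truncated intermediary $\txi$ with parameters $\delta,M$ (via $g(x)=\max(x,\delta)$ and $h(x)=\min(\max(x,-M),M)$), verifies Kushner's conditions term by term (Lemma~\ref{lemma: convergence of process}), then peels off the $M$-truncation (Lemma~\ref{lemma: hxi_t to heta_t}) and finally the $\delta$-truncation in the proof of the theorem itself. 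Your reduction buys real economy --- no bookkeeping for the first coordinate, no $M$-truncation needed, and the drift of $\xi^{(2)}$ is manifestly $-\xi^{(2)}/(1-t)$ without first controlling fluctuations of $\xi^{(1)}$ --- while the paper's approach is more mechanical and makes every estimate explicit. Both proofs must still handle the same two delicate points, the singularity of the drift at $t=1$ and the a.s.\ continuity of the first-passage functional for the bridge; your treatment of these is along the same lines as the paper's Lemma~\ref{lemma: continuity of functional} and the final $\delta\downarrow0$ argument, just stated more tersely. Your separate direct argument for odd $\sigma$ via a supermartingale comparison is also a nice shortcut that the paper does not take.
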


Our strategy in proving Theorem~\ref{theorem: 1 minus hitting time} is to verify the conditions in \cite{kushner1974weak} for a solution of a stochastic difference equation to converge to a diffusion.
However, trying to prove this directly for $\xi_{k, N}$ runs 
into uniform continuity and boundedness problems, so we need an 
intermediate process $\txi_{k, N}$.
For a fixed $N$, we define the stochastic difference equations of $\txi_{k, N} = \left(\txi_{k, N}^{(1)}, \txi_{k, N}^{(2)}\right)$ by giving 
$\Delta \txi_{k, N}^{(i)}=\txi_{k+1, N}^{(i)}- \txi_{k, N}^{(i)}$, $i=1,2$, as follows
\begin{equation}\label{eq-difference}
\begin{cases}
\Delta \txi_{k, N}^{(1)} = - \frac{1}{N} - \Delta\txi_{k, N}^{(2)} \,\frac{1}{\sqrt{N}}\\
\Delta \txi_{k, N}^{(2)} = - \frac{1}{N}\widetilde{\Psi} + \frac{1}{\sqrt{N}} \Delta \tb 
\,\tUpsilon
\end{cases}.
\end{equation}
The quantities $\widetilde{\Psi}$, $\tUpsilon$, and 
$\Delta \tb$  depend on additional parameters $\delta \ge 0$ and $M \ge 0$. 
Define  
\begin{equation}\label{eq-gh}
g(x) = \max(x, \delta)\text{ and } 
h(x) =
\min(\max(x,-M),M).
\end{equation}
Then
\begingroup
\allowdisplaybreaks
\begin{align*}
&\widetilde{\Psi} = \frac{h\left(\txi_{k, N}^{(2)}\right)}{g\left(\txi_{k, N}^{(1)}\right) + h\left(\txi_{k, N}^{(2)}\right)/\sqrt{N}},\\
&\tUpsilon = \sqrt{\widetilde{\Phi}\left( 1 -  \frac{1}{\lfloor N g\left(\txi_{k, N}^{(1)}\right)\rfloor +\sqrt{N} h\left(\txi_{k, N}^{(2)}\right)}\right)},\\
&\Delta \tb = \frac{\widetilde{\beta}_k - \widetilde{\Phi}}{\tUpsilon},\\
&\widetilde{\beta}_k \sim \Bin\left(\lfloor N g\left(\txi^{(1)}_{k, N}\right) \rfloor, \frac{1}{\lfloor N g\left(\txi^{(1)}_{k, N}\right) \rfloor + \sqrt{N}h\left(\txi^{(2)}_{k, N}\right)}\right),\\
&\widetilde{\Phi} = \frac{\lfloor Ng\left(\txi_{k, N}^{(1)}\right)\rfloor }{\lfloor N g\left(\txi_{k, N}^{(1)}\right)\rfloor  + \sqrt{N} h\left(\txi_{k, N}^{(2)}\right)} = \mathbb{E}\widetilde{\beta}_k.
\end{align*}
\endgroup

We view the $\widetilde{\Psi}$, $\tUpsilon$, and 
$\widetilde{\Phi}$ (and their relatives defined later)
alternatively as the expressions in $\txi_{k,N}$ or functions
from $\R^2$ to $\R$,  which use $\txi_{k,N}$ as values of their
independent arguments. 
When $N > (M/\delta)^2$, the denominators in the above expressions are positive, and thus the process is automatically defined for $k = 1, \dots, N - 1$.  
When $\delta = 0$ and $M = \infty$, the difference equation (\ref{eq-difference})
is exactly the difference equation for 
$\left(\xi^{(1)}_{k, N}, \xi^{(2)}_{k, N}\right)$, when $\xi^{(2)}_{k, N} \ge 0$. 
We assume $\delta > 0$ (but small) and $M < \infty$ (but large) for 
the rest of this section. The initial 
conditions for $\txi_{k, N}$ and ${\xi}_{k, N}$
agree: $\txi_{0, N}=\xi_{0, N}$.
We now record some immediate consequences of the above 
definitions.

\begin{lem} \label{lemma: properties}
When $N > (2M/\delta)^2$, the following statements hold:
\begin{enumerate}
\item 
For all $k$, $0<\widetilde{\Phi} < 3$.
\item 
For all $k$, $0<\tUpsilon < 2$.
\item 
For all $k$, 
$|\tPsi| \le 2M/\delta$.
\item 
For all $\ell, k \ge 0$,
$$
\mathbb{E}\left|\Delta \tb_k \tUpsilon\right|^\ell \le D_\ell
,$$
where $D_\ell$ is a constant depending only on $\ell$.
\end{enumerate}
\end{lem}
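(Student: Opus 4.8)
The plan is to derive all four claims from two elementary, uniform estimates on the denominators appearing in the definitions, after which only Part 4 needs a genuine probabilistic inequality. Throughout I would abbreviate $G = \lfloor N g(\txi_{k,N}^{(1)})\rfloor$ and $H = \sqrt{N}\, h(\txi_{k,N}^{(2)})$, so that $\tPhi = G/(G+H)$, $\widetilde{\beta}_k \sim \Bin(G, 1/(G+H))$, and $\mathbb{E}\widetilde{\beta}_k = \tPhi$. The first step is to record that the hypothesis $N > (2M/\delta)^2$, i.e. $\sqrt N\,\delta > 2M$, forces both relevant denominators to stay bounded away from $0$ and comparable to their leading terms. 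Since $g \ge \delta$ and $|h| \le M$, one has $G \ge N\delta - 1$ and $|H| \le \sqrt N\, M < N\delta/2$, whence $G + H > N\delta/2 - 1 > 1$; likewise $g(\txi_{k,N}^{(1)}) + h(\txi_{k,N}^{(2)})/\sqrt N \ge \delta - M/\sqrt N \ge \delta/2 > 0$. These two inequalities are the only place the lower bound on $N$ enters (note $N\delta > 4M^2/\delta \ge 4$ since $M$ is large and $\delta$ small).

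Given these, Parts 1--3 reduce to arithmetic. For Part 1, positivity of $\tPhi = G/(G+H)$ is clear from $G \ge 1$ and $G + H > 0$; the bound $\tPhi < 3$ is equivalent to $H > -2G/3$, which holds because $H \ge -\sqrt N M$ while $\sqrt N M < N\delta/2 \le 2G/3$ (the last step using $G \ge N\delta - 1$ and $N\delta \ge 4$). For Part 2, since $G + H > 1$ the factor $1 - 1/(G+H)$ lies in $(0,1)$, so $\tUpsilon^2 = \tPhi\,(1 - 1/(G+H)) \in (0, \tPhi) \subset (0,3)$, giving $0 < \tUpsilon < \sqrt 3 < 2$. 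For Part 3, the numerator of $\tPsi$ is at most $M$ in absolute value while the denominator $g(\txi_{k,N}^{(1)}) + h(\txi_{k,N}^{(2)})/\sqrt N$ is at least $\delta/2$, so $|\tPsi| \le M/(\delta/2) = 2M/\delta$.

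The substance is Part 4, for which the key observation is that $\Delta\tb_k\,\tUpsilon = \widetilde{\beta}_k - \tPhi = \widetilde{\beta}_k - \mathbb{E}\widetilde{\beta}_k$ directly from the definition of $\Delta\tb_k$; this sidesteps any need to control $1/\tUpsilon$ and reduces the claim to a uniform bound on the central moments of a binomial whose mean $\tPhi$ is at most $3$ by Part 1. Writing $\widetilde{\beta}_k = \sum_{i=1}^{G} I_i$ as a sum of i.i.d. Bernoulli$(p)$ variables with $p = 1/(G+H)$ and $Gp = \tPhi < 3$, I would apply Rosenthal's inequality conditionally on $\txi_{k,N}$: for $\ell \ge 2$,
\[
\mathbb{E}\bigl[\,|\widetilde{\beta}_k - \tPhi|^\ell \bigm| \txi_{k,N}\bigr] \le C_\ell\Bigl[(Gpq)^{\ell/2} + G\,\mathbb{E}|I_1 - p|^\ell\Bigr], \qquad q = 1-p.
\]
Both bracketed terms are bounded by a constant depending only on $\ell$: $Gpq \le Gp = \tPhi < 3$, and $G\,\mathbb{E}|I_1 - p|^\ell = G(pq^\ell + qp^\ell) \le 2Gp < 6$. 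Taking expectations gives $\mathbb{E}|\Delta\tb_k\tUpsilon|^\ell \le D_\ell$ with $D_\ell$ independent of $k$ and $N$; the cases $0\le\ell\le 2$ follow from the $\ell=2$ bound by Jensen. I expect the only real obstacle here to be invoking a moment inequality whose constant does not depend on $G$ or $p$ — equivalently, the fact that a binomial with bounded mean has uniformly bounded central moments (also visible through the Poisson limit $\Poi(\tPhi)$) — but once the identity $\Delta\tb_k\tUpsilon = \widetilde{\beta}_k - \mathbb{E}\widetilde{\beta}_k$ is in hand, this is entirely standard.
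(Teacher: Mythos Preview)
Your proof is correct and follows essentially the same approach as the paper. The paper dismisses Parts 1--3 as ``clear'' while you supply the arithmetic, and for Part 4 both arguments hinge on the same key identity $\Delta\tb_k\,\tUpsilon = \widetilde{\beta}_k - \mathbb{E}\widetilde{\beta}_k$, reducing the claim to bounding central moments of a binomial with mean below $3$; the only cosmetic difference is that the paper cites a closed-form result of Knoblauch (plus Cauchy--Schwarz for odd $\ell$) where you invoke Rosenthal's inequality.
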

\begin{proof}
Parts 1--3 are clear.  
For part 4, observe that 
$ \mathbb{E}\left(\Delta \tb_k \tUpsilon\right)^\ell$ is the centered moment of a $\Bin(x, p)$ random variable with $xp < 3$.
Then the desired bound follows from Theorem 2.2 in \cite{knoblauch2008closed} for even $\ell$ and from Cauchy-Schwarz for odd $\ell$. 
\end{proof}

We have now arrived at the key result
on the way to proving Theorems~\ref{theorem: main}
and~\ref{theorem:brbr}. As usual, the process 
$\txi_t$ is the piecewise linear 
process on $[0,1]$, with values 
$\txi_{k, N}$ at $k/N$. Furthermore, we define $\teta_{t} = \left(\teta_{t}^{(1)}, \teta_{t}^{(2)}\right)$ to be
\begin{equation} \label{equation: brownian motion}
\begin{cases}

\teta_{t}^{(1)} = 1- t\\
\teta_{t}^{(2)} = \displaystyle p(\sigma) - \int_0^t \frac{h\left(\teta_{s}^{(2)}\right)}{g(1-s)} ds - B_t 
\end{cases},
\end{equation}
for $t\in [0, 1]$, where $p(\sigma) = 1/\sqrt{\sigma}$ if $\sigma$ is even and $p(\sigma) = 0$, otherwise.

\begin{lem} \label{lemma: convergence of process}
As $N \to \infty$, $\txi_{t} \to \teta_t$ in distribution, in $\mathcal C([0,1], \mathbb R^2)$. 
\end{lem}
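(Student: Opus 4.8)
The plan is to apply the weak-convergence criterion of \cite{kushner1974weak}, which gives conditions under which the interpolated solution of a stochastic difference equation converges to a diffusion specified through its infinitesimal drift and covariance. The decisive feature of the two-dimensional formulation is that, since $\teta_t^{(1)}=1-t$ is purely deterministic and tracks time, the limiting dynamics can be written as an \emph{autonomous} diffusion on $\mathbb{R}^2$ with drift $b(x_1,x_2)=\bigl(-1,\,-h(x_2)/g(x_1)\bigr)$ and diffusion matrix $a=\left(\begin{smallmatrix}0&0\\0&1\end{smallmatrix}\right)$. The truncations $g(x)=\max(x,\delta)$ and $h(x)=\min(\max(x,-M),M)$ from \eqref{eq-gh} are precisely what render $b$ bounded and Lipschitz (note $g\ge\delta>0$), so the limiting object is well defined; this is the reason for passing to $\txi_{k,N}$ rather than working with $\xi_{k,N}$ directly.

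First I would compute the one-step conditional moments of the increments in \eqref{eq-difference} given the current state $\txi_{k,N}$. Since $\mathbb{E}[\Delta\tb_k\mid\txi_{k,N}]=0$ (as $\tPhi=\mathbb{E}\widetilde\beta_k$ and $\Delta\tb_k\tUpsilon=\widetilde\beta_k-\tPhi$), the conditional means are $\mathbb{E}[\Delta\txi_{k,N}^{(1)}\mid\txi_{k,N}]=-\tfrac1N+\tfrac1{N^{3/2}}\tPsi$ and $\mathbb{E}[\Delta\txi_{k,N}^{(2)}\mid\txi_{k,N}]=-\tfrac1N\tPsi$, so that $N$ times these converge, uniformly in $k$ and in the state, to $-1$ and to $-\tPsi\to -h(\txi_{k,N}^{(2)})/g(\txi_{k,N}^{(1)})$, using $|\tPsi|\le 2M/\delta$ and the boundedness of $\tUpsilon,\tPhi$ from Lemma~\ref{lemma: properties}. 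For the second-order moments, $\mathrm{Var}(\Delta\tb_k\tUpsilon\mid\txi_{k,N})=\tUpsilon^2$, whence $N\,\mathbb{E}[(\Delta\txi_{k,N}^{(2)})^2\mid\txi_{k,N}]\to\tUpsilon^2\to1$, while $N\,\mathbb{E}[(\Delta\txi_{k,N}^{(1)})^2\mid\txi_{k,N}]=\cO(1/N)\to0$ and the cross term $N\,\mathbb{E}[\Delta\txi_{k,N}^{(1)}\Delta\txi_{k,N}^{(2)}\mid\txi_{k,N}]=\cO(1/\sqrt N)\to0$. These four limits reproduce exactly the coefficients $b$ and $a$; uniform continuity in the state, needed to match Kushner's hypotheses, follows because $g,h$ are Lipschitz and bounded away from the singularity.

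Next I would discharge the remaining hypotheses of \cite{kushner1974weak}. The higher-moment (Lindeberg-type) condition is immediate from Lemma~\ref{lemma: properties}(4): since each increment equals an $\cO(1/N)$ drift plus $\tfrac1{\sqrt N}\Delta\tb_k\tUpsilon$ (resp.\ $\tfrac1N\Delta\tb_k\tUpsilon$), one gets $N\,\mathbb{E}[|\Delta\txi_{k,N}|^4\mid\txi_{k,N}]\le C N^{-1}D_4\to0$, which also supplies the tightness/compact-containment input. Convergence of the initial conditions $\txi_{0,N}=\xi_{0,N}=(Y_0/N,\,Z_0/\sqrt N)$ to $(1,p(\sigma))$ follows from Lemma~\ref{lemma-T} and $N=n^\sigma/\sigma+\cO(n^{\sigma/2})$: for even $\sigma$ one has $Y_0/N\to1$ and $Z_0/\sqrt N\to1/\sqrt\sigma$, while for odd $\sigma$ one has $Z_0/\sqrt N\to0$. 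Finally, well-posedness of the martingale problem for $(b,a)$ holds because the first coordinate is the deterministic $1-t$ and the second solves an SDE whose drift $-h(\cdot)/g(1-t)$ is Lipschitz in the state (again using $g\ge\delta$) and driven by a standard Brownian motion, hence has a unique strong solution, which is exactly $\teta_t$ from \eqref{equation: brownian motion}. Invoking \cite{kushner1974weak} then yields $\txi_t\to\teta_t$ in distribution in $\mathcal{C}([0,1],\mathbb{R}^2)$.

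I expect the main obstacle to be the \emph{uniform} verification of the infinitesimal-moment convergence rather than any single computation: one must control the floor functions and the $1/\sqrt N$ corrections hidden in $\tPhi,\tPsi,\tUpsilon$ so that the drift and covariance limits hold uniformly over all admissible states, and one must confirm that the moment bounds of Lemma~\ref{lemma: properties} genuinely supply the tightness demanded by \cite{kushner1974weak}. The truncations $g,h$ are what make this tractable, and the conceptual crux is checking that the degenerate, seemingly time-inhomogeneous limiting drift is correctly captured by the homogeneous two-dimensional diffusion with coefficients $(b,a)$.
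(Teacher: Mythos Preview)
Your proposal is correct and follows essentially the same route as the paper: both invoke the criterion of \cite{kushner1974weak}, identify the one-step conditional drift and covariance as $e_N=\bigl(-1+\tPsi/\sqrt N,\,-\tPsi\bigr)$ and $s_N s_N^T$ with $s_N=\bigl(\tUpsilon/\sqrt N,\,-\tUpsilon\bigr)$, show these converge (using the bounds of Lemma~\ref{lemma: properties}) to $e=\bigl(-1,\,-h/g\bigr)$ and $s=(0,-1)$, verify the fourth-moment condition from part~4 of that lemma, check the initial condition via Lemma~\ref{lemma-T}, and appeal to Lipschitz well-posedness of \eqref{equation: brownian motion}. The paper organizes the computation into numbered steps matching Kushner's labelled hypotheses (condition~(1) and A1--A6), but the substance is the same as yours.
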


\begin{proof}
 We write
$$\mathbb{E} \left[\Delta \txi_{k, N}  \biggm| \mathcal{F}_{k}\right] = 
e_N\left(\txi_{k, N}\right) \Delta t_k^N,$$
where 
$\mathcal{F}_k$ is the $\sigma$-algebra generated by $\txi_{0, N}, \ldots, \txi_{k, N}$, 
$e_N\left(\txi_{k, N}\right) =
\begin{bmatrix}
-1 + \frac{\widetilde{\Psi}}{\sqrt{N}}\\
-\widetilde{\Psi}
\end{bmatrix}$
and
$\Delta t_k^N = 1/N$. 
Moreover, 
$$\Cov \left[\Delta \txi_{k, N}  \biggm| \mathcal{F}_{k}\right] =
s_N\left(\txi_{k, N}\right)s_N\left(\txi_{k, N}\right)^T  \Delta t_k^N,$$
where $s_N\left(\txi_{k, N}\right) = 
\begin{bmatrix}
\frac{\tUpsilon}{\sqrt{N}}\\
-\tUpsilon
\end{bmatrix}
$ 
and $s_N\left(\txi_{k, N}\right)^T$ is its transpose.
Now, define
$$e\left(\txi_{k, N}\right)= \begin{bmatrix}
-1\\
-\overline{\Psi}
\end{bmatrix},$$
and
$$s\left(\txi_{k, N}\right) = 
\begin{bmatrix}
0\\
-1
\end{bmatrix},
$$
where
$$\overline{\Psi} =  \frac{h\left(\txi_{k, N}^{(2)}\right)}{g\left(\txi_{k, N}^{(1)}\right) }.$$
In the following steps, we suppress the value $\txi_{k, N}$ 
of the independent variables in the functions $e, e_N, s, s_N$.  

\noindent{\it Step 1\/}. Denoting the Euclidean norm by $|\cdot|$, we will verify that
$$
\mathbb{E}\sum_{k = 0}^{N-1} \left[ |e_N - e|^2 + |s_N - s|^2\right] \frac{1}{N} \to 0,
$$
as $N \to \infty$.
We write 
\begin{align*}
\mathbb{E}\sum_{k = 0}^{N-1} |e_N -e|^2 \frac{1}{N} 
& = 
\mathbb{E}\sum_{k = 0}^{N-1}\frac{\widetilde{\Psi}^2}{N^2} + 
\mathbb{E}\sum_{k = 0}^{N-1} \frac{\bigm|\widetilde{\Psi} - \overline{\Psi}\bigm|^2}{N}
\end{align*}
and
\begin{align*}
\mathbb{E}\sum_{k = 0}^{N-1} |s_N - s|^2 \frac{1}{N}
& = 
\mathbb{E}\sum_{k = 0}^{N-1}\frac{\tUpsilon^2}{N^2} + 
\mathbb{E}\sum_{k = 0}^{N-1} \frac{\bigm|1 - \tUpsilon\bigm|^2}{N}.
\end{align*}
In the next fours steps, we show that the four expressions inside the
expectations are bounded by deterministic quantities that go
to $0$. 

\noindent {\it Step 2\/}.
For the first term, 
$$\sum_{k = 0}^{N-1}\frac{\widetilde{\Psi}^2}{N^2} \le \left(\frac{2M}{\delta}\right)^2 \cdot \frac{1}{N},
$$
by Lemma \ref{lemma: properties} part 3.

\noindent {\it Step 3\/}.  
For the second term, the bounds $g\ge \delta$ and $|h|\le M$ imply that, for a large enough $N$
$$\sum_{k = 0}^{N-1} \frac{\bigm|\widetilde{\Psi} - \overline{\Psi}\bigm|^2}{N}=\sum_{k = 0}^{N-1}\biggm| \frac{h^2\left(\txi_{k, N}^{(2)}\right) / \sqrt{N}}{\left(g\left(\txi_{k, N}^{(1)}\right) + h\left(\txi_{k, N}^{(2)}\right)/\sqrt{N}\right)g\left(\txi_{k, N}^{(1)}\right)}\biggm|^2\frac{1}{N}
\le \left(\frac{2M^2}{\delta^2}\right)^2  \cdot \frac{1}{N}.$$

\noindent{\it Step 4\/}. For the third term, by Lemma \ref{lemma: properties}, 
part 2, 
$$\sum_{k = 0}^{N-1} \frac{\tUpsilon^2}{N^2} \le \frac{4}{N}.$$

\noindent{\it Step 5\/}.
For the final term, we have, for large enough $N$, by Lemma \ref{lemma: properties}, 
parts 1 and 2,
\begin{align*}
  \sum_{k = 0}^{N-1} \frac{\bigm|1 - \tUpsilon \bigm|^2} {N}
&\le 
 \sum_{k = 0}^{N-1} \frac{|1 - \tUpsilon^2|}{N}\\
&=   \sum_{k = 0}^{N-1} 
\left|
1 - \widetilde{\Phi}
\left(
1 -  \frac{1}{\lfloor Ng\left(\txi_{k, N}^{(1)}\right)\rfloor + \sqrt{N} h\left(\txi_{k, N}^{(2)}\right)}
\right)
\right| \frac{1}{N}\\
&\le   \sum_{k = 0}^{N-1} \left[|1 - \widetilde{\Phi}|
+\widetilde{\Phi}\cdot \frac{1}{\delta N-1-M\sqrt N}\right] \frac{1}{N}\\
& \le  \sum_{k = 0}^{N - 1}\frac{ |1 - \tPhi|}{N} +\frac{3}{\delta N -1-M\sqrt N} \\
& \le   \sum_{k = 0}^{N-1} \frac{\sqrt{N}\,\bigm| h\left(\txi_{k, N}^{(2)}\right)\bigm|}{\lfloor N g\left(\txi_{k, N}^{(1)}\right)\rfloor + \sqrt{N} h\left(\txi_{k, N}^{(2)}\right) } \cdot \frac{1}{N} +\frac{3}{\delta N -1-M\sqrt N}\\
& \le \frac{2M}{\delta}\cdot \frac{1}{ \sqrt N}+\frac{3}{\delta N -1-M\sqrt N}.
\end{align*}
 
Steps 2--5 establish the claim in Step 1, and thus 
condition (1) in \cite{kushner1974weak}. To finish the 
proof, we also need to verify the conditions A1--A6 in Theorem 9.1 in \cite{kushner1974weak}. 
The conditions A1 and A5 hold trivially, and remaining four are handled in the next four steps.

\noindent{\it Step 6\/}.  For A2, it suffices to observe that $e$ and $s$ are bounded and continuous and $e_N$ and $s_N$ are uniformly bounded on $\mathbb R^2$ (and none of them depend on the time variable).


\noindent{\it Step 7\/}.  For A3, the initial value $\txi_{0, N}$ converges in probability to 
$\begin{bmatrix}
1\\
p(\sigma)
\end{bmatrix}$.

\noindent{\it Step 8\/}. For A4, we show that  
$$\mathbb{E} \sum_{k = 0}^{N-1} \biggm|\Delta\txi_{k, N} - \frac{e_N}{N}\biggm|^{4} \to 0.$$
Indeed, the expectation equals
\begin{align*}
\mathbb{E} \sum_{k = 0}^{N-1} 
\biggm|
\begin{bmatrix}
\frac{2\widetilde{\Psi}}{N^{3/2}} - \frac{\Delta \tb \tUpsilon}{N} \\
\frac{\Delta \tb \tUpsilon}{\sqrt{N}}
\end{bmatrix}
\biggm|^4 
&=
\mathbb{E} \sum_{k = 0}^{N-1}
\biggm[
\left( \frac{2\widetilde{\Psi}}{N^{3/2}} - \frac{\Delta \tb \tUpsilon}{N}  \right)^4 + \\
&\qquad 2\left(\frac{2\widetilde{\Psi}}{N^{3/2}} - \frac{\Delta \tb \tUpsilon}{N}\right)^2
\left( \frac{\Delta \tb \tUpsilon}{\sqrt{N}} \right)^2 + 
\left( \frac{\Delta \tb \tUpsilon}{\sqrt{N}} \right)^4
\biggm]
\end{align*}
and goes to $0$ 
as $N \to \infty$, by Lemma~\ref{lemma: properties}, parts 2, 3, and 4. 


\noindent{\it Step 9\/}. Finally, for A6, we apply the standard theory, e.g., Theorems 2.5 and 2.9 in \cite{karatzas-shreve1970brownian}, to show that equation (\ref{equation: brownian motion}) has a unique
solution.
\end{proof}

We use the notation $\barxi_t$ and $\bareta_t$ for the 
processes resulting from taking $M=\infty$ in 
(\ref{eq-gh}), so that these processes have the same $g$ but 
$h(x) = x$. Recall that $\xi_t$ and $\eta_t$ also have 
$\delta=0$, i.e., $g(x)=\max(x,0)$. 
We now extend Lemma \ref{lemma: convergence of process} to show 
that  
$\barxi_t \to \bareta_t$ in distribution.

\begin{lem} \label{lemma: hxi_t to heta_t}
As $N \to \infty$, $\barxi_t \to \bareta_t$ in distribution.
\end{lem}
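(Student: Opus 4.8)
The plan is to deduce Lemma~\ref{lemma: hxi_t to heta_t} from Lemma~\ref{lemma: convergence of process} by a truncation argument that lets $M \to \infty$, controlling the difference uniformly in $N$. The two processes $\txi_t$ (with cutoff $h(x)=\min(\max(x,-M),M)$) and $\barxi_t$ (with $h(x)=x$) are driven by the same binomial innovations and differ only through the replacement of $h(\txi^{(2)}_{k,N})$ by $\txi^{(2)}_{k,N}$ in the drift and diffusion coefficients. Since both depend on the auxiliary parameter $M$, I would first show that for each fixed $M$, on the event that the second coordinate stays in $[-M,M]$ throughout $[0,1]$, the two processes coincide. The key observation is that $\barxi_t$ and $\bareta_t$ are exactly $\txi_t$ and $\teta_t$ on the sample paths that never leave the strip $|\,\cdot^{(2)}| \le M$; outside this strip they may differ.

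\medskip

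\noindent First I would fix a large $M$ and let $\bareta_t$ be the untruncated limit process solving \eqref{equation: brownian motion} with $h(x)=x$. By standard estimates on the Gaussian-type process $\bareta^{(2)}_t$ (it is a time-changed Ornstein--Uhlenbeck-like bridge with a Brownian driver $B_t$ on the compact interval $[0,1]$), one has
$$
\mathbb{P}\Bigl(\sup_{t\in[0,1]} \bigl|\bareta^{(2)}_t\bigr| > M\Bigr) \to 0 \quad \text{as } M \to \infty.
$$
For any bounded uniformly continuous functional $F$ on $\mathcal{C}([0,1],\mathbb{R}^2)$, I would then estimate $|\mathbb{E} F(\barxi_t) - \mathbb{E} F(\bareta_t)|$ by inserting the truncated processes: write the difference as the sum of $\mathbb{E} F(\barxi_t) - \mathbb{E} F(\txi_t)$, the term $\mathbb{E} F(\txi_t) - \mathbb{E} F(\teta_t)$ which tends to $0$ by Lemma~\ref{lemma: convergence of process}, and $\mathbb{E} F(\teta_t) - \mathbb{E} F(\bareta_t)$. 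The first and last terms are each bounded by $2\|F\|_\infty$ times the probability that the relevant second coordinate exits the strip $[-M,M]$, since the truncated and untruncated processes agree until that exit time.

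\medskip

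\noindent The main obstacle is controlling the exit probability of the prelimit process $\barxi^{(2)}_t$ uniformly in $N$, i.e., showing
$$
\limsup_{N\to\infty} \mathbb{P}\Bigl(\sup_{t\in[0,1]} \bigl|\barxi^{(2)}_t\bigr| > M\Bigr) \to 0 \quad \text{as } M \to \infty.
$$
This does not follow directly from the convergence in Lemma~\ref{lemma: convergence of process}, because that lemma is stated for the $M$-truncated process, and the untruncated prelimit increments lack the uniform bounds of Lemma~\ref{lemma: properties}. I would obtain a uniform maximal estimate for $\barxi^{(2)}_t$ by a direct martingale argument on the difference equation \eqref{equation: SdE}: decompose $Z_k/\sqrt{N}$ into its drift part, which is a predictable sum of terms of order $O(1/N)$ each (bounded because $Y_k/(Y_k+Z_k)\le 1$), and its martingale part, whose quadratic variation is $\sum_k \mathrm{Var}(\beta_k)/N = O(1)$. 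Doob's maximal inequality applied to the martingale part, together with the deterministic bound on the drift, then yields a second-moment bound $\mathbb{E}\sup_{t}|\barxi^{(2)}_t|^2 \le C$ with $C$ independent of both $N$ and $M$; Chebyshev converts this into the required vanishing exit probability. With the three terms controlled, letting $N\to\infty$ and then $M\to\infty$ gives $\mathbb{E} F(\barxi_t)\to\mathbb{E} F(\bareta_t)$, which is the claimed convergence in distribution.
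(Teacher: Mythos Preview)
Your three-term decomposition and the observation that $\barxi_t$ and $\txi_t$ coincide until the second coordinate exits $[-M,M]$ are correct and match the paper's setup. The divergence is in how you handle what you call ``the main obstacle'', namely bounding $\limsup_N \mathbb{P}\bigl(\sup_t|\barxi_t^{(2)}|>M\bigr)$ uniformly in $N$.

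Your martingale argument for this bound has a gap. You claim the drift increments of $Z_k/\sqrt{N}$ are each $O(1/N)$, citing $Y_k/(Y_k+Z_k)\le 1$. But the conditional mean of $\Delta Z_k$ is $Y_k/(Y_k+Z_k)-1=-Z_k/(Y_k+Z_k)\in[-1,0]$, so the drift increment of $Z_k/\sqrt{N}$ is only $O(1/\sqrt{N})$; the cumulative drift over $N$ steps can be of order $\sqrt{N}$, and the asserted $L^2$ bound on the supremum does not follow from this decomposition. (The step is repairable: since $Z_k\ge 0$ and the drift is nonpositive, $Z_k/\sqrt{N}$ is a nonnegative supermartingale, and Doob's inequality gives $\mathbb{P}\bigl(\sup_k Z_k/\sqrt{N}\ge M\bigr)\le \mathbb{E}[Z_0]/(M\sqrt{N})$ directly. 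You would also have to address that $\barxi^{(2)}$ and $Z_k/\sqrt N=\xi^{(2)}$ are not literally the same process, since $\barxi$ still carries the $\delta$-floor in $g$.)

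The paper sidesteps any direct estimate on $\barxi$ altogether. The point you did not exploit is that the coupling does more than make $\barxi$ and $\txi$ agree on the event $\{\sup_t|\barxi_t^{(2)}|\le M\}$: it forces the \emph{exit events themselves} to coincide. Because the two processes are identical up to and including the first step at which either leaves $[-M,M]$, one has
\[
\bigl\{\sup_t|\barxi_t^{(2)}|\ge M/2\bigr\}=\bigl\{\sup_t|\txi_t^{(2)}|\ge M/2\bigr\}.
\]
Consequently the troublesome probability equals $\mathbb{P}\bigl(\sup_t|\txi_t^{(2)}|\ge M/2\bigr)$, whose $\limsup$ over $N$ is at most $\mathbb{P}\bigl(\sup_t|\teta_t^{(2)}|\ge M/2\bigr)<\epsilon$ by Lemma~\ref{lemma: convergence of process} and the choice of $M$. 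No new moment bound is needed; everything is recycled from the already-proved convergence of the truncated process.
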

\begin{proof}
By continuity of $\teta_t$, for any $\epsilon>0$, there exists an $M>0$ such that 
$\mathbb{P}\left(\max|\teta_t^{(2)}| > M/2\right) < \epsilon$. 
Let $\gamma_M:\R\to [0,1]$ be a continuous 
function that vanishes outside $[-M,M]$ and is $1$ on $[-M/2,M/2]$.
For any bounded continuous function $F: \mathcal{C}([0, 1], \mathbb{R}^2)\to \mathbb{R}$,
\begin{align*}
\limsup_{N \to \infty}\mathbb{E} F\left(\barxi_t\right) 
& \le \limsup_{N \to \infty}\mathbb{E} \left[F\left(\barxi_t\right) \cdot  \gamma_M(\barxi_t^{(2)}) \right]
+ \limsup_{N \to \infty}\mathbb{E} \left[F\left(\barxi_t\right) \cdot (1-\gamma_M(\barxi_t^{(2)})) \right]\\
& \le \limsup_{N \to \infty}\mathbb{E} \left[F\left(\txi_t\right) \cdot  \gamma_M(\txi_t^{(2)}) \right]
+ \sup |F|\cdot \limsup_{N \to \infty}\mathbb{P}(\max\bigm|\txi_t^{(2)}\bigm| \ge M/2)\\
& \le \mathbb{E} \left[F\left(\teta_t\right) \cdot  \gamma_M(\teta_t^{(2)}) \right]
+ \sup |F|\cdot\epsilon\\
& \le  \mathbb{E} \left[F\left(\bareta_t\right) \right]
+ 2\sup |F|\cdot\epsilon,
\end{align*}
and a matching lower bound on $\liminf\mathbb{E} F\left(\barxi_t\right)$ is obtained similarly.
\end{proof}

\begin{lem} \label{lemma: continuity of functional}
Let $\delta\in (0, 1)$ be fixed and  
$\mathcal{T}: \mathcal{C}\left([0, 1], \mathbb{R}^2\right) \to [0, 1]$ be defined by
$$
\mathcal{T}(\gamma^{(1)}, \gamma^{(2)} ) = 
\gamma^{(1)}(\min\{1-\delta,\inf\{t:\gamma_t^{(2)}=0\}\}).
$$
Then $\mathcal{T}$ is a.s.~continuous on a path of $\bareta_t$.
As a consequence, $\mathcal{T}(\barxi_t) \to \mathcal{T}(\bareta_t)$ in distribution, as $N\to\infty$.
\end{lem}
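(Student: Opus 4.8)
The assertion has two layers: the almost-sure continuity of $\mathcal{T}$ at a path of $\bareta_t$, and the consequent convergence in distribution. The plan is to deduce the second from the first by the mapping theorem for weak convergence: $\mathcal{T}$ is Borel measurable (hitting times are measurable and the evaluation map is continuous), so once we know that the set $D_{\mathcal{T}}$ of paths at which $\mathcal{T}$ is discontinuous satisfies $\mathbb{P}(\bareta_t \in D_{\mathcal{T}}) = 0$, the convergence $\barxi_t \to \bareta_t$ in distribution from Lemma~\ref{lemma: hxi_t to heta_t} yields $\mathcal{T}(\barxi_t) \to \mathcal{T}(\bareta_t)$ in distribution. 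Hence the entire task is to show that the law of $\bareta_t$ puts no mass on the discontinuities of $\mathcal{T}$.

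First I would reduce the problem to the \emph{capped hitting-time functional} $\mathcal{H}(\gamma) = \min\{1-\delta,\ \inf\{t : \gamma^{(2)}_t = 0\}\}$. If $\gamma_n \to \gamma$ uniformly and $\mathcal{H}(\gamma_n) \to \mathcal{H}(\gamma)$, then, writing $t_n = \mathcal{H}(\gamma_n)$ and $t = \mathcal{H}(\gamma)$, the bound $|\gamma_n^{(1)}(t_n) - \gamma^{(1)}(t)| \le \|\gamma_n^{(1)} - \gamma^{(1)}\|_\infty + |\gamma^{(1)}(t_n) - \gamma^{(1)}(t)|$ together with uniform convergence and the continuity of the limiting first coordinate forces $\mathcal{T}(\gamma_n) \to \mathcal{T}(\gamma)$. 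Thus $\mathcal{T}$ is continuous at every path at which $\mathcal{H}$ is continuous, and it suffices to prove that $\mathcal{H}$ is continuous at $\bareta_t$ almost surely.

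The next step is to characterize the continuity points of $\mathcal{H}$ as the paths that either stay strictly positive up to the cap or cross $0$ transversally. Precisely, $\mathcal{H}$ is continuous at $\gamma$ when either (a) $\min_{[0,1-\delta]} \gamma^{(2)} > 0$, in which case nearby paths keep $\mathcal{H} = 1-\delta$; or (b) $\tau_0 := \inf\{t:\gamma^{(2)}_t=0\} < 1-\delta$ and $\gamma^{(2)}$ assumes strictly negative values in every right-neighborhood of $\tau_0$. In case (b), the bound $\min_{[0,\tau_0-\epsilon]} \gamma^{(2)} > 0$ gives $\liminf_n \tau_0(\gamma_n) \ge \tau_0$, while the immediate dip below $0$ forces any nearby path to meet $0$ just after $\tau_0$, giving $\limsup_n \tau_0(\gamma_n) \le \tau_0$. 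The probabilistic input is that $\bareta_t$ lands in this good set almost surely: on $[0,1-\delta]$ one has $g(1-s) = 1-s$ bounded away from $0$, so $\bareta^{(2)}_t$ is a diffusion with unit (hence nondegenerate) diffusion coefficient driven by $B_t$; by the strong Markov property and the immediate sign changes of Brownian motion, almost surely the process crosses strictly below $0$ at its first zero, which is case (b). For $\sigma$ odd, $p(\sigma)=0$ makes $\bareta^{(2)}$ start at $0$ and take both signs in every right-neighborhood of $0$, so $\tau_0 = 0$ and $\mathcal{T}(\bareta_t) = \bareta^{(1)}_0 = 1$ deterministically, matching the ``in probability'' statement.

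I expect the main obstacle to be the rigorous proof that the degenerate paths form a $\bareta$-null set. Three borderline events must be excluded: a tangential touch of $0$ at $\tau_0$ with no crossing, the coincidence $\{\tau_0 = 1-\delta\}$ at the cap, and $\{\min_{[0,1-\delta]} \bareta^{(2)} = 0\}$. I would dispatch them with the strong Markov property, reducing the local picture at $\tau_0$ to a Brownian motion started at $0$, which becomes strictly negative in every right-neighborhood almost surely, and with the atomlessness of the distributions of the first hitting time of $0$ and of the running minimum of the nondegenerate diffusion on $[0,1-\delta]$. Removing these null sets leaves $\bareta_t$ in the continuity set of $\mathcal{H}$, hence of $\mathcal{T}$, almost surely, completing both the continuity claim and, via the mapping theorem, the convergence in distribution.
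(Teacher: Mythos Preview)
Your proposal is correct and follows essentially the same approach as the paper, which gives only a one-line proof: ``Note that $\bareta_t^{(2)}$ is a Brownian bridge prior to $1-\delta$. Thus the claims follow from well-known facts about the Brownian bridge and standard arguments.'' You supply precisely those standard arguments (continuous mapping theorem via Lemma~\ref{lemma: hxi_t to heta_t}, reduction to continuity of the capped hitting-time functional, and exclusion of the degenerate events by the strong Markov property and nondegeneracy of the diffusion); the only cosmetic difference is that the paper names the process on $[0,1-\delta]$ as a Brownian bridge explicitly, whereas you treat it as a unit-diffusion-coefficient diffusion, which amounts to the same thing.
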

\begin{proof}
Note that 
$\bareta_t^{(2)}$ is a Brownian bridge prior to $1 - \delta$. 
Thus the claims follows from 
well-known facts about the Brownian bridge and standard 
arguments.  
\end{proof}

We can now complete the proof of Theorem~\ref{theorem: 1 minus hitting time}, and thus also Theorem~\ref{theorem:brbr}.

\begin{proof}[Proof of Theorem~\ref{theorem: 1 minus hitting time}.] Fix a $\delta>0$.
By Lemma \ref{lemma: continuity of functional}, $\mathbb{P}\left(\mathcal{T}(\barxi_t) \le x\right) \to \mathbb{P}\left(\mathcal{T}(\bareta_t) \le x\right)$, for all $x \in (0,  1 - \delta)$, as $N \to \infty$.
When $x \in (0, 1 - \delta)$, we also have that $\mathbb{P}\left(\mathcal{T}(\xi_t) \le x\right) = \mathbb{P}\left(\mathcal{T}(\barxi_t)\le x\right)$ and $\mathbb{P}\left(\mathcal{T}(\eta_t) \le x\right) = \mathbb{P}\left(\mathcal{T}(\bareta_t)\le x\right)$.
It follows that $\mathbb{P}\left(\mathcal{T}(\xi_t) \le x\right) \to \mathbb{P}\left(\mathcal{T}(\eta_t) \le x\right)$, for all $x \in (0, 1 - \delta)$.
As $\delta>0$ is arbitrary, the claim follows.
\end{proof}
 
The following proposition proves the distribution of hitting time of Brownian bridge. 

\begin{prop} \label{prop: hitting time distribution}
Fix an $a>0$.
Let $\eta_a$ be the stochastic process satisfying 
$$\eta_a(t) = a - \int_0^t \frac{\eta_a(s)}{1 - s} ds - B_t.$$
Define the hitting time $\tau_a = \inf\{t: \eta_a(t) = 0\}$. Then $\tau_a$ has density
$$
g_{\tau_a}(x) = \frac{a}{\sqrt{2\pi x^3(1-x)}} \exp \left\{- \frac{a^2(1-x)}{2x} \right\}, \quad x \in  (0, 1).
$$
\end{prop}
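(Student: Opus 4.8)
The plan is to recognize $\eta_a$ as a Brownian bridge and then reduce the first-passage problem to the classical one for standard Brownian motion via a deterministic time change. Writing the defining integral equation in differential form, $\eta_a$ solves
\[
d\eta_a(t) = -\frac{\eta_a(t)}{1-t}\,dt - dB_t, \qquad \eta_a(0)=a.
\]
Since $-B$ is itself a standard Brownian motion, this is exactly the linear SDE of a Brownian bridge started at $a$ and pinned at $\eta_a(1)=0$. First I would confirm this identification at the level of laws: $\eta_a$ is Gaussian, being the solution of a linear SDE with deterministic coefficients; its mean $m(t)=a(1-t)$ solves $m'=-m/(1-t)$ with $m(0)=a$; and its covariance is $\mathrm{Cov}(\eta_a(s),\eta_a(t))=s(1-t)$ for $s\le t$, which is unaffected by the pinning drift. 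These are precisely the mean and covariance of a Brownian bridge from $a$ to $0$ on $[0,1]$.

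Next I would invoke the standard representation of such a bridge through a standard Brownian motion $W$:
\[
\eta_a(t) \;\stackrel{d}{=}\; (1-t)\Bigl(a + W\bigl(\tfrac{t}{1-t}\bigr)\Bigr), \qquad t\in[0,1),
\]
where the right-hand side tends to $0$ as $t\uparrow 1$ almost surely, by the strong law for $W$. Because $1-t>0$ on $[0,1)$, the event $\{\eta_a(t)=0\}$ coincides with $\{W(t/(1-t))=-a\}$. The map $s\mapsto t=s/(1+s)$ is an increasing bijection of $[0,\infty)$ onto $[0,1)$, so the first time $\eta_a$ reaches $0$ corresponds exactly to the first time $W$ reaches $-a$: writing $T=\inf\{s:W(s)=-a\}$, one has $\tau_a = T/(1+T)$, and in particular $\tau_a<1$ almost surely since $T<\infty$ almost surely.

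It then remains to combine L\'evy's first-passage density with a change of variables. By reflection symmetry, $T\stackrel{d}{=}\inf\{s:W(s)=a\}$ has the well-known density $f_T(s)=\frac{a}{\sqrt{2\pi s^3}}\exp(-a^2/(2s))$ on $(0,\infty)$. Applying the deterministic transformation $\tau_a=T/(1+T)$, whose inverse is $s=x/(1-x)$ with $ds/dx=(1-x)^{-2}$, gives
\[
g_{\tau_a}(x) = f_T\!\Bigl(\tfrac{x}{1-x}\Bigr)\,\frac{1}{(1-x)^2} = \frac{a}{\sqrt{2\pi x^3(1-x)}}\,\exp\!\Bigl(-\frac{a^2(1-x)}{2x}\Bigr), \qquad x\in(0,1),
\]
which is the claimed formula.

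The computations are routine once the identification is in place, so the only real content is the first step. The main thing to get right is the rigorous justification that $\eta_a$ has the same law as the time-changed Brownian motion in the representation above---in particular that the sign of the driving noise is immaterial and that the boundary behaviour at $t=1$ is handled correctly---together with the observation that the hitting time of $0$ transfers faithfully under the time change. Since that time change is deterministic and strictly increasing, hitting times correspond one-to-one, and no subtlety about the bridge's behaviour near $t=1$ intervenes because $\tau_a<1$ almost surely.
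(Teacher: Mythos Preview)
Your proof is correct and follows essentially the same route as the paper: the paper's proof is a one-line sketch noting that $\eta_a(t)$ has the same law as $a(1-t)+(1-t)B_{t/(1-t)}$, which is exactly your representation $(1-t)(a+W(t/(1-t)))$, and then appeals to the Brownian hitting-time density. You have simply supplied the details the paper omits---the identification of $\eta_a$ as a Brownian bridge via its SDE, the explicit time change $\tau_a=T/(1+T)$, and the change-of-variables computation---so there is nothing to add.
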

\begin{proof} This is well-known and follows from the 
fact that $\eta_a(t)$ has the same distribution as 
$$a(1 - t) + (1 - t)B_{t/(1 - t)},$$
which relates $\tau_a$ to a hitting time for the Brownian 
motion.
\end{proof}

\begin{cor} \label{corollary: limiting distribution}
When $\sigma$ is even, the sequence of random variables 
$\xi_{\tau}^{(1)}$ converges in distribution to 
a random variable with density
$$g_{1- \tau_{1/\sqrt \sigma}}(x)= \frac{1}{\sqrt{2\sigma\pi x(1 - x)^3}} \exp \left\{- \frac{x}{2\sigma(1 - x)} \right\}, \quad x \in (0, 1).$$
\end{cor}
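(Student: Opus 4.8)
The plan is to read the corollary off directly from the two results immediately preceding it, Theorem~\ref{theorem: 1 minus hitting time} and Proposition~\ref{prop: hitting time distribution}, followed by a single change of variables. First I would specialize Theorem~\ref{theorem: 1 minus hitting time} to even $\sigma$. In that case $p(\sigma) = 1/\sqrt{\sigma}$, so the limiting process $\eta$ appearing there is exactly the process $\eta_a$ of Proposition~\ref{prop: hitting time distribution} with $a = 1/\sqrt{\sigma}$. Hence the hitting time $\tau(\eta)$ equals $\tau_{1/\sqrt{\sigma}}$, and Theorem~\ref{theorem: 1 minus hitting time} gives the convergence in distribution
$$\xi_{\tau}^{(1)} \longrightarrow 1 - \tau_{1/\sqrt{\sigma}}, \qquad N \to \infty.$$

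Next I would invoke Proposition~\ref{prop: hitting time distribution} with $a = 1/\sqrt{\sigma}$ to obtain the explicit density of $\tau_{1/\sqrt{\sigma}}$ on $(0,1)$, namely $g_{\tau_a}(x) = \frac{a}{\sqrt{2\pi x^3(1-x)}}\exp\{-a^2(1-x)/(2x)\}$. Since this density is supported on $(0,1)$ and integrates to one, $\tau_{1/\sqrt{\sigma}} \in (0,1)$ almost surely, and therefore $1 - \tau_{1/\sqrt{\sigma}} \in (0,1)$ as well, so the limiting law of $\xi_\tau^{(1)}$ is supported on $(0,1)$ as claimed.

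The only remaining work is the reflection $x \mapsto 1 - x$. If $X = \tau_{1/\sqrt{\sigma}}$ has density $g_{\tau_a}$, then $Y = 1 - X$ has density $g_{1-\tau_a}(x) = g_{\tau_a}(1 - x)$, the map $x \mapsto 1-x$ having Jacobian of absolute value one. Substituting $1 - x$ for the argument and using $a^2 = 1/\sigma$ together with $a/\sqrt{2\pi} = 1/\sqrt{2\sigma\pi}$, I would simplify
$$g_{\tau_a}(1 - x) = \frac{a}{\sqrt{2\pi (1-x)^3 x}}\exp\left\{-\frac{a^2 x}{2(1-x)}\right\} = \frac{1}{\sqrt{2\sigma\pi x(1-x)^3}}\exp\left\{-\frac{x}{2\sigma(1-x)}\right\},$$
which is precisely the asserted formula.

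There is essentially no analytic obstacle here: the corollary is a bookkeeping consequence of the two preceding statements. The only points that warrant care are the identification $a = p(\sigma) = 1/\sqrt{\sigma}$ for even $\sigma$, so that Proposition~\ref{prop: hitting time distribution} applies verbatim, and the verification that the support and the normalizing constant transform correctly under the reflection, so that the resulting expression is again a bona fide probability density on $(0,1)$.
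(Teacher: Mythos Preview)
Your proposal is correct and follows exactly the paper's approach: the paper's proof is the single sentence ``This follows from Theorem~\ref{theorem: 1 minus hitting time} and Proposition~\ref{prop: hitting time distribution},'' and you have simply unpacked that sentence by identifying $a = p(\sigma) = 1/\sqrt{\sigma}$ and carrying out the reflection $x \mapsto 1-x$ explicitly.
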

\begin{proof}
This follows from Theorem \ref{theorem: 1 minus hitting time} and Proposition \ref{prop: hitting time distribution}.
\end{proof}

\begin{figure}[ht!] 
    \centering
    \includegraphics[width = 0.5\textwidth]{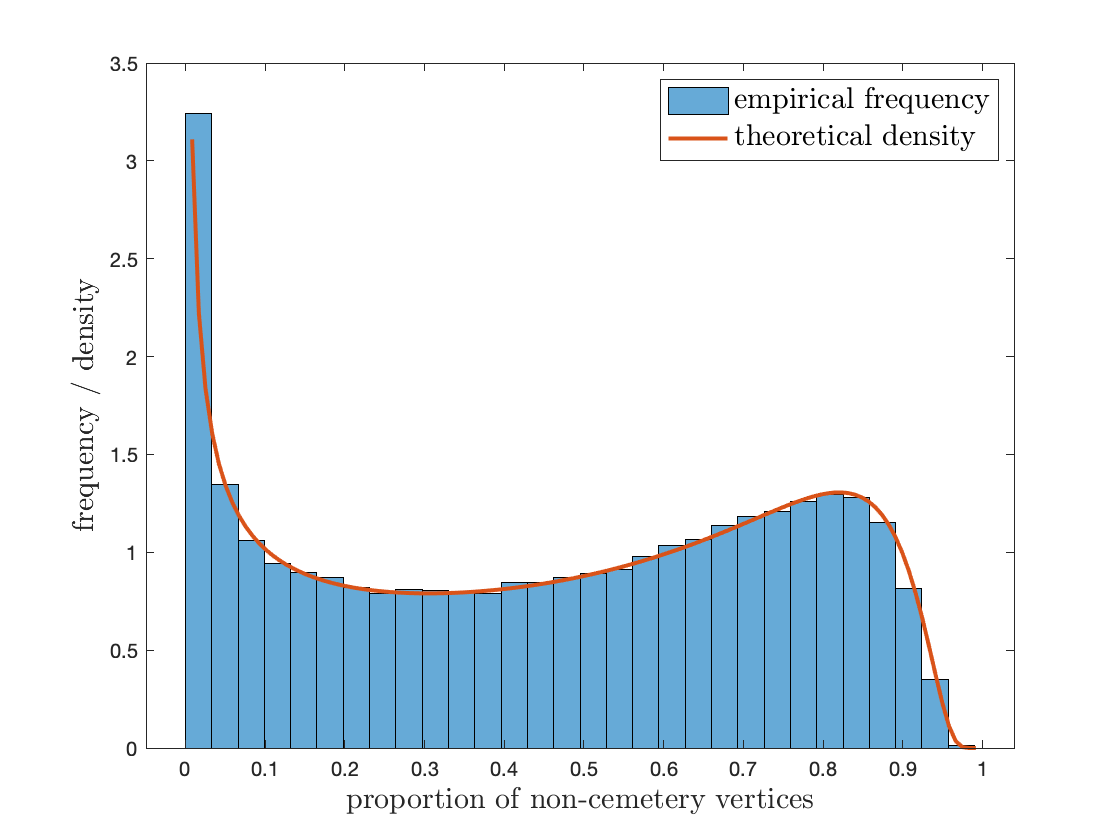}
    \caption{Normalized histogram of proportion of non-cemetery vertices in DEC, together with the theoretical limit density.}
    \label{figure: non-cemetery density for r2s2}
\end{figure}

In Figure~\ref{figure: non-cemetery density for r2s2}, we compare the empirical 
distribution of non-cemetery vertices and its 
limit density given by Corollary~\ref{corollary: limiting distribution}.
In the simulation, we fix $\sigma = r = 2$ and $n = 100$, and randomly generate 10,000 rules.

\subsection{Completion of proof of the main theorem}\label{subsec:conclusion}
%

We now put together the results from Sections~\ref{section: dec and ps}, \ref{section:ran-map}, \ref{section:difference}, and 
\ref{section:conv}.  

\begin{proof}[Proof of Theorem~\ref{theorem: main}]
Recall the geometric random variable $K_\sigma$ from Section~\ref{section: dec and ps}.  
For any $\epsilon > 0$,   pick $k_\epsilon$ large enough such that $\mathbb{P}\left( K_\sigma > k_\epsilon\right) < \epsilon$.
Then we have  
\begin{align*}
\mathbb{P}\left( \frac{X_{\sigma,n}}{n^{\sigma/2}} \le x\right) & =
\mathbb{P}\left(n^{-\sigma/2}\max\left\{L_i\cdot  T_i, i = 1, 2, \dots\right\} \le x\right)\\
& \le \sum_{k = 1}^{k_\epsilon} \mathbb{P}\left(n^{-\sigma/2}\max\left\{L_i \cdot T_i, i = 1, 2, \dots\right\} \le x \bigm| K_\sigma = k\right) \mathbb{P}\left(K_\sigma = k\right) + \epsilon\\
& = \sum_{k = 1}^{k_\epsilon} \mathbb{P}\left(n^{-\sigma/2}\max\left\{L_i \cdot T_i^\prime, L_{k}\sigma, i = 1, 2, \dots, k - 1\right\} \le x \right) \mathbb{P}\left(K_\sigma = k\right) + \epsilon\\
& = \sum_{k = 1}^{k_\epsilon} \mathbb{P} \left( \frac{D_{C_n}^{(k)}}{\sqrt{C_n}} \cdot \sqrt{\frac{C_n}{N}} \cdot \frac{\sqrt{N}}{n^{\sigma/2}} \le x \right)\mathbb{P}\left( K_\sigma = k\right)  + \epsilon,
\end{align*}
where $D$ is defined in Lemma \ref{lemma: convergence of maximum}.
Therefore, it suffices to show that
$$\mathbb{P} \left( \frac{D_{C_n}^{(k)}}{\sqrt{C_n}} \cdot \sqrt{\frac{C_n}{N}} \le x \right)$$
converges as $n \to \infty$, for each fixed $k$. 
To this end, we partition the interval $(0, 1]$ into $M$ 
sub-intervals, and write
\begin{equation}\label{conc-eq1}
\begin{aligned}
&\mathbb{P}\left(\frac{D_{C_n}^{(k)}}{\sqrt{C_n}} \sqrt{\frac{C_n}{N}} \le x\right) \\
& = \sum_{i = 0}^{M - 1}\mathbb{P}\left(\frac{D_{C_n}^{(k)}}{\sqrt{N}} \le x \biggm| \sqrt{\frac{C_n}{N}} \in \left(\frac{i}{M}, \frac{i+1}{M}\right]\right) \mathbb{P} \left(\sqrt{\frac{C_n}{N}} \in \left(\frac{i}{M}, \frac{i + 1}{M}\right]\right).\\
\end{aligned}
\end{equation}
Assume that $\sigma$ is even and let $a=1/\sqrt\sigma$. By Theorem~\ref{theorem:brbr}, 
\begin{equation}\label{conc-eq2}
\mathbb{P} \left(\sqrt{\frac{C_n}{N}} \in \left(\frac{i}{M}, \frac{i + 1}{M}\right]\right) \to \int_{i/M}^{(i+1)/M} g_{\sqrt{1- \tau_a}}(t)\,dt,
\end{equation}
as $n \to \infty$.
Moreover, 
\begin{equation}\label{conc-eq3}
\mathbb{P}\left(\frac{D_{C_n}^{(k)}}{\sqrt{N}} \le x \biggm| \sqrt{\frac{C_n}{N}} \in \left(\frac{i}{M}, \frac{i+1}{M}\right]\right)
\le 
\mathbb{P}\left(\frac{D_{\lfloor i^2N/M^2\rfloor}^{(k)}}{\sqrt{N}i/M} \le \frac{x}{i/M}\right)
.
\end{equation}
It now follows from (\ref{conc-eq1})-(\ref{conc-eq3}) and Lemma~\ref{lemma: convergence of maximum} that
\begin{align*}
\limsup_{n \to \infty} \mathbb{P} \left(\frac{D_{C_n}^{(k)}}{\sqrt{N}} \le x\right) 
& \le \sum_{i = 0}^{M - 1} \mathbb{P}\left( D^{(k)} \le \frac{x}{i/M} \right) \int_{i/M}^{(i+1)/M} g_{\sqrt{1- \tau_a}}(t)\,dt \\
& = \sum_{i = 0}^{M - 1} \left[\mathbb{P}\left( D^{(k)} \le \frac{x}{i/M} \right) \left(g_{\sqrt{1- \tau_a}}\left(\frac{i}{M} \right) \frac{1}{M}
+ \mathcal{O}\left(\frac{1}{M^2}\right)\right)\right]\\
& \to 
\int_0^1 \mathbb{P}\left(D^{(k)}\le \frac{x}{y}\right)g_{\sqrt{1- \tau_a}}(y)\, dy
\end{align*}
as $M \to \infty$, where $g_{\sqrt{1- \tau_a}}$ is the 
density of the random variable $\sqrt{1- \tau_a}$. The 
same lower bound for $\liminf_{n \to \infty} \mathbb{P} \left(\frac{D_{C_n}^{(k)}}{\sqrt{N}} \le x\right) $
is obtained along similar lines. 
For odd $\sigma$, a simpler argument shows that 
 $$\lim_{n \to \infty} \mathbb{P} \left(\frac{D_{C_n}^{(k)}}{\sqrt{N}} \le x\right)=\mathbb{P}\left(D^{(k)} \le x\right),
 $$
and ends the proof.
\end{proof}

\section{Conclusions and open problems} \label{section: conclusions and open problems}

In a CA, finding PS of a given temporal period reduces to 
finding cycles of the corresponding DEC. When a rule 
is chosen at random, the arcs of the DEC 
are independent from each other, provided that 
the spatial period is less than the number of neighbors, i.e., 
if $\sigma \le r$. The problem then reduces to 
finding the longest of the expanded cycles after the cemetery 
vertices have been eliminated.  

\begin{figure}[ht!]%
    \centering
    \subfloat[{{$\sigma = 3$, $r = 2$}}]{{    \includegraphics[width = 0.45\textwidth]{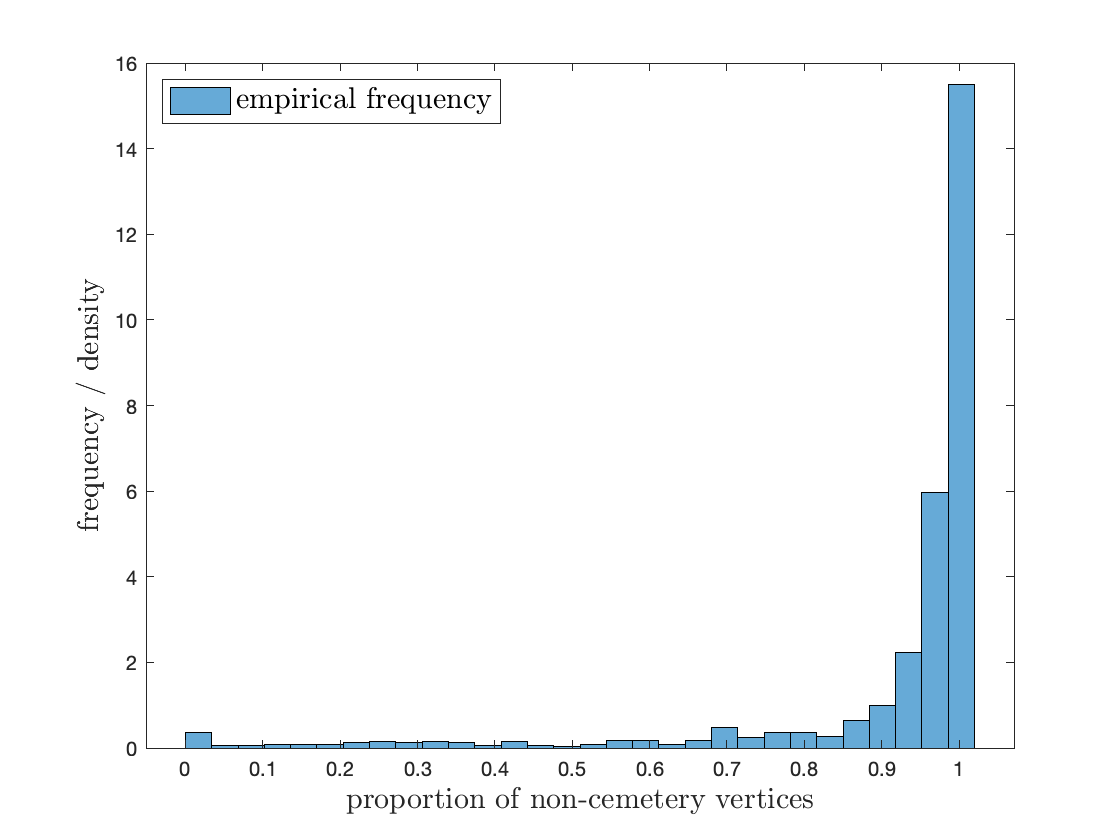} }}
    \hspace{0cm}
    \subfloat[{$\sigma = 4$, $r = 2$}]{{  \includegraphics[width = 0.45\textwidth]{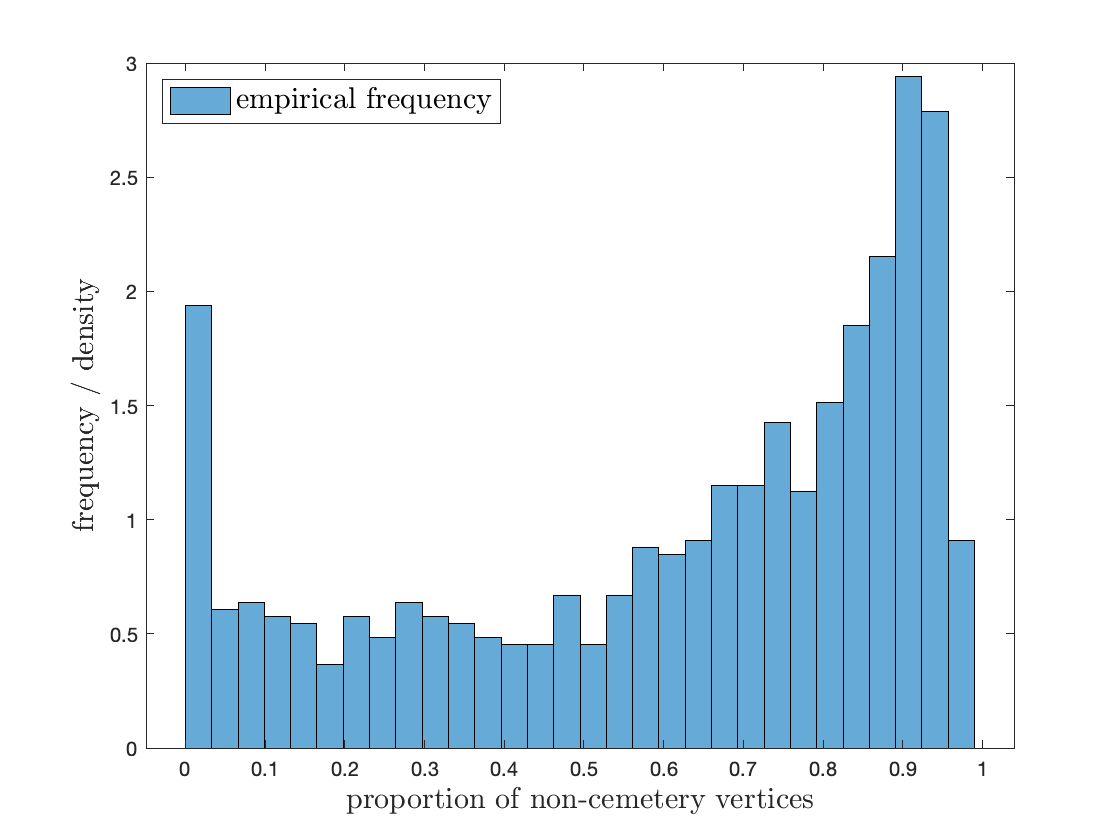} }}%
    \caption{Empirical proportion of non-cemetery vertices for two examples with $r<\sigma$ and
    $n = 50$, from $1000$ samples.}%
    \label{figure:cem}%
\end{figure}

When $\sigma > r$, the independence among arcs in the DEC fails. 
For example, when $r = 2$ and $\sigma = 3$, the events 
$\{123\downto a_1a_2a_3\}$ and $\{124\downto b_1b_2b_3\}$ 
are dependent (as they cannot occur simultaneously 
unless $a_2=b_2$), but they are independent when $r = 3$. 
Even though rigorous analysis seems elusive in this case, 
simulations strongly suggest that results very much like 
Theorems~\ref{theorem: main} and~\ref{theorem:brbr} 
hold. For starters, the random variable $C_n$ and the cemetery vertices in a DEC 
may be defined 
in the same manner, and they have the same connection to 
each other. Figure~\ref{figure:cem} supports the 
following conjecture. 

\begin{con} Fix arbitrary $\sigma, r\ge 1$, 
and let $n\to\infty$.  
If $\sigma$ is odd,  $n^{-\sigma}C_n\to 1$ in probability.
If $\sigma$ is 
even, then  $n^{-\sigma}C_n$ converges in 
distribution to a nontrivial bimodal distribution. 
\end{con}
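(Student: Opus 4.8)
The plan is to follow the architecture of the proof of Theorem~\ref{theorem:brbr}, since the purely combinatorial reductions do not use $\sigma\le r$. First I would check that the DEC formalism of Sections~\ref{section: Directed graph on equivalence classes}--\ref{section: dec and ps} carries over verbatim: the notions of aperiodic/periodic vertex and cemetery/non-cemetery, as well as the identification of $C_n$ with the number of non-cemetery vertices, are all combinatorial and make no reference to the relation between $\sigma$ and $r$. (Only Lemma~\ref{lemma: d-expanded}, which feeds Theorem~\ref{theorem: main} rather than Theorem~\ref{theorem:brbr}, genuinely needs $\sigma\le r$.) I would then try to analyze $C_n$ through a sequential exploration analogous to Algorithm~\ref{algorithm: generating random DEC}, tracking the pair $(Y_k,Z_k)$ of numbers of non-cemetery and active cemetery vertices, and aiming to show that the rescaled process $(Y_k/N, Z_k/\sqrt N)$ converges to a diffusion whose second coordinate is a (generalized) Brownian bridge, with $n^{-\sigma}C_n$ read off from the hitting time of $0$, exactly as in Theorem~\ref{theorem: 1 minus hitting time}.

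The one place the argument breaks is the law of the increments $\beta_k$. For $\sigma\le r$, each length-$r$ window of an aperiodic configuration contains a full spatial period, so inequivalent configurations use disjoint sets of rule inputs; this is precisely what made the arcs independent in Lemma~\ref{lemma: uniform of probability} and turned the exploration into clean independent binomials. For $\sigma>r$ this fails: two configurations that share a length-$r$ window (e.g.\ $123$ and $124$ with $r=2$) have correlated down-extensions, and, more severely, \emph{all} $\sim n^\sigma$ down-extensions are determined by only $n^r$ rule values. The core analytic task is therefore to describe the \emph{conditional} law of $\beta_k$ given the rule values already revealed during the exploration. I would reveal $f$ window-by-window and maintain, as martingale-type bookkeeping, the conditional probability that an as-yet-unexplored non-cemetery vertex maps into the current active vertex. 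The hope is a two-part estimate: (i) pairwise dependences are weak, since two independent configurations share a rule input only with probability $O(n^{-r})$, so over the $O(n^{\sigma/2})$ steps that matter they perturb the drift and quadratic variation by a controllable amount; and (ii) the global constraint of having only $n^r$ free values produces a genuinely random, order-one correction to the effective absorption rate of the periodic subsystem.

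It is precisely through (ii) that I expect the bimodality to enter. In the proof of Theorem~\ref{theorem: 1 minus hitting time} the bridge starts from the \emph{deterministic} height $p(\sigma)=1/\sqrt\sigma$, because the number of period-$\sigma/2$ seeds is a deterministic combinatorial quantity, and the resulting law $1-\tau_{1/\sqrt\sigma}$ is unimodal. When $\sigma>r$, I would try to show that the correlated absorption of configurations by the periodic subsystem makes the effective initial height (equivalently, an order-one coefficient in the limiting SDE) converge to a \emph{nondegenerate} random variable $a$ rather than to a constant. Since $1-\tau_a$ concentrates near $1$ for small $a$ and near $0$ for large $a$ (Proposition~\ref{prop: hitting time distribution}), a mixture of the densities $g_{1-\tau_a}$ over such a random $a$ is naturally bimodal, matching the simulations in Figure~\ref{figure:cem}. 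I would then re-run the scheme of Lemma~\ref{lemma: convergence of process}: verify the conditions of \cite{kushner1974weak} (condition (1) and A1--A6) for the dependent increments, conditionally on the random height $a$, and identify the limit as a Brownian bridge started from $a$.

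The odd case should be much easier and I would dispatch it first as a sanity check: for odd $\sigma$ the number of periodic seeds is $o(n^{\sigma/2})=o(\sqrt N)$, so a first-moment bound on the size of the basin of the periodic subsystem (the expected number of aperiodic vertices whose orbit ever reduces the period) should be $o(n^\sigma)$, and an Azuma-type concentration estimate then yields $n^{-\sigma}C_n\to 1$ in probability. The genuine obstacle is the even case, where the two hard points are (a) controlling the dependence well enough that the Kushner criteria still hold for the rescaled exploration, and (b) proving that the effective initial height is asymptotically random and identifying its law. This last step is where I expect essentially all of the difficulty to reside, and where a new idea beyond the $\sigma\le r$ analysis will be required.
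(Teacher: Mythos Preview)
The statement you are attempting to prove is a \emph{conjecture} in the paper (Conjecture~6.1 in Section~\ref{section: conclusions and open problems}), not a theorem. The paper offers no proof whatsoever; it presents the statement as an open problem supported only by the simulations in Figure~\ref{figure:cem}. There is therefore no ``paper's own proof'' to compare your proposal against.

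Your document is accordingly not a proof but a research plan, and you are candid about this: you identify the genuine obstruction (loss of arc independence in the DEC when $\sigma>r$, since only $n^r$ rule values determine all $\sim n^\sigma$ down-extensions), you propose to salvage the exploration process of Algorithm~\ref{algorithm: generating random DEC} by tracking the conditional law of $\beta_k$ given revealed rule values, and you explicitly flag that step~(b)---identifying the limiting law of the effective initial height---``is where essentially all of the difficulty resides'' and ``a new idea beyond the $\sigma\le r$ analysis will be required.'' That is an accurate self-assessment: nothing in your outline supplies that idea, so the proposal does not close the gap the paper leaves open.

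One specific point to reconsider: your heuristic for bimodality rests on the claim that the $\sigma\le r$ limit $1-\tau_{1/\sqrt\sigma}$ is unimodal, so that bimodality must come from \emph{randomizing} the starting height $a$. But the density in Corollary~\ref{corollary: limiting distribution} already has a singularity at $0$ together with an interior local maximum (differentiate $\log g_{1-\tau_a}$ and check that $4x^2-(5-a^2)x+1=0$ has two roots in $(0,1)$ for $a^2=1/\sigma$), so the deterministic-height case is itself bimodal in the sense the conjecture uses. Your proposed mechanism for bimodality is therefore not needed to explain the phenomenon and may be misdirected; the real question is whether the \emph{same} bridge-hitting-time description survives the dependence, possibly with a modified (but still deterministic) starting height, rather than whether the height becomes random.
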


\begin{figure}[ht!]%
    \centering
    \subfloat[{{$\sigma = 1$, $r = 2$, $5\le n\le 100$}}]{{    \includegraphics[width = 0.45\textwidth]{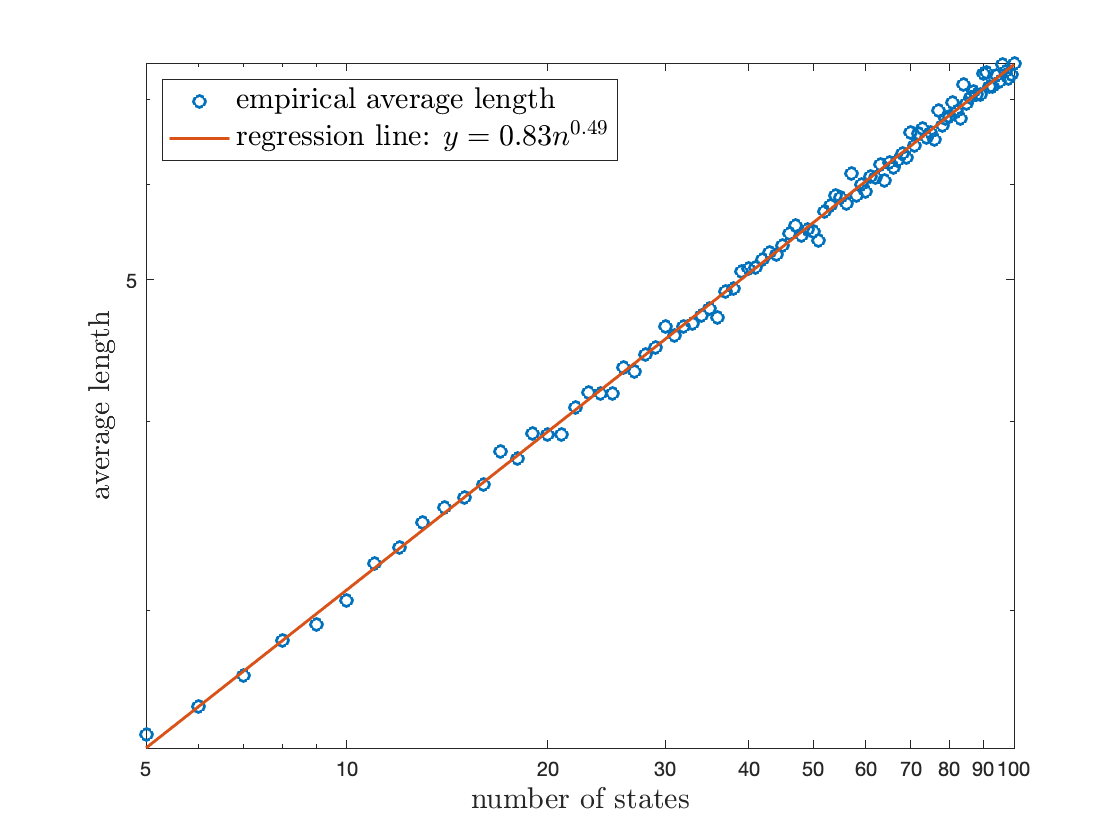} }}
    \hspace{0cm}
    \subfloat[{$\sigma = 2$, $r = 2$, $5\le n\le 100$}]{{  \includegraphics[width = 0.45\textwidth]{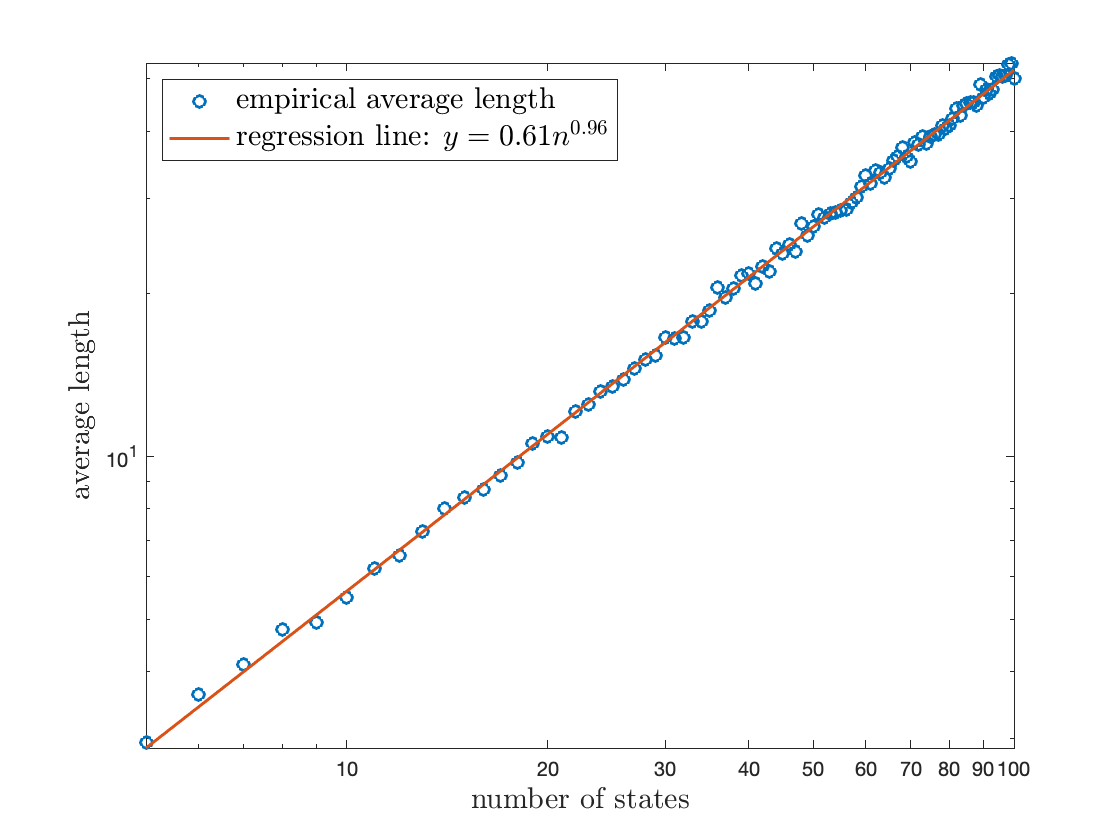} }}\\
        \subfloat[{{$\sigma = 3$, $r = 2$, $5\le n\le 65$}}]{{    \includegraphics[width = 0.45\textwidth]{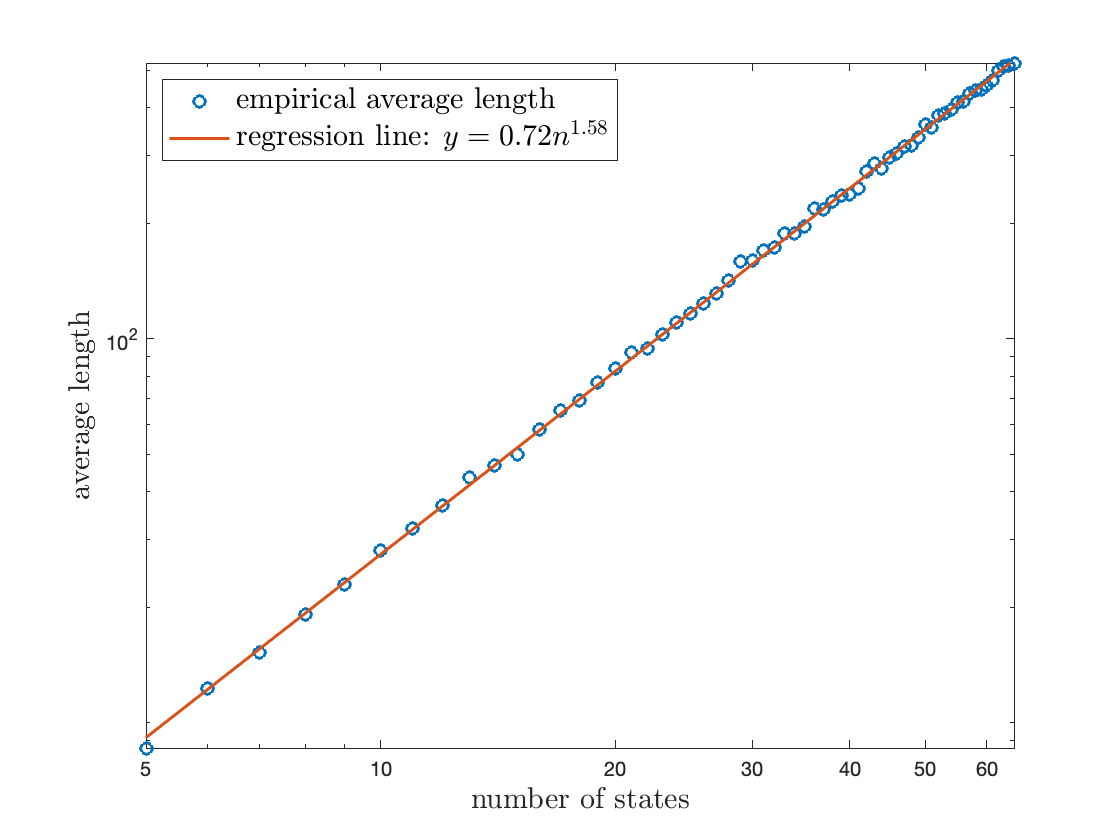} }}
    \hspace{0cm}
    \subfloat[{$\sigma = 4$, $r = 2$, $5\le n\le 36$}]{{  \includegraphics[width = 0.45\textwidth]{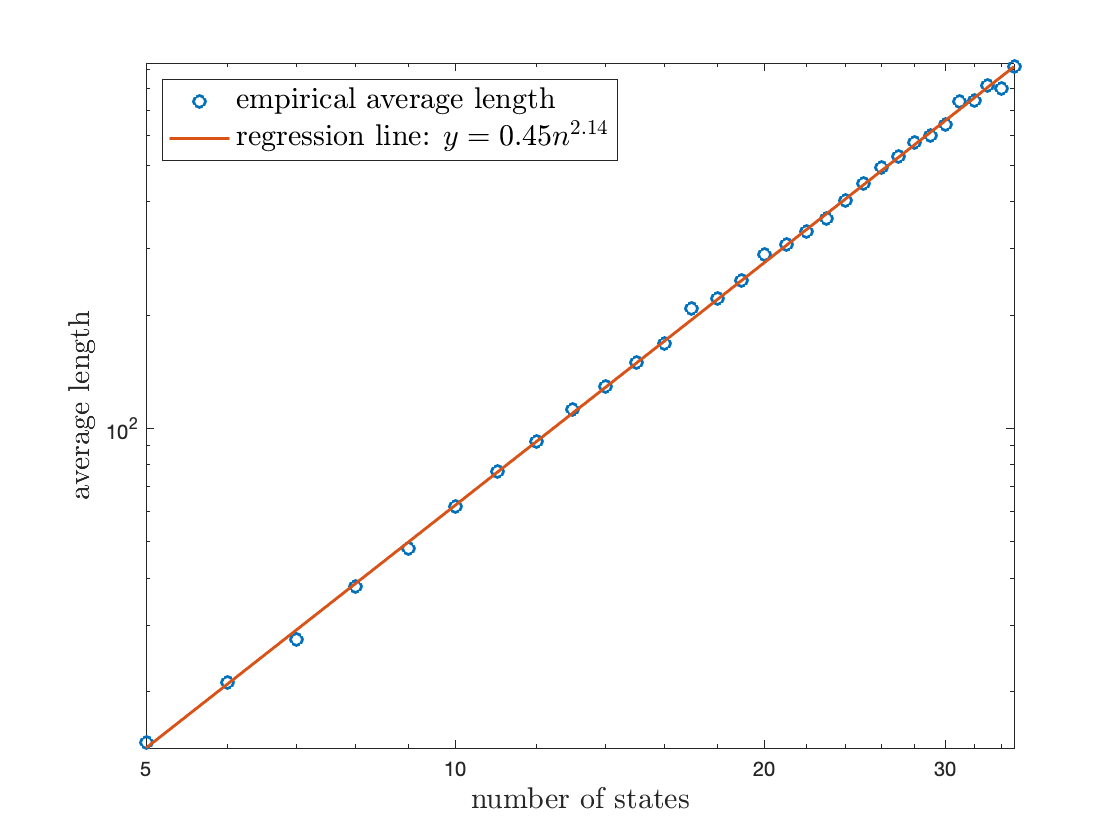} }}\\
     \caption{Loglog plots of average lengths of longest PS
     with varied $\sigma$, from 1000 samples, with corresponding regression lines.}%
    \label{figure:loglog}%
\end{figure}

Turning to the longest periods themselves, we 
provide the loglog plots for $r=2$, and 
$\sigma=1,2,3,4$ in Figure~\ref{figure:loglog}. 
The first two cases are covered by Theorem~\ref{theorem: main}, 
while the other two are not. Nevertheless, the  
average lengths behave with the same regularity, leading to 
our next conjecture.

\begin{con}
Theorem \ref{theorem: main} holds in the same form for $\sigma > r$, i.e., $\displaystyle\frac{X_{\sigma,n}}{n^{\sigma/2}}$ converges in distribution, for any fixed $\sigma \ge 1$ and $r \ge 1$.
\end{con}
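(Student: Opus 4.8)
The plan is to follow the same four-stage architecture as the proof of Theorem~\ref{theorem: main}, transplanting each stage to the regime $\sigma>r$ and isolating the one ingredient—independence of the arcs—that ceases to hold. Recall that the $\sigma\le r$ argument rested on two probabilistic pillars. First, after the sequential construction of Algorithm~\ref{algorithm: generating random DEC}, the number of non-cemetery vertices $C_n$, suitably scaled, converges to a Brownian-bridge hitting-time functional (Theorem~\ref{theorem:brbr}); this used Lemma~\ref{lemma: uniform of probability} and the exact $\mathrm{Binomial}$ increments it produces. Second, the restriction of the DEC to its non-cemetery vertices is in distribution a uniform random mapping, whose cycle lengths scale like $\sqrt{C_n}$ by Theorem~\ref{theorem: joint distribution of random mapping}. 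Both pillars ultimately required that distinct aperiodic equivalence classes read \emph{disjoint} sets of rule entries, which is exactly what forces $\sigma\le r$. I would therefore first re-examine the construction and show that, for $\sigma>r$, each increment $\beta_k$ is no longer exactly Binomial but deviates from the Binomial law only through rule entries already revealed.

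The first concrete step is to quantify this dependence. For $\sigma>r$ a length-$r$ window no longer determines the configuration, so configurations in different classes may share a window; revealing one arc fixes the $\sigma$ rule entries indexed by its windows, and each such entry is shared by on the order of $n^{\sigma-r}$ configurations. I would track the set $W_k$ of rule entries revealed through step $k$ and show that, conditioned on $\mathcal{F}_k$, the probability that a still-unclassified vertex maps into the current active vertex equals $1/(Y_k+Z_k)$ up to a correction governed solely by whether one of that vertex's $\sigma$ windows already lies in $W_k$. The aim is a coupling between the true construction and the idealized Binomial construction of Algorithm~\ref{algorithm: generating random DEC}, with discrepancy controlled by the expected number of \emph{collision} vertices—those possessing a previously revealed window—so that if this number is $o(N)$ on the relevant time scale, Lemma~\ref{lemma: convergence of process} and hence the analogue of Theorem~\ref{theorem:brbr} persist.

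With $C_n$ under control, the second step is to re-establish random-mapping behavior of the non-cemetery cycle lengths. Here I would argue locally: a cycle of length $\ell$ reveals only $O(\ell)$ rule entries, and since the longest cycles have length $\ell\sim n^{\sigma/2}$, they expose $O(n^{\sigma/2})$ entries out of the $n^{r}$ available. When $\sigma<2r$ this is a vanishing fraction, so along a typical cycle the windows encountered are fresh with high probability and the cycle-building exploration couples to the uniform random-mapping exploration. I would make this precise with a Chen--Stein method-of-moments computation showing that the number of cycles of each fixed scaled length has the same Poisson limit as in the independent model, with correlation corrections bounded by the collision density. The expanding-number factors then enter through circular-shift orders whose marginal law is fixed by the purely combinatorial Lemma~\ref{lemma: euler totient}, so that the geometric variable $K_\sigma$ and the limit $D^{(k)}$ survive, provided the same collision control also yields approximate independence of these orders across distinct cycles, as in Lemma~\ref{lemma: d-expanded}.

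The main obstacle is that for $\sigma>r$ the sharing of rule entries is genuinely dense rather than sparse: there are only $n^r$ distinct windows but $n^\sigma$ configurations, so over the \emph{full} cemetery construction essentially every window is touched many times, and the global $o(N)$ collision bound demanded above is simply false. The crux will therefore be to show that these collisions, though numerous, are asymptotically irrelevant to the cycle statistics—for instance by proving that the configurations lying on the long cycles, and the active vertices selected before the process empties, typically read windows that remain fresh, so that the dependence concentrates on cemetery bookkeeping that does not perturb the scaling limit. Establishing such a separation between the bulk correlations and the cycle-relevant randomness, presumably via a more delicate on-demand exploration together with a sharp second-moment estimate on collision counts restricted to the cyclic part (and handling $\sigma\ge 2r$, where even a single longest cycle saturates a positive fraction of the windows), is where the real work lies and is precisely what places $\sigma>r$ beyond the reach of the present methods.
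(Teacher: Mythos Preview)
The statement you are attempting to prove is a \emph{conjecture} in the paper, not a theorem: the authors offer no proof and present only the simulation evidence of Figure~\ref{figure:loglog} in its support. There is therefore no paper proof to compare your proposal against.

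Your proposal is also not a proof. You correctly identify the decisive obstruction---that for $\sigma>r$ distinct equivalence classes share length-$r$ windows, so the arc assignments in the DEC are no longer independent and Lemma~\ref{lemma: uniform of probability} and Lemma~\ref{lemma: d-expanded} both fail---and you sketch a plausible attack via coupling to the idealized Binomial construction and a Chen--Stein argument for the cycle statistics. But you then observe, accurately, that the collision bound your coupling would need is ``simply false'' globally, and you close by stating that isolating the cycle-relevant randomness from the bulk dependence ``is where the real work lies and is precisely what places $\sigma>r$ beyond the reach of the present methods.'' That final sentence is the honest conclusion: your proposal is a diagnosis of the difficulty, not a resolution of it, and it lands in the same place the paper does---the statement remains open.

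If you wish to push further, the concrete gap to close is a replacement for Lemma~\ref{lemma: uniform of probability} that controls, uniformly in $k$, the conditional law of the out-arc of a freshly exposed vertex given $\mathcal{F}_k$; your own remark that every window is touched many times over the full construction shows that a naive total-variation bound cannot work, so any successful argument will need a different conditioning scheme (for instance, revealing rule entries only along the cycle exploration rather than along the cemetery construction) together with a proof that the expanding numbers of distinct cycles remain asymptotically independent despite shared windows. None of that is supplied here.
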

 
Returning to the case $\sigma\le r$, 
one may ask whether our results can be extended to cover 
other than longest periods. Indeed, as 
we now sketch, it is possible to show that 
the length of the $j$th longest PS of a random rule, 
again scaled by $n^{\sigma/2}$  converges in 
distribution. To be more precise, recalling notation from 
Section~\ref{section: dec and ps}, identify recursively for $\ell\ge 1$ the cycles with largest possible expansion numbers
as follows:
$K_\sigma^{(0)} = 0$ and
$$K_\sigma^{(\ell)} = \min\left\{k > K_\sigma^{(\ell - 1)}: T_k =\sigma \right\}.$$
Then the length of  $j$th longest PS is given by
$$X_{\sigma,n}^{(j)} = \max_{(j)} \left\{L_i\cdot T_i^\prime, L_{K_\sigma^{(\ell)}}\sigma: i=1,2,\dots, K_\sigma^{(j)} - 1, i \neq K_\sigma^{(\ell)}, \ell = 1, \dots, j \right\},$$
where $\max_{(j)}$ returns the $j$th largest element of a set.
The arguments similar to those in Sections~\ref{section:ran-map} and~\ref{subsec:conclusion}, then show that $X_{\sigma,n}^{(j)}/n^{\sigma/2}$ converges in distribution to a nontrivial limit.

We conclude with four questions on the extensions of our results in different directions, some of which are analogous to the 
those posed in \cite{gl1}. 

\begin{ques}  Assume that $n$ is fixed, 
but $\sigma,r\to\infty$.  What is the asymptotic behavior of 
the longest temporal period with spatial period $\sigma$, depending on the relative sizes of $\sigma$ and $r$? 
\end{ques}

\begin{ques} For a fixed $\tau$, define the random 
variable $X_{\tau, n}'$ to be the longest spatial 
period of a PS with for a given temporal period $\tau$. 
What is the asymptotic behavior, as $n\to\infty$, of $X_{\tau,n}'$?
\end{ques}

A rule is \textbf{left permutative} if the map 
$\psi_{b_{-r+1},\ldots,b_{-1}}:\Z_n\to\Z_n$ given by 
$\psi_{b_{-r+1},\ldots,b_{-1}}(a)=f(b_{-r+1},\ldots,b_{-1},a)$ is a permutation for every $(b_{-r+1},\ldots,b_{-1})\in \Z_n^{r-1}$. 
 
\begin{ques} Let $\mathcal L$ be the set of all $(n!)^{n^{r-1}}$ left permutative rules. What is the asymptotic behavior of $X_{\sigma,n}$
if a rule from $\mathcal L$ is chosen uniformly at random?  
\end{ques}

Our final question is on additive rules~\cite{martin1984algebraic}, given by $f(b_{-r+1},\ldots,b_{0})=\sum_{i=-r+1}^0\beta_ib_i$, for some $\beta_i\in\Z_n$. 

\begin{ques} Let $\mathcal A$ be the set of all $n^r$ additive rules. What is the asymptotic behavior of $X_{\sigma,n}$
if a rule from $\mathcal A$ is chosen uniformly at random?
\end{ques}

\section*{Acknowledgements}
Both authors were partially supported by the NSF grant DMS-1513340.
JG was also supported in part by the Slovenian Research Agency (research program P1-0285).
 
\bibliography{references}

\end{document}